\newtheorem{theorem}{Theorem}[section]
\newtheorem{lemma}[theorem]{Lemma}
\newtheorem{proposition}[theorem]{Proposition}
\newtheorem{definition}[theorem]{Definition}
\newtheorem{conjecture}[theorem]{Conjecture}
\theoremstyle{definition}
\newtheorem{remark}[theorem]{Remark}
\newtheorem{example}[theorem]{Example}
\newcommand{\xysquare}[8]{
\[\xymatrix{
#1 \ar@{#5}[r] \ar@{#6}[d] & #2 \ar@{#7}[d]\\
#3 \ar@{#8}[r] & #4
}\]
}
\newcommand{\bb}{\mathbb}
\newcommand{\comment}[1]{}
\newcommand{\op}{\operatorname}
\renewcommand{\phi}{\varphi}
\newcommand{\roi}{\mathcal{O}}
\newcommand{\sub}[1]{{\mbox{\rm \scriptsize #1}}}
\renewcommand{\hat}{\widehat}
\renewcommand{\tilde}{\widetilde}
\newcommand{\dR}{{\mathrm{dR}}}
\renewcommand{\inf}{{\mathrm{inf}}}
\DeclareMathOperator{\Hom}{Hom}
\DeclareMathOperator{\Spec}{Spec}
\DeclareMathOperator{\Spf}{Spf}
\DeclareMathOperator{\Tor}{Tor}
\newenvironment{altenumerate}
    {\begin{list}
       {(\theenumi) }
       {\usecounter{enumi}
        \setlength{\labelwidth}{-2pt}
        \setlength{\labelsep}{2pt}
        \setlength{\leftmargin}{0pt}
        \setlength{\itemsep}{\the\smallskipamount}
        \renewcommand{\theenumi}{\roman{enumi}}
       }} {\end{list}}
\begin{document}

\title{Canonical $q$-deformations in arithmetic geometry}

\author{P.~Scholze}

\begin{abstract} In recent work with Bhatt and Morrow, we defined a new integral $p$-adic cohomology theory interpolating between \'etale and de~Rham cohomology. An unexpected feature of this cohomology is that in coordinates, it can be computed by a $q$-deformation of the de~Rham complex, which is thus canonical, at least in the derived category. In this short survey, we try to explain what we know about this phenomenon, and what can be conjectured to hold.
\end{abstract}

%\classno{}
\date{\today}
%\extraline{}

\maketitle

\tableofcontents

\setcounter{section}{0}

\section{Introduction}

Many of the most basic concepts in mathematics have so-called $q$-analogues, where $q$ is a formal variable; specializing to $q=1$ recovers the usual concept. This starts with Gau\ss's $q$-analogue
\[
[n]_q := 1+q+\ldots+q^{n-1} = \frac{q^n-1}{q-1}
\]
of an integer $n\geq 0$. One can form the $q$-factorial and $q$-binomial coefficient
\[
[n]_q! := \prod_{i=1}^n [i]_q\ ,\ {n\choose k}_q := \frac{[n]_q!}{[k]_q! [n-k]_q!}\ .
\]
One interpretation of these numbers is that if $q$ is a power of a prime, and $\bb F_q$ the corresponding finite field, then ${n\choose k}_q$ is the number of $k$-dimensional subvectorspaces of the $n$-dimensional vector space $\bb F_q^n$. As such, $q$-analogues appear naturally in counting problems over finite fields. They also appear in combinatorics, in the theory of quantum groups, in formulas for modular forms, and a whole variety of other contexts which we do not attempt to survey.

There is also a $q$-analogue of the derivative, known as the Jackson $q$-derivative, \cite{jackson}. The $q$-derivative of the function $x\mapsto x^n$ is not $nx^{n-1}$, but $[n]_q x^{n-1}$. In general, the $q$-derivative of $f(x)$ is given by
\[
(\nabla_q f)(x) = \frac{f(qx)-f(x)}{qx-x}\ .
\]
Taking the limit as $q\to 1$ recovers the usual derivative. Thus, the $q$-derivative is a finite difference quotient, and the theory of $q$-derivatives is tied closely with the theory of difference equations.

However, one important property is lost when taking a $q$-analogue: Invariance under coordinate transformations. Namely, the formula for the $q$-derivative shows that its value at $x$ depends on the values of $f$ at the two different points $x$ and $qx$ (which may be quite far from $x$). Perhaps more to the point, we do not see any possible $q$-analogue of the chain rule.

The theme of this paper is that in arithmetic situations, a shadow of this $q$-deformation should be independent of coordinates, which seems to be a new phenomenon. Throughout the paper, we have to assume that the variable $q$ is infinitesimally close to $1$; technically, we take the $(q-1)$-adic completion of all intervening objects. It is a very interesting question whether this is really necessary; all the explicit coordinate-independent formulas that we can write down do not involve the $(q-1)$-adic completion. This may, however, be a reflection of our inability to make any nontrivial coordinate-independent computation.

At first sight, assuming $q$ infinitesimally close to $1$ seems to be almost equivalent to taking the limit as $q\to 1$. Indeed, if $f(x)\in \bb R[x]$ is a real polynomial, then there is a formula for the $q$-derivative as a Taylor series expansion in $q-1$ in terms of the higher derivatives of $f$. However, this formula involves denominators, and works only over $\bb Q$-algebras. Thus, the problem becomes potentially interesting if one works over $\bb Z$, which we will do in the paper.

The basic conjecture is the following:

\begin{conjecture}\label{conjintro} Let $R$ be a smooth $\bb Z$-algebra. There is a functorial $q$-de~Rham complex $q\!\op-\!\Omega_R$ of $\bb Z[[q-1]]$-modules which can be described explicitly (in the derived category) once one fixes coordinates, i.e.~an \'etale map $\bb Z[T_1,\ldots,T_d]\to R$. For example, if $R=\bb Z[x]$, then $q\!\op-\!\Omega_R$ is computed by
\[
\bb Z[x][[q-1]]\buildrel{\nabla_q}\over\longrightarrow \bb Z[x][[q-1]]\ .
\]
\end{conjecture}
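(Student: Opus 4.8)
The plan is to split the conjecture into a coordinate-dependent \emph{construction} and a coordinate-\emph{independence} statement. For the construction, fix an \'etale map $\square\colon\bb Z[T_1,\ldots,T_d]\to R$. Since $q=1+(q-1)$ is a unit in $\bb Z[[q-1]]$, the substitution $T_i\mapsto qT_i$ defines an automorphism of $\bb Z[[q-1]][T_1,\ldots,T_d]$, and as \'etale morphisms lift uniquely along the $(q-1)$-adically nilpotent thickenings $\bb Z\gets\bb Z[[q-1]]/(q-1)^n$, it lifts uniquely to an automorphism $\g_i$ of $R[[q-1]]$ reducing to the identity modulo $q-1$; hence $\nabla_{q,i}:=\tfrac{\g_i-1}{q-1}$ is a well-defined operator on the $(q-1)$-torsion-free module $R[[q-1]]$, and the $\nabla_{q,i}$ commute. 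Let $q\!\op-\!\Omega_{R/\square}$ be the $(q-1)$-completed Koszul complex $\op{Kos}\bigl(R[[q-1]];\nabla_{q,1},\ldots,\nabla_{q,d}\bigr)$ on these operators; this is the evident $q$-deformation of the algebraic de~Rham complex $\Omega^\bullet_{R/\bb Z}$, and for $R=\bb Z[x]$ with the standard coordinate it recovers the two-term complex $\bb Z[x][[q-1]]\xrightarrow{\nabla_q}\bb Z[x][[q-1]]$ of the conjecture. It is visibly functorial in the pair $(R,\square)$, so the content of the conjecture is that, in the derived category, it does not depend on $\square$: one must produce quasi-isomorphisms $q\!\op-\!\Omega_{R/\square}\isoto q\!\op-\!\Omega_{R/\square'}$ that are coherent enough (a cocycle condition up to coherent homotopy) to descend along the forgetful functor $(R,\square)\mapsto R$. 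By \'etale base change and d\'evissage this reduces to comparing two \'etale coordinates on a polynomial ring, e.g.\ $x$ against $x+1$ on $\bb Z[x]$, where there is no evident map of complexes in either direction.

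The most structural route to coordinate-independence would be to realize $q\!\op-\!\Omega_R$ as the cohomology of a site defined without coordinates. Concretely, one would set up a ``$q$-crystalline'' (or ``$q$-PD'') site of thickenings of $\Spec R$ relative to $\bb Z[[q-1]]$ equipped with a $q$-analogue of divided powers (and possibly a Frobenius lift), form the resulting cohomology $R\Gamma_{q\text{-}\mathrm{crys}}(R)$ --- coordinate-free by construction and functorial in $R$ --- and then prove a \emph{$q$-Poincar\'e lemma} identifying $R\Gamma_{q\text{-}\mathrm{crys}}(R)$ with $q\!\op-\!\Omega_{R/\square}$ for every choice of $\square$. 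This is the exact analogue of the way ordinary algebraic de~Rham cohomology is seen to be coordinate-independent via crystalline cohomology.

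At present this program can be carried through after $p$-adic completion, for each prime $p$, using the $A_{\inf}$-cohomology theory from the joint work with Bhatt and Morrow: after the base change $\bb Z_p[[q-1]]\to A_{\inf}$, $q\mapsto[\underline\ep]$, the complex $q\!\op-\!\Omega_{R/\square}$ is identified with $A\Omega_{\frak X}=L\eta_\mu R\nu_*\widehat{\roi}$ for a smooth formal $\roi_C$-model $\frak X$ built from the $p$-adic completion of $R$, and $A\Omega_{\frak X}$ is manifestly independent of $\square$ and functorial. One then has to descend this identification from $A_{\inf}$ back to $\bb Z_p[[q-1]]$; this can be arranged using the Frobenius and Galois structures carried by $A_{\inf}$-cohomology, and it yields the conjecture after $p$-adic completion. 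At the other extreme, after inverting all primes the Taylor expansion $\nabla_{q,i}=\partial_{T_i}+\tfrac{q-1}{2}(\cdots)+\cdots$ exhibits $q\!\op-\!\Omega_R\otimes\bb Q$ as a $(q-1)$-completed deformation of $\Omega^\bullet_{R_{\bb Q}/\bb Q}$ which, by an invertible-gauge-transformation argument reducing to the torus, is canonically quasi-isomorphic to $\Omega^\bullet_{R_{\bb Q}/\bb Q}\otimes_{\bb Q}\bb Q[[q-1]]$ independently of $\square$; this gives the conjecture rationally.

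The main obstacle is to fuse the rational statement and the $p$-adic ones, for all $p$, into a single integral statement over $\bb Z[[q-1]]$ --- equivalently, to make the ``$q$-crystalline'' site function integrally rather than $p$-adically. Here one runs squarely into the phenomenon flagged in the introduction: there is no $q$-chain rule, hence no direct homotopy between $q\!\op-\!\Omega_{R/\square}$ and $q\!\op-\!\Omega_{R/\square'}$, only zig-zags depending a priori on auxiliary choices; and the $(q-1)$-adic completion built into everything obstructs a naive arithmetic fracture square recovering $\bb Z[[q-1]]$-module data from its rational and $p$-adic parts. A concrete line of attack would be deformation-theoretic: the space of flat $\bb Z[[q-1]]$-lifts of $\Omega^\bullet_{R/\bb Z}$ carrying the required functoriality is controlled by Andr\'e--Quillen/Hochschild-type cohomology of $R$ over $\bb Z$, and one would try to prove this space contractible by a computation into which the $p$-adic and rational trivializations enter as the boundary data to be patched. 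That this last step is currently out of reach is precisely why the statement is formulated as a conjecture.
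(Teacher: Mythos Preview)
This statement is a conjecture, and the paper does not prove it; there is no proof to compare against. Your construction of $q\!\op-\!\Omega_{R/\square}$ matches the paper's (Section~\ref{sec:qdeRham}), your rational trivialization is Lemma~\ref{compqdRdR}, your $A_\inf$ input is Theorem~\ref{thmA}, and your closing sentence correctly records that the integral statement remains open. To that extent your sketch is an accurate summary of the paper's own discussion.

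There are, however, two places where your sketch asserts more than is established, and these are genuine gaps rather than routine steps. First, you claim that the $A_\inf$-identification ``can be arranged'' to descend to $\bb Z_p[[q-1]]$ via Frobenius and Galois structures, ``yield[ing] the conjecture after $p$-adic completion''. The paper makes no such claim: Theorem~\ref{thmA} gives coordinate-independence only after the (faithfully flat) base change $\bb Z_p[[q-1]]\to A_\inf$, and the quasi-isomorphisms produced by \cite{BMS} are not shown to carry the descent data needed to come from $\bb Z_p[[q-1]]$; the compatibility with a putative $q\!\op-\!\Omega_R$ over $\bb Z_p[[q-1]]$ is itself recorded only as Conjecture~\ref{ConjC}. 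Second, you propose to realize $q\!\op-\!\Omega_R$ as cohomology of a ``$q$-crystalline site'' and then prove a $q$-Poincar\'e lemma. This is precisely the structure the paper says it is missing: the introduction states explicitly that ``we do not know an analogue of either the de~Rham--Witt complex or the crystalline site'' in this context, and Section~\ref{sec:examples} emphasizes that even the comparison for $\bb Z[x]$ with frames $x$ and $x+1$ cannot currently be made explicit. Your proposal has the right shape for what one would want, but within the paper it is a desideratum, not an available tool.
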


The $q$-de~Rham complex was first studied by Aomoto, \cite{aomoto}, and its cohomology is sometimes called Aomoto--Jackson cohomology.

We refer to Section~\ref{sec:qdeRham} for more details on Conjecture~\ref{conjintro}. It is important to note that we do not expect the complex itself to be independent of coordinates; it should only be independent up to (canonical) quasi-isomorphism. Even in this simplest example $R=\bb Z[x]$, we are unable to write down explicitly the expected quasi-isomorphism between the $q$-de~Rham complexes
\[
\bb Z[x][[q-1]]\buildrel{\nabla_q}\over\longrightarrow \bb Z[x][[q-1]]
\]
and
\[
\bb Z[y][[q-1]]\buildrel{\nabla_q}\over\longrightarrow \bb Z[y][[q-1]]
\]
for an isomorphism $\bb Z[x]\cong \bb Z[y]$, like $y=x+1$.

Assuming the conjecture, one can define $q$-de~Rham cohomology groups
\[
H^i_{q\!\op-\!\dR}(X)
\]
for any smooth $\bb Z$-scheme $X$, as the hypercohomology of $q\!\op-\!\Omega_X$. We expect that these interpolate between de~Rham cohomology (for $q=1$) and singular cohomology (for $q\neq 1$), if $X$ is proper over $\bb Z[1/N]$ for some integer $N$; again, we refer to Section~\ref{sec:qdeRham} for details. The occurence of singular cohomology in this context may be unexpected, and reflects deep comparison isomorphisms from $p$-adic Hodge theory. We note that this degeneration from singular cohomology to de~Rham cohomology implies formally (by semicontinuity) that torsion in de~Rham cohomology is at least as large as in singular cohomology, cf.~Theorem~\ref{thmtorsion}, which is proved unconditionally in~\cite{BMS}. This ``explains'' certain pathologies of algebraic geometry in positive characteristic, such as the fact that Enriques surfaces in characteristic $2$ have nonzero $H^1_\dR$ (contrary to the situation in any other characteristic): This is forced by their fundamental group being $\bb Z/2\bb Z$ over the complex numbers.

The independence of $q\!\op-\!\Omega_R$ only up to quasi-isomorphism makes the conjecture (much) more ambiguous than we would like it to be. Unfortunately, we do not know a good way of making it more precise without obscuring the idea.\footnote{We note in particular that if the conjecture is correct, it could be ``correct in more than one way'', meaning that there might be several ways of ``making the $q$-de~Rham complex independent of coordinates''. This is because there might be several possible choices for the coordinate transformations. We actually expect this to happen in the relative situation, cf.~Example~\ref{ex:variants}~(ii) below.} Let us mention however that there is another example of a similar sort:

\begin{example} Let $R$ be a smooth $\bb F_p$-algebra. Then there is a functorial de~Rham--Witt complex $W\Omega^\bullet_{R/\bb F_p}$, cf.~\cite{IllusiedRWitt}. If one fixes an \'etale map $\bb F_p[T_1,\ldots,T_d]\to R$, then there is a unique deformation of $R$ to an \'etale map $\bb Z/p^n\bb Z[T_1,\ldots,T_d]\to R_n$ for all $n\geq 1$; let $\tilde{R} = \varprojlim_n R_n$, which is the $p$-adic completion of a smooth $\bb Z_p$-algebra. Then $W\Omega^\bullet_{R/\bb F_p}$ is computed in the derived category by the ($p$-adically completed) de~Rham complex of $\tilde{R}/\bb Z_p$, more precisely by
\[
\varprojlim_n \Omega^\bullet_{R_n/(\bb Z/p^n\bb Z)}\ .
\]
The first proof that the de~Rham complex of a lift to characteristic zero is independent of the lift was through Grothendieck's formalism of the crystalline site, cf.~\cite{BerthelotOgus}.\footnote{We challenge the reader to make the familiar statement ``the de~Rham complex of a lift is independent of the choice of a lift'' into a precise and useful statement (say, good enough to define crystalline cohomology globally, remembering that one cannot glue in the derived category) without saying how it is independent of the lift (i.e., proving it), or using $\infty$-categorical language.}
\end{example}

The $q$-de~Rham cohomology of this paper can be regarded as an analogue of crystalline cohomology if one replaces the base ring $\bb F_p$ by the integers $\bb Z$. Accordingly, the deformation $\bb Z_p$ of $\bb F_p$ is replaced by the deformation $\bb Z[[q-1]]$ of $\bb Z$.

The motivation for the conjectures in this paper comes from joint work with Bhargav Bhatt and Matthew Morrow, \cite{BMS}, that proves these conjectures after base change from $\bb Z[[q-1]]$ to Fontaine's ring $A_\inf$ (constructed from $\bb C_p$), for any prime $p$.\footnote{We are hopeful that the methods from \cite{BMS} can be extended to prove the conjectures in this paper. Unfortunately, this would not produce an analogue of the crystalline site or the de~Rham--Witt complex.} The paper \cite{BMS} relies heavily on the approach to $p$-adic Hodge theory built on the theory of perfectoid spaces, and in particular the almost purity theorem, as developed by Faltings, \cite{FaltingsJAMS}, \cite{FaltingsAlmostEtale}, and extended by the author in \cite{scholzethesis}, \cite{scholzepadichodge}, cf.~also related work of Kedlaya--Liu, \cite{KedlayaLiu}. We hope that this survey can convey some of the fascination originating from the theory of perfectoid spaces without actually making perfectoid spaces (and the surrounding technical baggage) appear on stage.

Let us remark that the theory of diamonds, \cite{ScholzeLectureNotes}, makes precise the idea (going back at least to Faltings, \cite{FaltingsKodaira}) that $\Spf A_\inf$ is a ``model'' of $\Spf \bb Z_p\times_{\Spec \bb F_1} \Spf \roi_{\bb C_p}$, where $\roi_{\bb C_p}$ is the ring of integers of $\bb C_p$. This vaguely suggests that the ring $\Spf \bb Z[[q-1]]$ occuring globally is related to the completion of the unknown $\Spec \bb Z\times_{\Spec \bb F_1} \Spec \bb Z$ along the diagonal. Let us also add that $q$-de~Rham cohomology admits many semilinear operations (as detailed in Section~\ref{sec:operations}) which are reminiscent of a shtuka structure; indeed, after base change to $A_\inf$, one gets mixed-characteristic shtukas in the sense of \cite{ScholzeLectureNotes} (which in this situation were first defined by Fargues, \cite{FarguesBK}).

Unfortunately, in the present context, we do not know an analogue of either the de~Rham--Witt complex or the crystalline site. Similarly, we do not know what it would mean to deform a smooth $\bb Z$-algebra $R$ over the completion of $\Spec \bb Z\times_{\Spec \bb F_1} \Spec \bb Z$ along the diagonal.

Other evidence for these conjectures comes from known structures appearing in abstract $p$-adic Hodge theory such as the theory of Wach modules, \cite{Wach}, \cite{BergerWach}.

In Section~\ref{sec:cohom}, we recall basic facts about singular and de~Rham cohomology of arithmetic varieties. In Section~\ref{sec:qdeRham}, we formulate the conjectures about the existence of $q$-de~Rham cohomology. Next, in Section~\ref{sec:known}, we explain what is known about these conjectures from \cite{BMS}, and sketch the proof in Section~\ref{sec:ideas}. In Section~\ref{sec:operations}, we formulate further conjectures about the structure of the $q$-de~Rham complex, and in particular conjecture the existence of many semilinear operators on it, inducing interesting semilinear operations on $q$-de~Rham cohomology. The relative situation is discussed in Section~\ref{sec:qconn}; we expect a canonical $q$-deformation of the Gau\ss--Manin connection. In the final Sections~\ref{sec:examples} and~\ref{sec:variants}, we discuss some examples and variants.

{\bf Acknowledgments.} This paper was written in relation to the Fermat Prize awarded by the Universit\'e Paul Sabatier in Toulouse. We are very thankful for this opportunity to express these (sometimes vague) ideas. The ideas expressed here were formed in discussions with Bhargav Bhatt and Matthew Morrow, and the author wants to thank them heartily. Moreover, he wants to thank Laurent Fargues and Kiran Kedlaya for useful discussions; Kedlaya had ideas about a $\bb Z[[q-1]]$-valued cohomology theory before, cf.~\cite{kedlayaglobalhodge}. This work was done while the author was a Clay Research Fellow.

\section{Cohomology of algebraic varieties}\label{sec:cohom}

This paper deals with the different cohomology theories associated with arithmetic varieties, and the comparison theorems.

To get started, recall the classical comparison between singular cohomology and de~Rham cohomology over the complex numbers. For this, let $X$ be a complex manifold of complex dimension $d$, thus real dimension $2d$. Regarding $X$ as a topological space, we have the singular cohomology groups $H^i(X,\bb Z)$, which vanish outside the range $0\leq i\leq 2d$. On the other hand, we can build the holomorphic de~Rham complex
\[
\Omega^\bullet_{X/\bb C} = \mathcal{O}_X\buildrel\nabla\over\to \Omega^1_{X/\bb C}\buildrel\nabla\over\to \Omega^2_{X/\bb C}\to\ldots\to \Omega^d_{X/\bb C}\to 0\ .
\]
Here, $\mathcal{O}_X$ denotes the sheaf of holomorphic functions on $X$, $\Omega^1_{X/\bb C}$ is the sheaf of holomorphic K\"ahler differentials (which is locally free of rank $d$ over $\mathcal{O}_X$), and $\Omega^i_{X/\bb C}$ is the $i$-th exterior power of $\Omega^1_{X/\bb C}$ over $\mathcal{O}_X$. The hypercohomology groups $H^i_\dR(X)$ of $\Omega_{X/\bb C}^\bullet$ are called de~Rham cohomology; these are complex vector spaces, which vanish outside the range $0\leq i\leq 2d$. Now one has the fundamental comparison result:

\begin{theorem} There is a canonical isomorphism $H^i_\dR(X)\cong H^i(X,\bb Z)\otimes_{\bb Z} \bb C$.
\end{theorem}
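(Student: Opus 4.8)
The plan is the classical one: realize the holomorphic de~Rham complex as a resolution of the constant sheaf $\ul{\bb C}$ on the underlying topological space of $X$, and then compare sheaf cohomology of $\ul{\bb C}$ with singular cohomology via the universal coefficient theorem. The first and essential step is the \emph{holomorphic Poincar\'e lemma}: the inclusion of locally constant functions in degree $0$ gives a quasi-isomorphism of complexes of sheaves
\[
\ul{\bb C}\ \longrightarrow\ \Omega^\bullet_{X/\bb C}\ .
\]
This is local, so I would reduce to the case $X=\Delta^d$ a polydisc. There one shows that a holomorphic $p$-form $\omega$ with $\nabla\omega=0$ and $p\geq 1$ has a holomorphic primitive, via the usual homotopy operator built from contraction with the (holomorphic) Euler vector field $E=\sum_j z_j\,\partial/\partial z_j$ followed by radial integration, $\omega\mapsto \int_0^1 t^{p-1}(\iota_E\omega)(tz)\,dt$; the point is that this operation sends holomorphic forms to holomorphic forms (the integrand is holomorphic in $z$, uniformly on compacta, so one may differentiate under the integral), and satisfies $\nabla K+K\nabla=\mathrm{id}$ in positive degrees. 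In degree $0$, $\nabla f=0$ forces $f$ locally constant. This exhibits $\Omega^\bullet_{X/\bb C}$ as a resolution of $\ul{\bb C}$.

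\textbf{Deducing the comparison.} Granting the Poincar\'e lemma, $\Omega^\bullet_{X/\bb C}\simeq \ul{\bb C}$ in the derived category of sheaves on $X$, so applying $R\Gamma(X,-)$ and taking cohomology gives a canonical isomorphism $H^i_\dR(X)=\bb H^i(X,\Omega^\bullet_{X/\bb C})\cong H^i(X,\ul{\bb C})$ (the individual sheaves $\Omega^p_{X/\bb C}$ need not be acyclic, but this is immaterial once one works with derived functors). Since $X$ is a manifold, hence locally contractible and paracompact, sheaf cohomology of a constant sheaf agrees canonically with singular cohomology, so $H^i(X,\ul{\bb C})\cong H^i(X,\bb C)$ and $H^i(X,\ul{\bb Z})\cong H^i(X,\bb Z)$. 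Finally, as $\bb C$ is torsion-free over $\bb Z$, the universal coefficient theorem gives $H^i(X,\bb C)\cong H^i(X,\bb Z)\otimes_{\bb Z}\bb C$ (the $\Tor$-term vanishes). Composing these canonical isomorphisms yields $H^i_\dR(X)\cong H^i(X,\bb Z)\otimes_{\bb Z}\bb C$, and functoriality in $X$ is inherited from each step, since the homotopy operator, the constant-sheaf identification, and the universal coefficient sequence are all functorial for holomorphic maps.

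\textbf{Main obstacle.} The one genuinely delicate point is the holomorphic Poincar\'e lemma, and specifically the verification that the primitive of a closed holomorphic form can be taken holomorphic — the $C^\infty$ analogue being entirely routine. One must check convergence of the radial integral (which is why contraction with the Euler field comes first, making the integrand integrable at $t=0$) and holomorphic dependence on the parameters. An alternative would be to combine the $C^\infty$ de~Rham resolution with the Dolbeault resolution and the degeneration of the resulting filtration on the Dolbeault bicomplex, but for a general, possibly noncompact, complex manifold the direct argument on polydiscs is the cleanest and most self-contained route; everything else in the proof is formal.
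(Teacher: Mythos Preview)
Your proposal is correct and follows essentially the same approach as the paper: both invoke the holomorphic Poincar\'e lemma to show that $\ul{\bb C}\to \Omega^\bullet_{X/\bb C}$ is a quasi-isomorphism of complexes of sheaves, and then pass to hypercohomology. The paper's own argument is just a one-line sketch (``any closed differential form can be integrated on an open ball''), whereas you supply the explicit homotopy operator, the identification of sheaf and singular cohomology, and the universal coefficient step; these are the details the paper suppresses, not a different route.
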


More precisely, embedding the constant $\bb C$ into $\mathcal{O}_X$ induces a map $\bb C\to \Omega^\bullet_{X/\bb C}$. The Poincar\'e lemma states that this is an isomorphism of complexes of sheaves; more precisely, one can cover $X$ by open balls, and for any open ball $U\subset X$, the complex
\[
0\to \bb C\to \mathcal{O}_X(U)\to \Omega^1_{X/\bb C}(U)\to \ldots\to \Omega^d_{X/\bb C}(U)\to 0
\]
is exact: Any closed differential form can be integrated on an open ball.

The simplest example is the case $X=\bb C^\times$, in which case $H^1(X,\bb Z) = \Hom(H_1(X,\bb Z),\bb Z)=\bb Z$ is generated by the class $\alpha\in H^1(X,\bb Z)$ dual to the loop $\gamma: [0,1]\to X\ : t\mapsto e^{2\pi i t}$ running once around the puncture. On the other hand, as $X$ is Stein, $H^i_\dR(X)$ is computed by the complex of global sections
\[
\mathcal{O}_X(X)\to \Omega^1_{X/\bb C}(X)\ .
\]
Any holomorphic $1$-form on $X$ can be written uniquely as $\sum_{n\in \bb Z} a_n z^n dz$ for coefficients $a_n\in \bb C$ subject to some convergence conditions. One can integrate this form as long as $a_{-1}=0$; this shows that $H^1_\dR(X) = \bb C$, generated by the class of $\omega = \frac{dz}{z}$. Concretely, the isomorphism $H^1(X,\bb Z)\otimes_{\bb Z} \bb C\cong H^1_\dR(X)$ is given in this case by sending $\alpha$ to
\[
\int_\gamma \omega = \int_0^1 e^{-2\pi i t} d(e^{2\pi i t}) = 2\pi i \int_0^1 dt = 2\pi i\ .
\]

In this paper, we are interested in the case where $X$ is given by the vanishing locus of polynomial equations with $\bb Z$-coefficients. More precisely, from now on let $X$ be a smooth, separated scheme of finite type over $\bb Z$. Thus, locally $X$ is of the form
\[
\Spec \bb Z[T_1,\ldots,T_n] / (f_1,\ldots,f_m)\ ,
\]
for some functions $f_1,\ldots,f_m\in \bb Z[T_1,\ldots,T_n]$, $m\leq n$, subject to the Jacobian criterion, i.e.~the ideal generated by all maximal minors of the matrix $(\frac{\partial f_i}{\partial T_j})_{i,j}$ is the unit ideal. This gives rise to a complex manifold $X(\bb C)$, locally given by the set of complex solutions to $f_1(T_1,\ldots,T_n) = \ldots = f_m(T_1,\ldots,T_n) = 0$.

In this situation, we can consider the algebraic de~Rham complex
\[
\Omega^\bullet_{X/\bb Z} = \mathcal{O}_X\buildrel\nabla\over\to \Omega^1_{X/\bb Z}\buildrel\nabla\over\to \Omega^2_{X/\bb Z}\to\ldots\to \Omega^d_{X/\bb Z}\to 0\ ,
\]
and we denote by $H^i_\dR(X)$ its hypercohomology groups. These groups form an integral structure for the de~Rham cohomology groups of $X(\bb C)$:

\begin{theorem}[Grothendieck] There is a canonical isomorphism $H^i_\dR(X)\otimes_{\bb Z} \bb C\cong H^i_\dR(X(\bb C))$.
\end{theorem}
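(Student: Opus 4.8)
The plan is to reduce the statement to the classical comparison over $\bb C$ and then feed that into Serre's GAGA, treating the non-proper case by passing to a good compactification and working with logarithmic differentials.

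First I would reduce to the base field $\bb C$. Since $\bb C$ is flat over $\bb Z$ and $X$ is noetherian (being separated of finite type), the formation of the hypercohomology of the bounded complex of coherent sheaves $\Omega^\bullet_{X/\bb Z}$ commutes with the base change $X_{\bb C}:=X\times_{\Spec\bb Z}\Spec\bb C\to X$, under which $\Omega^\bullet_{X/\bb Z}$ pulls back to $\Omega^\bullet_{X_{\bb C}/\bb C}$; exactness of $-\otimes_{\bb Z}\bb C$ then lets one commute the tensor past the Hodge--de~Rham spectral sequence and its abutment, so $H^i_\dR(X)\otimes_{\bb Z}\bb C\cong H^i_\dR(X_{\bb C})$. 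As $X_{\bb C}(\bb C)=X(\bb C)$ as complex manifolds, it suffices to construct, for a smooth separated finite-type $\bb C$-scheme $Y$, a canonical isomorphism $H^i_\dR(Y/\bb C)\cong H^i_\dR(Y^{\mathrm{an}})$, where $Y^{\mathrm{an}}:=Y(\bb C)$ and the right-hand side is the hypercohomology of the holomorphic de~Rham complex.

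Next I would write down the comparison map: analytification is a morphism of locally ringed spaces $\lambda\colon Y^{\mathrm{an}}\to Y$ (with $Y$ in its Zariski topology) with $\lambda^*\Omega^p_{Y/\bb C}\cong\Omega^p_{Y^{\mathrm{an}}}$ compatibly with $d$, so $\Omega^\bullet_{Y/\bb C}\to R\lambda_*\Omega^\bullet_{Y^{\mathrm{an}}}$ induces $H^i_\dR(Y/\bb C)\to H^i_\dR(Y^{\mathrm{an}})$; naturality of analytification gives the asserted canonicity once this map is shown to be an isomorphism. In the proper case, both sides are computed by the spectral sequence $E_1^{p,q}=H^q(-,\Omega^p)\Rightarrow H^{p+q}_\dR$, the comparison map is a map of spectral sequences, and on $E_1$-terms it is the comparison of the coherent cohomology of $\Omega^p_{Y/\bb C}$ on the proper scheme $Y$ with that of $\Omega^p_{Y^{\mathrm{an}}}$, which is an isomorphism by Serre's GAGA; hence so is the map on abutments. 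For general $Y$, I would use Nagata compactification together with Hironaka's resolution of singularities (legitimate since $\Char=0$) to choose an open immersion $j\colon Y\into\bar Y$ with $\bar Y$ smooth and proper over $\bb C$ and $D:=\bar Y\setminus Y$ a simple normal crossings divisor. Then Deligne's theorem computes the algebraic side, $H^i_\dR(Y/\bb C)\cong\bb H^i(\bar Y,\Omega^\bullet_{\bar Y}(\log D))$, while analytically $\Omega^\bullet_{\bar Y^{\mathrm{an}}}(\log D^{\mathrm{an}})\to Rj_*\Omega^\bullet_{Y^{\mathrm{an}}}$ is a quasi-isomorphism (the logarithmic holomorphic Poincar\'e lemma) and $\Omega^\bullet_{Y^{\mathrm{an}}}$ resolves $\bb C_{Y^{\mathrm{an}}}$ by the holomorphic de~Rham comparison recalled above, so $H^i_\dR(Y^{\mathrm{an}})=H^i(Y^{\mathrm{an}},\bb C)\cong\bb H^i(\bar Y^{\mathrm{an}},\Omega^\bullet_{\bar Y^{\mathrm{an}}}(\log D^{\mathrm{an}}))$. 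Since each $\Omega^p_{\bar Y}(\log D)$ is coherent on the proper scheme $\bar Y$, GAGA again identifies the two Hodge spectral sequences compatibly with the comparison map, and the isomorphism follows in general.

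I expect the main obstacle to lie entirely in the analytic inputs: the first two steps are formal. In the proper case the crux is precisely Serre's GAGA. The non-proper case is the genuine difficulty, since it requires resolution of singularities to produce a good compactification and, more essentially, Deligne's identification of $H^i_\dR(Y/\bb C)$ with $\bb H^i(\bar Y,\Omega^\bullet_{\bar Y}(\log D))$ — equivalently the comparison of $\Omega^\bullet_{\bar Y}(\log D)$ with $Rj_*\Omega^\bullet_{Y/\bb C}$ through a residue/weight d\'evissage — which is what permits replacing the open $Y$ by the proper $\bar Y$ and thereby reducing everything to GAGA.
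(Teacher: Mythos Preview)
The paper does not prove this theorem; it is quoted as a classical result of Grothendieck and immediately followed by the example $X=\bb G_m$, with no argument supplied. So there is no ``paper's own proof'' to compare against.

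Your sketch is correct and follows the standard route (essentially Grothendieck's original argument as refined by Deligne): flat base change from $\bb Z$ to $\bb C$, then GAGA on the Hodge--de~Rham $E_1$-page in the proper case, and in general a reduction to the proper case via a smooth compactification with simple normal crossings boundary together with the algebraic and analytic log-de~Rham identifications. All the ingredients you invoke --- flat base change for hypercohomology of a bounded complex of coherent sheaves on a separated noetherian scheme, Nagata plus Hironaka over $\bb C$, Deligne's quasi-isomorphism $\Omega^\bullet_{\bar Y}(\log D)\simeq Rj_*\Omega^\bullet_Y$, and the holomorphic log Poincar\'e lemma --- are valid in this setting, and the way you thread them together is the expected one.
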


As an example, take $X=\bb G_m = \Spec \bb Z[T^{\pm 1}]$, where $\bb Z[T^{\pm 1}] = \bb Z[T,U]/(TU-1)$ denotes the ring of Laurent polynomials, so that $X(\bb C) = \bb C^\ast$. In this case, the cohomology groups $H^i_\dR(X)$ are computed by the complex
\[
\bb Z[T^{\pm 1}]\to \bb Z[T^{\pm 1}] dT\ : \sum_{n\in \bb Z} a_n T^n\mapsto \sum_{n\in \bb Z} na_n T^{n-1} dT\ .
\]
In particular, $\omega=\frac{dT}T$ defines a class in $H^1_\dR(X)$. As a consequence, we see that in the chain of isomorphisms
\[
H^i_\dR(X)\otimes_{\bb Z}\bb C\cong H^i_\dR(X(\bb C))\cong H^i(X(\bb C),\bb Z)\otimes_{\bb Z} \bb C\ ,
\]
the integral structures $H^i_\dR(X)$ and $H^i(X(\bb C),\bb Z)$ are not identified: In this example, they differ by a factor of $2\pi i$. In general, the entries of the transition matrix are known as ``periods'', which form a very interesting class of (often) transcendental numbers; we refer to Kontsevich--Zagier, \cite{KontsevichZagier}, for a precise discussion.

In the example, we observe however that $H^1_\dR(X)$ has in addition many torsion classes. Namely, for any $n\in \bb Z$, $n\neq -1$, the function $T^n dT$ integrates to $\frac 1{n+1} T^{n+1}$; in other words, we cannot integrate it over $\bb Z$, but only $(n+1) T^n dT$ can be integrated over $\bb Z$. This shows that in fact
\[
H^1_\dR(X) = \bb Z\oplus \bigoplus_{n\in \bb Z, n\neq -1} \bb Z/(n+1)\bb Z\ .\footnote{A related observation is that $H^1_\dR(\bb A^1_{\bb Z})=\bigoplus_{n\geq 0} \bb Z/(n+1)\bb Z\neq 0$, so that over $\bb Z$, de~Rham cohomology is not $\bb A^1$-invariant, and thus does not qualify as ``motivic'' in the sense of Voevodsky, \cite{voevodsky}. Similarly, the $q$-de~Rham cohomology studied in this paper will not be $\bb A^1$-invariant; in some sense, the failure will now happen at nontrivial roots of unity, and in particular even in characteristic $0$.}
\]
In order to avoid this ``pathology'', we will often assume that $X$ is in addition proper over $\bb Z[\frac 1N]$ for some integer $N\geq 1$. In this case, $X(\bb C)$ is a compact complex manifold, and for all $i\geq 0$, $H^i_\dR(X)$ is a finitely generated $\bb Z[\frac 1N]$-module.

Note that this excludes the example $X=\bb G_m$. However, one can recover $H^1(\bb G_m)$ equivalently in $H^2(\bb P^1)$. More precisely, let $X=\bb P^1_{\bb Z}$ be the projective line over $\bb Z$. This is covered by two copies of the affine line $\mathbb A^1$, glued along $\bb G_m$. As $\mathbb A^1(\bb C) = \bb C$ is contractible, the Mayer--Vietoris sequence gives an isomorphism $H^1(\bb G_m(\bb C),\bb Z)\cong H^2(\bb P^1(\bb C),\bb Z)$. On the other hand, one computes $H^2_\dR(\bb P^1) = \bb Z$ (without extra torsion), generated by the image of $\frac{dT}T$ under the boundary map $H^1_\dR(\bb G_m)\to H^2_\dR(\bb P^1)$.

In many examples, one uses the isomorphism between singular and de~Rham cohomology as a tool to understand singular cohomology: While singular cohomology is quite abstract, differential forms are amenable to computation. One example close to the interests of the author is the cohomology of arithmetic groups: These can be rewritten as singular cohomology groups of locally symmetric varieties. Under the isomorphism with de~Rham cohomology, one gets a relation to automorphic forms, which are a very powerful tool.

Note however that the comparison isomorphism forgets about all torsion classes present in $H^i(X(\bb C),\bb Z)$. One classical situation is the case where $X$ is an Enriques surface, so that $X$ has a double cover by a K3 surface. This shows that $\pi_1(X(\bb C))= \bb Z/2\bb Z$, which implies that $H^2(X(\bb C),\bb Z)$ has a nontrivial $2$-torsion class. Interestingly, in this case it turns out that if $X$ is proper over $\bb Z[\frac 1N]$, where $2$ does not divide $N$, then $H^2_\dR(X)$ has a nontrivial $2$-torsion class as well; equivalently, $H^1_\dR(X_{\mathbb F_2})\neq 0$, where $X_{\mathbb F_2}$ denotes the fiber of $X$ over $\Spec \mathbb F_2\to \Spec \bb Z[\frac 1N]$. This is a well--known ``pathology'' of Enriques surfaces in characteristic $2$, cf.~\cite[Proposition 7.3.5]{IllusiedRWitt}.

One goal of the paper \cite{BMS} with Bhatt and Morrow was to show that this is in fact a special case of a completely general phenomenon. For any abelian group $A$ and integer $n\geq 1$, let $A[n]\subset A$ be the kernel of multiplication by $n$.

\begin{theorem}\label{thmtorsion} Assume that $X$ is proper and smooth over $\bb Z[\frac 1N]$. For all $i\geq 0$ and $n\geq 1$ coprime to $N$, the order of $H^i_\dR(X)[n]$ is at least the order of $H^i(X(\bb C),\bb Z)[n]$.
\end{theorem}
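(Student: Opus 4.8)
The plan is to reduce, one prime at a time, to a $p$-adic torsion inequality, and then to invoke the $A_\inf$-cohomology of \cite{BMS}: it realizes de~Rham cohomology as the restriction of a single perfect complex of $A_\inf$-modules to a Cartier divisor, and \'etale (hence singular) cohomology as its restriction to the complementary open, so that the inequality can be deduced by a semicontinuity argument in this one-parameter family.

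For the reduction, note that $H^i_\dR(X)$ is finitely generated over $\bb Z[1/N]$ and $H^i(X(\bb C),\bb Z)$ over $\bb Z$, and that for a finitely generated abelian group $M$ and an integer $n=\prod_p p^{v_p(n)}$ one has $\#M[n]=\prod_{p\mid n}\#M[p^{v_p(n)}]$; hence it is enough to fix a prime $p\nmid N$ and prove, for all $i\geq0$ and $a\geq1$,
\[
\#H^i_\dR(\mathfrak X/\bb Z_p)[p^a]\ \geq\ \#H^i_{\text{\'et}}(\mathfrak X_{\overline{\bb Q}_p},\bb Z_p)[p^a],\qquad \mathfrak X:=X\times_{\bb Z[1/N]}\bb Z_p .
\]
Here $H^i_\dR(X)\otimes_{\bb Z[1/N]}\bb Z_p\cong H^i_\dR(\mathfrak X/\bb Z_p)$ by flat base change for the coherent de~Rham complex of the smooth proper $\bb Z[1/N]$-scheme $X$; Artin's comparison isomorphism, together with the invariance of \'etale cohomology with finite coefficients under extension of the algebraically closed base field, gives $H^i(X(\bb C),\bb Z/p^m)\cong H^i_{\text{\'et}}(\mathfrak X_{\overline{\bb Q}_p},\bb Z/p^m)$ for all $m$ (going through $X_{\overline{\bb Q}}$); and the universal coefficient theorem turns this into an isomorphism on $p$-primary torsion after $\otimes\bb Z_p$. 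The free ranks of the two sides of the display already coincide (both equal $b_i(X(\bb C))$, by Grothendieck's and Artin's comparisons), so only the torsion is at stake.

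Now set $\roi=\roi_{\bb C_p}$, $A=A_\inf=W(\roi^\flat)$ and $\mu=[\epsilon]-1$. By the main construction of \cite{BMS} there is a perfect complex $R\Gamma_A(\mathfrak X_\roi)$ of $A$-modules, with finitely presented cohomology $H^i_A$ whose torsion submodule $T_i$ is killed by a power of $p$, and canonical quasi-isomorphisms
\[
R\Gamma_A(\mathfrak X_\roi)\otimes^{\bb L}_{A,\tilde\theta}\roi\ \simeq\ R\Gamma_\dR(\mathfrak X_\roi/\roi)\ \simeq\ R\Gamma_\dR(\mathfrak X/\bb Z_p)\otimes_{\bb Z_p}\roi
\]
(the de~Rham specialization, where $\ker\tilde\theta=(\tilde\xi)$ with $\tilde\xi$ a nonzerodivisor) and
\[
R\Gamma_A(\mathfrak X_\roi)\otimes_A A[1/\mu]\ \simeq\ R\Gamma_{\text{\'et}}(\mathfrak X_{\bb C_p},\bb Z_p)\otimes_{\bb Z_p}A[1/\mu]
\]
(the \'etale specialization, with $H^i_{\text{\'et}}(\mathfrak X_{\bb C_p},\bb Z_p)\cong H^i_{\text{\'et}}(\mathfrak X_{\overline{\bb Q}_p},\bb Z_p)$). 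From the second identification, $T_i[1/\mu]\cong H^i_{\text{\'et}}(\mathfrak X_{\overline{\bb Q}_p},\bb Z_p)_{\mathrm{tors}}\otimes_{\bb Z_p}A[1/\mu]$. From the first, since $\tilde\xi$ is a nonzerodivisor, the exact sequence
\[
0\to H^i_A\otimes_{A,\tilde\theta}\roi\to H^i_\dR(\mathfrak X_\roi/\roi)\to\Tor_1^A(H^{i+1}_A,\roi)\to0
\]
exhibits $T_i\otimes_{A,\tilde\theta}\roi$ as a submodule of $H^i_\dR(\mathfrak X_\roi/\roi)$. The remaining --- and crucial --- point is that the invariant factors of $T_i\otimes_{A,\tilde\theta}\roi$, measured by the valuation on $\roi$, dominate term by term those of $T_i[1/\mu]$; granting this, together with $H^i_\dR(\mathfrak X_\roi/\roi)=H^i_\dR(\mathfrak X/\bb Z_p)\otimes_{\bb Z_p}\roi$ and the identity $\#(\bb Z_p/p^c)[p^a]=p^{\min(a,c)}$, one recovers the displayed inequality for every $i$ and $a$.

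The main obstacle is the $A_\inf$-cohomology itself: constructing $R\Gamma_A(\mathfrak X_\roi)$ with the two specializations above is the core of \cite{BMS}, resting on the theory of perfectoid spaces, the almost purity theorem and the d\'ecalage functor $L\eta$, and is by far the deepest input. Granting it, the crucial point flagged above --- in essence a semicontinuity statement for $R\Gamma_A(\mathfrak X_\roi)$ along the Cartier divisor $\{\tilde\xi=0\}$ --- is a second, subtler difficulty: since $A_\inf$ is two-dimensional and non-Noetherian one cannot simply invoke upper semicontinuity of fiber dimensions, and one must instead use the finer structure of $R\Gamma_A(\mathfrak X_\roi)$ as a perfect complex, notably its reduction modulo $p$, which lives over the valuation ring $\roi^\flat$, where finitely presented modules are direct sums of cyclics. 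Comparing the Fitting ideals of $T_i$ under the two base changes $A\to\roi$ and $A\to A[1/\mu]$ then gives the statement for \emph{every} $n$, rather than merely a comparison of total torsion lengths --- the relevant geometric input being that $\tilde\xi$ and $p$ cut out distinct divisors of $\Spec A$ ($A/\tilde\xi\cong\roi$, while $A/p\cong\roi^\flat$). Were the conjectural $q$-de~Rham complex over the regular Noetherian ring $\bb Z[[q-1]]$ available with an analogous degeneration, this last step would instead be routine.
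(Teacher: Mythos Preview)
Your proposal is correct and follows exactly the approach the paper indicates: the paper does not spell out a proof but says the theorem ``is proved unconditionally in~\cite{BMS}'' and that, given the interpolating cohomology theory, it follows ``by a standard semicontinuity argument'', which is precisely the reduction to a single prime $p$, the use of the $A_\inf$-cohomology with its de~Rham and \'etale specializations (Theorems~\ref{thmA} and~\ref{thmB} here), and the semicontinuity/Fitting-ideal comparison you describe. Your final remark that over the conjectural base $\bb Z[[q-1]]$ the semicontinuity step would be routine is exactly the point the paper makes after Conjecture~\ref{ConjB}.
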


In other words, torsion in singular cohomology forces torsion in de~Rham cohomology. Note that in the theorem one can assume that $n$ is a power of a prime $p$, in which case the statement depends only on the base change of $X$ to $\bb Z_p$ (if one replaces singular cohomology by \'etale cohomology). We prove the result more generally for any $X$ over a possibly highly ramified extension of $\bb Z_p$; more precisely, we prove it for any proper smooth formal scheme over the ring of integers $\mathcal{O}_{\bb C_p}$ in the completed algebraic closure $\bb C_p$ of $\bb Q_p=\bb Z_p[\frac 1p]$.

The theorem gives a means for studying torsion in singular cohomology using differential forms.

\section{$q$--de~Rham cohomology}\label{sec:qdeRham}

We deduce Theorem~\ref{thmtorsion} from the construction of a new integral $p$-adic cohomology theory interpolating between \'etale and de~Rham cohomology, which we will discuss further in Section~\ref{sec:known}. Starting with a smooth scheme over $\bb Z$, we expect a certain refinement of our cohomology theory, as follows.

Let $\bb Z[[q-1]]$ denote the $(q-1)$-adic completion of $\bb Z[q]$. In other words, $\bb Z[[q-1]]$ consists of all power series $\sum_{n\geq 0} a_n (q-1)^n$ with coefficients $a_n\in \bb Z$. For any integer $n\geq 0$, we have Gau\ss's $q$-analogue
\[
[n]_q = \frac{q^n-1}{q-1} = 1 + q + ... + q^{n-1}
\]
of the integer $n$. Setting $q=1$ recovers $[n]_1 = n$.

In the example $R=\bb Z[T]$, we can introduce a $q$-deformation $q\!\op-\!\Omega^\bullet_{R[[q-1]]/\bb Z[[q-1]]}$ of the de Rham complex $\Omega^\bullet_{R/\bb Z}$ as follows:
\[
q\!\op-\!\Omega^\bullet_{R[[q-1]]/\bb Z[[q-1]]} = R[[q-1]]\buildrel\nabla_q\over\to \Omega^1_{R/\bb Z}[[q-1]]: T^n\mapsto [n]_q T^{n-1} dT\ .
\]
This is a complex of $\bb Z[[q-1]]$-modules such that specializing at $q=1$ recovers $\Omega^\bullet_{R/\bb Z}$:
\[
q\!\op-\!\Omega^\bullet_{R[[q-1]]/\bb Z[[q-1]]}\otimes_{\bb Z[[q-1]]} \bb Z = \Omega^\bullet_{R/\bb Z}\ .
\]
Let us give a better formula for the differential $\nabla_q: R[[q-1]]\to \Omega^1_{R/\bb Z}[[q-1]]\cong R[[q-1]] dT$. For any function $f(T)\in R[[q-1]]$,
\[
\nabla_q(f(T)) = \frac{f(qT)-f(T)}{qT-T} dT\ .
\]
Indeed, applying this to $f(T)=T^n$ gives
\[
\nabla_q(T^n) = \frac{q^nT^n - T^n}{qT-T} dT = \frac{q^n-1}{q-1}T^{n-1} dT = [n]_q T^{n-1} dT\ .
\]
In other words, $\nabla_q$ is a finite $q$-difference quotient.

Note that so far we could have worked over $\bb Z[q]\subset \bb Z[[q-1]]$; the restriction to power series will only become important later. In this setting (with fixed coordinates), $\nabla_q$ is also known as the Jackson derivative, \cite{jackson}, and the resulting $q$--de~Rham cohomology as Aomoto--Jackson cohomology, \cite{aomoto}; there is an extensive literature on the subject which (due to ignorance of the author) we do not try to survey. Let us just compute that
\[
H^1(q\!\op-\!\Omega^\bullet_{R[[q-1]]/\bb Z[[q-1]]}) = \bb Z[[q-1]]\oplus \widehat{\bigoplus}_{n\in \bb Z,n\neq -1} \bb Z[[q-1]] / [n+1]_q \bb Z[[q-1]]\ ;
\]
here the direct sum is $(q-1)$-adically completed (technically, one has to take the derived $(q-1)$-adic completion). Thus, where we previously had $n$-torsion, we now get $[n]_q$-torsion (i.e., at nontrivial $n$-th roots of unity), which spreads out into characteristic $0$.

Now we try to extend this definition to all smooth $R$-algebras. Locally on $\Spec R$, any smooth $R$-algebra admits an \'etale map $\square: \bb Z[T_1,\ldots,T_d]\to R$, which we will refer to as a framing. This induces a (formally) \'etale map $\bb Z[T_1,\ldots,T_d][[q-1]]\to R[[q-1]]$. In the example, to define the $q$--derivative, we had to make sense of the automorphism $f(T)\mapsto f(qT)$. In coordinates, for any $i=1,\ldots,d$, we have the automorphism $\gamma_i$ of $\bb Z[T_1,\ldots,T_d][[q-1]]$ sending $T_i$ to $qT_i$ and $T_j$ to $T_j$ for $j\neq i$. As $\bb Z[T_1,\ldots,T_d][[q-1]]\to R[[q-1]]$ is formally \'etale, these automorphisms lift uniquely to automorphisms $\gamma_i$ of $R[[q-1]]$. Here, we crucially use the $(q-1)$-adic completion.

Using these notations, we can define the $q$--de~Rham complex
\[
q\!\op-\!\Omega^\bullet_{R[[q-1]]/\bb Z[[q-1]],\square} := R[[q-1]]\buildrel\nabla_q\over\to \Omega^1_{R/\bb Z}[[q-1]]\to\ldots\to \Omega^d_{R/\bb Z}[[q-1]]\ ,
\]
where
\[
\nabla_q(f) = \sum_{i=1}^d \frac{\gamma_i(f)-f}{qT_i-T_i} dT_i\ ,
\]
and the higher differentials are defined similarly. Again, $q\!\op-\!\Omega^\bullet_{R[[q-1]]/\bb Z[[q-1]],\square}$ is a complex of $\bb Z[[q-1]]$-modules, and
\[
q\!\op-\!\Omega^\bullet_{R[[q-1]]/\bb Z[[q-1]],\square}\otimes_{\bb Z[[q-1]]} \bb Z = \Omega^\bullet_{R/\bb Z}\ .
\]

In examples, one observes quickly that $q\!\op-\!\Omega^\bullet_{R[[q-1]]/\bb Z[[q-1]],\square}$ depends on the choice of coordinates. However, its cohomology groups should not depend on this choice:

\begin{conjecture}\label{ConjA} The complex $q\!\op-\!\Omega^\bullet_{R[[q-1]]/\bb Z[[q-1]],\square}$ is independent of the choice of coordinates up to canonical quasi-isomorphism. More precisely, there is a functor $R\mapsto q\!\op-\!\Omega_R$ from the category of smooth $\bb Z$-algebras to the $\infty$-category of $E_\infty$-$\bb Z[[q-1]]$-algebras, such that $q\!\op-\!\Omega_R$ is computed by $q\!\op-\!\Omega^\bullet_{R[[q-1]]/\bb Z[[q-1]],\square}$ for any choice of framing $\square$.
\end{conjecture}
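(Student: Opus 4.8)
The plan is to imitate the resolution of the analogous ambiguity in the crystalline theory, where ``the de~Rham complex of a lift is independent of the lift'' becomes precise and provable once crystalline cohomology is realised as the cohomology of an intrinsically defined site, together with a Poincar\'e lemma that computes it in coordinates. So one wants an intrinsic ``$q$-crystalline'' cohomology theory $R\mapsto q\!\op-\!\Omega_R$ --- ideally the cohomology of a ``$q$-crystalline site'', with an appropriate notion of ``$q$-divided powers'' --- which is manifestly functorial and carries an $E_\infty$-$\bb Z[[q-1]]$-algebra structure for free, together with a ``$q$-Poincar\'e lemma'' asserting that for any framing $\square$ it is computed by $q\!\op-\!\Omega^\bullet_{R[[q-1]]/\bb Z[[q-1]],\square}$. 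Everything then reduces to constructing the theory and proving the $q$-Poincar\'e lemma.

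I would first do this $p$-adically, one prime at a time, where there is more structure: the Frobenius lift $\phi\colon q\mapsto q^p$ on $\bb Z_p[[q-1]]$ and the element $[p]_q=\phi(q-1)/(q-1)$, which plays the role of $p$ and cuts out $\bb Z_p[[q-1]]/[p]_q\cong\bb Z_p[\zeta_p]$. One route is to mimic PD-envelopes: form ``$[p]_q$-divided-power'' (or $\delta$-ring-theoretic) envelopes of the surjections coming from a framing, assemble them into a site, and prove by a Koszul/continuous-group-cohomology computation --- using that the lifted automorphisms $\gamma_i$ are congruent to the identity modulo $q-1$ --- that its cohomology is the Koszul complex on the operators $\tfrac{\gamma_i-1}{\,qT_i-T_i\,}$, i.e.\ exactly $q\!\op-\!\Omega^\bullet_{R[[q-1]]/\bb Z[[q-1]],\square}$. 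An alternative route is to extend the perfectoid and almost-purity methods of \cite{BMS}, which already establish the coordinate-independent statement after base change along $\bb Z_p[[q-1]]\to A_\inf$, $q\mapsto[\varepsilon]$, and to descend that structure back to $\bb Z_p[[q-1]]$. On either route the crux is the $q$-Poincar\'e lemma.

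Granting the $p$-adic statements, the integral statement over $\bb Z[[q-1]]$ should follow by gluing: rationally, the Taylor expansion of $\nabla_q$ in powers of $q-1$ in terms of the ordinary higher derivatives (valid over $\bb Q$-algebras) gives a functorial quasi-isomorphism $q\!\op-\!\Omega^\bullet_{R[[q-1]]/\bb Z[[q-1]],\square}\otimes\bb Q \simeq \Omega^\bullet_{R/\bb Z}\otimes_{\bb Z}\bb Q[[q-1]]$, and one assembles the rational theory and all the $p$-adic theories along the arithmetic fracture square, which holds for $\bb Z[[q-1]]$ as it does for $\bb Z$ (note $\bb Z_p[[q-1]]$ is already $p$-complete). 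The main obstacle is that none of this is currently available: there is no known ``$q$-crystalline site'', and the author's remark that one is missing, over the completion of $\Spec \bb Z\times_{\Spec \bb F_1}\Spec \bb Z$ along the diagonal, the analogue of the crystalline site over $\bb F_p$ is exactly the point --- there is no Frobenius lift on $\bb Z[[q-1]]$ handling all primes at once. Absent any intrinsic construction one is reduced to writing the coordinate-change quasi-isomorphisms and all their higher coherences by hand, organised as a homotopy-coherent family indexed by all framings, or as a left Kan extension from polynomial rings; and even its lowest layer --- a quasi-isomorphism realising $y=x+1$ for $R=\bb Z[x]$ --- is not presently known, which is why the statement is offered as a conjecture.
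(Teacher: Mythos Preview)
The statement is a \emph{Conjecture} in the paper, and the paper contains no proof of it. The author says this explicitly in several places: ``we do not know an analogue of either the de~Rham--Witt complex or the crystalline site'' in this setting, and even for $R=\bb Z[x]$ with the coordinate change $y=x+1$ ``we are unable to write down explicitly the expected quasi-isomorphism''. The only positive result the paper offers toward the conjecture is Theorem~\ref{thmA}, which establishes the statement after completed base change along $\bb Z_p[[q-1]]\to A_\inf$, via the construction $A\Omega_S = L\eta_{q-1} R\Gamma(\Delta,A_\inf(\bar{S}))$ and the almost purity machinery sketched in Section~\ref{sec:ideas}. Your proposal correctly identifies all of this: you outline a reasonable program (an intrinsic $q$-crystalline site with a notion of $q$-divided powers, a $q$-Poincar\'e lemma computing it in coordinates, a $p$-adic construction followed by arithmetic gluing with the rational comparison of Lemma~\ref{compqdRdR}), and then accurately conclude that the key ingredients are not available and that the statement is therefore offered as a conjecture. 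So there is nothing in the paper to compare your argument against; your diagnosis agrees with the author's.

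One remark on the strategy itself: the $p$-adic route you sketch --- $\delta$-ring structures, $[p]_q$-PD envelopes, and a Koszul computation giving the $q$-de~Rham complex --- is indeed the shape of the eventual resolution (via the $q$-crystalline and prismatic formalisms developed after this paper was written). The gluing step over $\bb Z[[q-1]]$ via an arithmetic fracture square is more delicate than you suggest, since one needs the $p$-adic theories for varying $p$ to be compatible in a homotopy-coherent way, not just individually defined; but this is a refinement, not a flaw, in what is in any case only a proposed outline.
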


\begin{remark} The notion of an $E_\infty$-algebra is a weakening of the notion of a commutative differential graded algebra, where commutativity only holds ``up to coherent higher homotopy'', cf.~e.g.~\cite{LurieHigherAlgebra}. The $q$-de~Rham complex is not a commutative differential graded algebra, as seen by the asymmetry in the $q$-Leibniz rule
\[
\nabla_q(f(T)g(T)) = g(T) \nabla_q(f(T)) + f(qT) \nabla_q(g(T))\ .
\]
However, it can be shown that $q\!\op-\!\Omega^\bullet_{R[[q-1]]/\bb Z[[q-1]],\square}$ is naturally an $E_\infty$-$\bb Z[[q-1]]$-algebra. A related phenomenon is that the category of modules with $q$-connections is symmetric monoidal, although it is a nontrivial exercise to write down the tensor product, cf.~Section~\ref{sec:qconn} below for further discussion.
\end{remark}

If the conjecture is true, then we can glue the complexes $q\!\op-\!\Omega_R$ to get a deformation $q\!\op-\!\Omega_X$ of the de~Rham complex for any smooth scheme $X$ over $\bb Z$. One can then define $q$-de~Rham cohomology groups
\[
H^i_{q\!\op-\!\dR}(X) := \bb H^i(X,q\!\op-\!\Omega_X)
\]
as the hypercohomology groups of $q\!\op-\!\Omega_X$. Specializing at $q=1$ gives de~Rham cohomology; more precisely, taking into account the $\Tor_1$-term, we have short exact sequences
\[
0\to H^i_{q\!\op-\!\dR}(X)/(q-1)\to H^i_\dR(X)\to H^{i+1}_{q\!\op-\!\dR}(X)[q-1]\to 0\ ,
\]
where the last term denotes the $(q-1)$-torsion. If $X$ is proper over $\bb Z[\frac 1N]$ for some $N$, one can deduce from this (and $(q-1)$-adic completeness) that $H^i_{q\!\op-\!\dR}(X)$ is a finitely generated $\bb Z[\frac 1N][[q-1]]$-module for all $i\in \bb Z$.

On the other hand, after inverting $q-1$, we expect a relation to singular cohomology.

\begin{conjecture}\label{ConjB} Assume that $X$ is proper and smooth over $\bb Z[\frac 1N]$. There are isomorphisms
\[
H^i_{q\!\op-\!\dR}(X)[\tfrac 1{q-1}]\cong H^i(X(\bb C),\bb Z)\otimes_{\bb Z} \bb Z[\tfrac 1N]((q-1))\ .
\]
\end{conjecture}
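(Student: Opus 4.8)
The plan is to reduce Conjecture~\ref{ConjB} to a statement over $p$-adic bases for every prime $p$ and then invoke the comparison theorems of \cite{BMS}. The key point is that $\bb Z[\tfrac 1N]((q-1))$ is ``built'' from its $p$-adic completions together with its rationalization, and on the generic fibre $q$-de~Rham cohomology after inverting $q-1$ becomes, conjecturally, just a base change of de~Rham cohomology along $\bb Z\to\bb Z((q-1))$ (since over a $\bb Q$-algebra the Jackson differential can be conjugated to the usual one by a $\Gamma$-factor, using the Taylor expansion alluded to in the introduction). So first I would treat the rational statement: after inverting $N$ and $q-1$, but \emph{not} $p$-adically completing, one has $q\!\op-\!\Omega^\bullet_{R[[q-1]]/\bb Z[[q-1]],\square}[\tfrac 1{q-1}]$, and one shows by an explicit change of variables (valid over the $\bb Q$-algebra $\bb Z[\tfrac 1N]((q-1))$) that this is quasi-isomorphic to $\Omega^\bullet_{R/\bb Z}\otimes_{\bb Z}\bb Z[\tfrac 1N]((q-1))$, compatibly with gluing; Grothendieck's theorem then identifies the hypercohomology with $H^i_\dR(X(\bb C))\otimes_{\bb C}(\ldots)$, hence with $H^i(X(\bb C),\bb Z)\otimes_{\bb Z}\bb Z[\tfrac 1N]((q-1))$ after choosing the period identification. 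The subtlety here is that the change of variables is \emph{not} the one making the complex canonical over $\bb Z[[q-1]]$ itself, and that one must check it is coherent enough to pass to hypercohomology of a non-affine $X$.

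Next I would handle the $p$-adic part, which is where \cite{BMS} does the real work. Fix a prime $p\nmid N$ and a completed algebraic closure $\bb C_p$ of $\bb Q_p$ with ring of integers $\roi_{\bb C_p}$ and Fontaine's ring $A_\inf$. Choosing a compatible system of $p$-power roots of unity gives a map $\bb Z[[q-1]]\to A_\inf$ sending $q$ to $[\varepsilon]$. The main theorem of \cite{BMS} provides, for the formal scheme $\mathfrak X = X_{\roi_{\bb C_p}}^{\wedge}$, an $A_\inf$-cohomology $R\Gamma_{A_\inf}(\mathfrak X)$ which is computed \'etale-locally by $q\!\op-\!\Omega^\bullet_{\square}\otimes_{\bb Z[[q-1]]}A_\inf$, which recovers crystalline (hence de~Rham) cohomology modulo appropriate ideals, and which, crucially, satisfies $R\Gamma_{A_\inf}(\mathfrak X)\otimes_{A_\inf}A_\inf[\tfrac 1{\mu}]\simeq R\Gamma_{\text{\'et}}(X_{\bb C_p},\bb Z_p)\otimes_{\bb Z_p}A_\inf[\tfrac 1{\mu}]$, where $\mu=[\varepsilon]-1$ is a unit multiple of $q-1$. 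So the $p$-completed, $(q-1)$-inverted $q$-de~Rham cohomology, after base change to $A_\inf[\tfrac1\mu]$, is $H^i_{\text{\'et}}(X_{\bb C_p},\bb Z_p)\otimes A_\inf[\tfrac1\mu]$; by faithful flatness of $\bb Z_p[[q-1]][\tfrac1{q-1}]^{\wedge}_p\to A_\inf[\tfrac1\mu]$ (or at least enough flatness to descend the isomorphism class), this descends to give $(H^i_{q\!\op-\!\dR}(X)[\tfrac1{q-1}])^{\wedge}_p\cong H^i_{\text{\'et}}(X_{\bb C_p},\bb Z_p)\otimes_{\bb Z_p}(\ldots)$, and \'etale--singular comparison identifies the right side with $H^i(X(\bb C),\bb Z)\otimes\bb Z_p((q-1))^{\wedge}$.

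Finally I would assemble the two pieces. The point is that the claimed isomorphism lives over $\bb Z[\tfrac1N]((q-1))$, and a finitely generated module over this ring is determined, up to the data we want, by its rationalization together with its $p$-completions for $p\nmid N$ in a Hasse-principle fashion; since both sides of Conjecture~\ref{ConjB} are finitely generated $\bb Z[\tfrac1N]((q-1))$-modules (for the left side this uses the finite generation statement recorded after Conjecture~\ref{ConjA}), and we have produced compatible isomorphisms rationally and after each relevant $p$-completion, an arithmetic fracture / Mayer--Vietoris square argument glues them to a single isomorphism over $\bb Z[\tfrac1N]((q-1))$. Canonicity is then inherited from the canonicity of the individual comparison isomorphisms.

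\medskip

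The main obstacle, and the reason this is only a conjecture, is that everything above presupposes Conjecture~\ref{ConjA}: to even speak of $H^i_{q\!\op-\!\dR}(X)$ for non-affine $X$ one must know that the local complexes $q\!\op-\!\Omega^\bullet_{\square}$ glue, i.e.\ that they are independent of coordinates up to \emph{coherent} quasi-isomorphism, and this is exactly what we cannot prove integrally over $\bb Z[[q-1]]$. The $A_\inf$-theory of \cite{BMS} circumvents this by constructing $R\Gamma_{A_\inf}$ by a global procedure (pro-\'etale descent from the perfectoid cover, with the $q$-de~Rham complex only appearing as a local computation), but there is no known analogue of this global construction over $\bb Z[[q-1]]$ — no crystalline site, no de~Rham--Witt complex. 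Thus the honest status is: Conjecture~\ref{ConjB} follows from Conjecture~\ref{ConjA} by the fracture argument above, using \cite{BMS} for the $p$-adic input and a $\Gamma$-factor change of variables for the rational input; the genuinely hard and open part is the coordinate-independence Conjecture~\ref{ConjA} itself.
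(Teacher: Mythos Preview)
Your overall strategy---reduce to each prime $p\nmid N$ and invoke \cite{BMS}---matches the paper's, but the paper's execution is both simpler and more honest about what is actually claimed.

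First, a key correction: the paper explicitly does \emph{not} assert that the isomorphism in Conjecture~\ref{ConjB} is canonical. It expects canonical isomorphisms only after base change to $A_\inf$ for each prime $p\nmid N$ (this is Conjecture~\ref{ConjC}, verified by Theorem~\ref{thmB}); over $\bb Z[\tfrac 1N]((q-1))$ itself, only an \emph{abstract} isomorphism is claimed. So your final sentence, ``Canonicity is then inherited from the canonicity of the individual comparison isomorphisms,'' overshoots the target. Relatedly, your fracture-square gluing would require the rational and $p$-adic comparison maps to agree on overlaps, and there is no reason they should: the rational comparison goes through the transcendental period isomorphism, while the $p$-adic one goes through $A_\inf$.

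Second, your ``rational part'' contains an error: $\bb Z[\tfrac 1N]((q-1))$ is \emph{not} a $\bb Q$-algebra (the primes not dividing $N$ are still not invertible), so the Taylor-series conjugation of $\nabla_q$ to $\nabla$ from Lemma~\ref{compqdRdR} is not available there. What you get over $\bb Q((q-1))$ is an identification with de~Rham cohomology, and the period isomorphism to singular cohomology needs $\bb C$; over $\bb Q((q-1))$ you only learn that both sides are free of the same rank.

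The paper bypasses all of this with a single observation: $\bb Z[\tfrac 1N]((q-1))$ is a principal ideal domain. Both sides of Conjecture~\ref{ConjB} are finitely generated modules over this PID, so by the structure theorem they are determined by rank and elementary divisors. The ranks agree (Betti numbers equal de~Rham dimensions), and the elementary divisors at each prime $p\nmid N$ are read off after faithfully flat base change to $A_\inf$, where Theorem~\ref{thmB} (granting Conjecture~\ref{ConjC}) says the two sides become isomorphic. Hence they are abstractly isomorphic over $\bb Z[\tfrac 1N]((q-1))$. No fracture square, no gluing, no canonicity claim---and no need for a separate rational step beyond knowing the ranks match.
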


We do not expect these isomorphisms to be canonical; rather, we expect canonical isomorphisms after base extension to Fontaine's period ring $A_\mathrm{inf}$ for any prime $p$ not dividing $N$; we will discuss this in Section~\ref{sec:known} along with the results of \cite{BMS}. However, taking together these comparison isomorphisms implies, by the structure result for modules over principal ideal domains\footnote{Note that $\bb Z[\tfrac 1N]((q-1))$ is a principal ideal domain!}, that an isomorphism as in the conjecture exists.

We note that Conjecture~\ref{ConjA} and Conjecture~\ref{ConjB} together imply Theorem~\ref{thmtorsion} by a standard semicontinuity argument.

As observed earlier, the $q$-de~Rham complex tends to be interesting at roots of unity. Let us give a description of the cohomology groups of $q\!\op-\!\Omega_R$ after specialization at a $p$-th root of unity. Let $\Phi_p(q) = [p]_q = \frac{q^p-1}{q-1}$ be the $p$-th cyclotomic polynomial. Note that by sending $q$ to $\zeta_p$, $\bb Z[[q-1]]/\Phi_p(q) = \bb Z_p[\zeta_p]$, where $\zeta_p$ is a primitive $p$-th root of unity. Fix any framing $\square: \bb Z[T_1,\ldots,T_d]\to R$. Let $\hat{R}$ be the $p$-adic completion of $R$. Note that there is a unique lift of Frobenius $\varphi: \hat{R}\to \hat{R}$ sending $T_i$ to $T_i^p$; again, this follows from \'etaleness of $\square$. There is an identification
\[
R[[q-1]]/\Phi_p(q) = \hat{R}[\zeta_p]\ ,
\]
and we extend $\varphi$ to $\hat{R}[\zeta_p]$ by $\varphi(\zeta_p)=\zeta_p$.

\begin{proposition}\label{propcartier} Let $R$ be a smooth $\bb Z$-algebra with framing $\square: \bb Z[T_1,\ldots,T_d]\to R$. For brevity, write $q\!\op-\!\Omega_R^\square := q\!\op-\!\Omega^\bullet_{R[[q-1]]/\bb Z[[q-1]],\square}$.
\begin{altenumerate}
\item[{\rm (i)}] The image of the map $\varphi: \hat{R}[\zeta_p]\to \hat{R}[\zeta_p] = R[[q-1]]/\Phi_p(q)$ lands in the kernel of
\[
\nabla_q: R[[q-1]]/\Phi_p(q)\to \Omega^1_{R/\bb Z}[[q-1]]/\Phi_p(q)\ .
\]
This induces an isomorphism
\[
\hat{R}[\zeta_p] = H^0(q\!\op-\!\Omega_R^\square/\Phi_p(q))\ .
\]
\item[{\rm (ii)}] The boundary map
\[
\partial: H^0(q\!\op-\!\Omega_R^\square/\Phi_p(q))\to H^1(q\!\op-\!\Omega_R^\square/\Phi_p(q))
\]
associated with the short exact sequence of complexes
\[
0\to q\!\op-\!\Omega_R^\square/\Phi_p(q)\buildrel{\Phi_p(q)}\over\longrightarrow q\!\op-\!\Omega_R^\square/\Phi_p(q)^2\to q\!\op-\!\Omega_R^\square/\Phi_p(q)\to 0\ ,
\]
is a continuous $\bb Z_p[\zeta_p]$-linear derivation of $\hat{R}[\zeta_p] = H^0(q\!\op-\!\Omega_R^\square/\Phi_p(q))$. The induced map
\[
\Omega^1_{R/\bb Z}\otimes_R \hat{R}[\zeta_p]\to H^1(q\!\op-\!\Omega_R^\square/\Phi_p(q))
\]
is an isomorphism.
\item[{\rm (iii)}] For all $i\geq 0$, cup product induces an isomorphism
\[
\Omega^i_{R/\bb Z}\otimes_R \hat{R}[\zeta_p]\cong H^i(q\!\op-\!\Omega_R^\square/\Phi_p(q))\ .
\]
\end{altenumerate}
\end{proposition}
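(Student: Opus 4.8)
\emph{Overall strategy.} The statement is a $q$-deformation of the classical Cartier isomorphism, and the plan is to prove it by the usual three-step template: first reduce to the polynomial ring $R=\bb Z[T_1,\ldots,T_d]$ by \'etale base change along the framing, then reduce to the case $d=1$ by a K\"unneth/multiplicativity argument, and finally compute the one-variable case by hand. Along the way one also checks that all one-variable cohomology groups are flat over $\bb Z_p[\zeta_p]$, which is what makes the two reductions clean.

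\emph{The two reductions.} The complex $q\!\op-\!\Omega_R^\square$ is the Koszul-type complex attached to the commuting automorphisms $\gamma_1,\ldots,\gamma_d$ of $R[[q-1]]$, which by construction are the unique lifts along the formally \'etale map $\square\colon\bb Z[T_1,\ldots,T_d][[q-1]]\to R[[q-1]]$ of the corresponding automorphisms downstairs. It is elementary to check directly that the entire package --- the complex $q\!\op-\!\Omega_R^\square/\Phi_p(q)$, the Frobenius lift $\varphi$, the boundary map $\partial$, the cup product, and the modules $\Omega^i_{R/\bb Z}\otimes_R\hat R[\zeta_p]$ --- is obtained from the corresponding data for $\bb Z[T_1,\ldots,T_d]$ by base change along $\widehat{\bb Z[T_1,\ldots,T_d]}[\zeta_p]\to\hat R[\zeta_p]$ (since the extension is formally \'etale it carries a unique compatible $q$-connection and the complex simply tensors up), and this map is flat, being the $p$-adic completion of an \'etale map; hence all three assertions reduce to $R=\bb Z[T_1,\ldots,T_d]$. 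For such $R$ the operators $\tfrac{\gamma_i-1}{(q-1)T_i}$ commute pairwise, so $q\!\op-\!\Omega_R^\square/\Phi_p(q)$ is the $(p$-adically completed$)$ tensor product over $\bb Z_p[\zeta_p]$ of the one-variable complexes $q\!\op-\!\Omega_{\bb Z[T_i]}^\square/\Phi_p(q)$; by the K\"unneth formula --- together with the flatness of the one-variable cohomology groups noted below, so that no $\Tor$-terms appear --- this reduces (i), (ii), (iii) to $d=1$, the cup product corresponding to the K\"unneth product and $\Omega^\bullet$ to the exterior algebra on $\Omega^1$.

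\emph{The case $R=\bb Z[T]$.} Here $q\!\op-\!\Omega_R^\square/\Phi_p(q)$ is the two-term complex $\hat R[\zeta_p]\xrightarrow{\nabla_q}\hat R[\zeta_p]\,dT$ with $\nabla_q(T^n)=[n]_{\zeta_p}T^{n-1}\,dT$, and the key elementary observation is that $[n]_{\zeta_p}$ is a \emph{unit} in $\bb Z_p[\zeta_p]$ when $p\nmid n$ and vanishes when $p\mid n$. Therefore $\ker\nabla_q$ is the completed span of the monomials $T^{pn}$, which is precisely the image of the injective map $\varphi$; this proves (i). Likewise $\coker\nabla_q$ is the completed span of the classes of $T^{pm-1}\,dT$ for $m\geq 1$, which form a topological $\bb Z_p[\zeta_p]$-basis, in particular a flat module. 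For (ii), one computes $\partial$ on the generators $\varphi(T^k)$ of $H^0$: lifting $T^{pk}$ to $\bb Z[T][[q-1]]/\Phi_p(q)^2$ and using the identity $[pk]_q=\Phi_p(q)\,[k]_{q^p}\equiv k\,\Phi_p(q)\pmod{\Phi_p(q)^2}$ yields $\partial(\varphi(T^k))=k\,T^{pk-1}\,dT$. This map is additive, kills $\bb Z_p[\zeta_p]$ (constants lift to constants, which are $\nabla_q$-closed), and, being a connecting map for the multiplicative short exact sequence of the statement, it is a derivation --- a formal consequence of the $q$-Leibniz rule modulo $\Phi_p(q)^2$, using that $\gamma_i\equiv\mathrm{id}$ modulo $(q-1)\Phi_p(q)$ on lifts of classes in $H^0$. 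By the universal property of the $p$-adically continuous K\"ahler differentials and the identification $\hat R[\zeta_p]=H^0(q\!\op-\!\Omega_R^\square/\Phi_p(q))$ from (i), $\partial$ induces the $\hat R[\zeta_p]$-linear map $\Omega^1_{R/\bb Z}\otimes_R\hat R[\zeta_p]\to H^1$ of (ii), which by the computation above carries the basis vector $T^{m-1}\,dT$ to $T^{pm-1}\,dT$ and is hence an isomorphism; for $i\geq 2$ both sides vanish. This settles $d=1$, and with it the proposition.

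\emph{The main point.} There is no serious obstruction here --- the proposition is a $q$-analogue of a standard computation. The care that is needed is (a) the bookkeeping justifying the \'etale-base-change and tensor-product reductions for these $q$-twisted complexes, whose differentials are only $\bb Z[[q-1]]$-linear and not $R[[q-1]]$-linear, and the handling of the $(q-1)$-adic and $p$-adic completions so that K\"unneth applies cleanly; and (b) the identity $[pk]_q=\Phi_p(q)\,[k]_{q^p}$, which is what makes the boundary map $\partial$ explicitly computable and exhibits it as the Frobenius-twisted exterior derivative.
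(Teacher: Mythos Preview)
Your proof is correct in outline and in the one-variable computation, but the \'etale reduction step is phrased imprecisely and, once made precise, collapses into the paper's own argument.

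\emph{The imprecision.} You write that the $q$-de~Rham complex modulo $\Phi_p(q)$ for $R$ is ``obtained from the corresponding data for $\bb Z[T_1,\ldots,T_d]$ by base change along $\widehat{\bb Z[T_1,\ldots,T_d]}[\zeta_p]\to\hat R[\zeta_p]$'', and then invoke flatness. But the differentials $\nabla_{q,i}$ are \emph{not} $\hat P[\zeta_p]$-linear (you note this yourself at the end), so the complex is not a complex of $\hat P[\zeta_p]$-modules, and ``flat base change of cohomology'' has no direct meaning. What \emph{is} true is that each $\gamma_i$ fixes the subring $\varphi(\hat P[\zeta_p])$ (one checks $\gamma_i\circ\varphi=\varphi$ on generators and invokes formal \'etaleness), so the differentials are $\varphi(\hat P[\zeta_p])$-linear. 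Combined with the Frobenius-basis identity $\hat R[\zeta_p]=\varphi(\hat R[\zeta_p])\otimes_{\varphi(\hat P[\zeta_p])}\hat P[\zeta_p]$, this shows the $R$-complex is the flat base change of the $P$-complex along $\varphi(\hat P[\zeta_p])\to\varphi(\hat R[\zeta_p])$, and then your reduction goes through.

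\emph{Comparison with the paper.} The paper bypasses both reductions: it works directly with an arbitrary framed $R$, uses the decomposition $\hat R[\zeta_p]=\bigoplus_{0\le a_i<p}\varphi(\hat R[\zeta_p])\,T^a$, observes that $\nabla_q$ respects it (exactly the $\varphi$-linearity above), and then notes that the $a\neq 0$ summands are acyclic Koszul complexes (the coefficients $[a_i]_{\zeta_p}$ being units for $a_i\neq 0$) while the $a=0$ summand has zero differential. Your K\"unneth step and one-variable computation amount to the same acyclicity check, and your \'etale reduction, once repaired, \emph{is} the Frobenius decomposition. So the two arguments are really the same proof in different packaging. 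What your write-up adds is the explicit treatment of part~(ii): the identity $[pk]_q=\Phi_p(q)\,[k]_{q^p}\equiv k\,\Phi_p(q)\pmod{\Phi_p(q)^2}$ making the Bockstein $\partial$ computable, and the verification of the derivation property, both of which the paper leaves implicit.
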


Note in particular that, as predicted by Conjecture~\ref{ConjA}, the left side is in all cases independent of $\square$.

\begin{proof} We give a sketch of the proof. The map $\varphi: \hat{R}\to \hat{R}$ makes $\hat{R}$ a free $\hat{R}$-module with basis given by $T_1^{a_1}\cdots T_d^{a_d}$ for $0\leq a_i\leq p-1$. This induces a similar direct sum decomposition of $q\!\op-\!\Omega^\bullet_{R[[q-1]]/\bb Z[[q-1]],\square}/\Phi_p(q)$; indeed, the direct sum decomposition of $\hat{R}$ is preserved by $\nabla_q$. Now one checks that all summands indexed by $a_i$ which are not all $0$ are acyclic. This can be done modulo $q-1$, where it reduces to a similar verification for $\Omega^\bullet_{(R/p)/\bb F_p}$. Thus, only the summand for $a_1=\ldots=a_d=0$ remains; but this summand has trivial differentials, which gives the required identifications.
\end{proof}

We note that this proposition provides a lift of the Cartier isomorphism to mixed characteristic: Recall that for any smooth $\bb F_p$-algebra $R_0$, there are canonical isomorphisms
\[
\Omega^i_{R_0/\bb F_p}\cong H^i(\Omega^\bullet_{R_0/\bb F_p})\ ,
\]
where the map is given by ``$\frac{\varphi}{p^i}$''. The proposition says that a similar result holds true over $\bb Z$, up to replacing the de~Rham complex by its $q$-deformation evaluated at $q=\zeta_p$. In particular, there is a tight relation between characteristic $p$ and $q$-deformations at $p$-th roots of unity, resembling a well-known phenomenon in the theory of quantum groups, cf.~\cite{Lusztig}.

\section{Known results}\label{sec:known}

In this section, we explain what is known. First, we note that the interesting things happen after $p$-adic completion. Indeed, after extending to $\bb Q[[q-1]]$, the $q$-de~Rham complex is quasi-isomorphic to the constant extension of the de~Rham complex:

\begin{lemma}\label{compqdRdR} Let $R$ be a smooth $\bb Z$-algebra with framing $\square$. Then there is a canonical isomorphism
\[
q\!\op-\!\Omega^\bullet_{R[[q-1]]/\bb Z[[q-1]]}\hat{\otimes}_{\bb Z[[q-1]]} \bb Q[[q-1]]\cong \Omega^\bullet_{R/\bb Z}\hat{\otimes}_{\bb Z} \bb Q[[q-1]]\ ,
\]
where both tensor products are $(q-1)$-adically completed.
\end{lemma}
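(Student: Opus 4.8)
In both complexes the degree-$i$ term is the same module, $\Omega^i_{R/\bb Z}\hat{\otimes}_{\bb Z}\bb Q[[q-1]]$; only the differential changes, from $\nabla_q$ on the left to the de~Rham differential $\nabla$ on the right. So the plan is to write down an explicit isomorphism of complexes $A^\bullet$, equal to the identity modulo $q-1$, intertwining $\nabla_q$ and $\nabla$. What makes this possible, and what forces the $(q-1)$-adic completion, is that over $\bb Q[[q-1]]$ the difference operator $\gamma_i\colon T_i\mapsto qT_i$ becomes an exponential of a derivation, so that the ``$q$-correction'' to $\nabla$ is governed by invertible power series in $q-1$ with $\bb Q$-coefficients.

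\textbf{Setup.} Fix the framing $\square$. Since $\square$ is \'etale, $R$ carries commuting derivations $\partial_1,\dots,\partial_d$ with $\partial_iT_j=\delta_{ij}$; put $\theta_i:=T_i\partial_i$, which again commute. Let $u:=\log q=\sum_{n\geq1}(-1)^{n-1}(q-1)^n/n\in(q-1)\bb Q[[q-1]]$, so that $\bb Q[[q-1]]=\bb Q[[u]]$ and $q-1=e^u-1$. Because $\theta_i$ is a derivation and $u$ is topologically nilpotent, $\exp(u\theta_i)$ is a $\bb Q[[q-1]]$-algebra automorphism of $R\hat{\otimes}_{\bb Z}\bb Q[[q-1]]$ sending $T_i$ to $qT_i$ and fixing $T_j$ for $j\neq i$; by the uniqueness of the lift of $\gamma_i$ along the formally \'etale map $\bb Z[T_1,\dots,T_d][[q-1]]\to R[[q-1]]$, it agrees with $\gamma_i$ after $\otimes\bb Q$. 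Set $g(x):=(e^x-1)/x=\sum_{n\geq0}x^n/(n+1)!$ and $c:=1/g(u)=(\log q)/(q-1)$, a unit of $\bb Q[[q-1]]$ congruent to $1$ modulo $q-1$. The identity $\gamma_i-1=e^{u\theta_i}-1=u\theta_i\,g(u\theta_i)$ together with $\theta_i(h)=T_i\partial_i(h)$ gives the basic formula
\[
\frac{\gamma_i(f)-f}{qT_i-T_i}=c\cdot\partial_i\bigl(g(u\theta_i)(f)\bigr),\qquad\text{whence}\qquad \nabla_q(f\,dT_J)=c\sum_{k=1}^{d}\partial_k\bigl(g(u\theta_k)(f)\bigr)\,dT_k\wedge dT_J\ .
\]
Every operator $g(u\theta_i)$ --- and every operator appearing below --- is well defined on the $(q-1)$-adically complete module $\Omega^i_{R/\bb Z}\hat{\otimes}_{\bb Z}\bb Q[[q-1]]$, because its coefficient of $u^n$ is $1/(n+1)!$, invertible over $\bb Q$, times a polynomial operator; this is precisely where the completion and the passage to $\bb Q$-coefficients enter.

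\textbf{The isomorphism.} The relation $[\partial_i,\theta_i]=\partial_i$ yields $\partial_i\circ h(\theta_i)=h(\theta_i+1)\circ\partial_i$ for every power series $h$, in particular $\partial_i\circ g(u\theta_i)=g(u(\theta_i+1))\circ\partial_i$. I then build $A^\bullet$ degree by degree: on the summand $(R\hat{\otimes}_{\bb Z}\bb Q[[q-1]])\,dT_J$ of $\Omega^s_{R/\bb Z}\hat{\otimes}_{\bb Z}\bb Q[[q-1]]$, for $J=\{j_1<\dots<j_s\}$ and $dT_J=dT_{j_1}\wedge\dots\wedge dT_{j_s}$, let $A^s$ act by multiplication by the operator
\[
c^{-s}\prod_{j\in J}g\bigl(u(\theta_j+1)\bigr)^{-1}\ .
\]
Each $g(u(\theta_j+1))$ is congruent to $1$ modulo $q-1$, hence invertible by a Neumann series on the complete module, and the factors for distinct $j$ commute; since $A^s$ preserves the grading by the $dT_J$, it is a $\bb Q[[q-1]]$-linear isomorphism, equal to the identity at $q=1$. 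Substituting the formula above for $\nabla_q$ into the required identity $\nabla\circ A^s=A^{s+1}\circ\nabla_q$ and applying the shift relation, one sees that the factor $g(u(\theta_k+1))$ produced by moving $\partial_k$ past $g(u\theta_k)$ cancels the matching factor in $A^{s+1}$, while the single $c$ in $\nabla_q$ absorbs the change of normalisation from $c^{-s}$ to $c^{-(s+1)}$, so the identity holds summand-by-summand. Thus $A^\bullet$ is an isomorphism of complexes
\[
q\!\op-\!\Omega^\bullet_{R[[q-1]]/\bb Z[[q-1]]}\hat{\otimes}_{\bb Z[[q-1]]}\bb Q[[q-1]]\ \Isoto\ \Omega^\bullet_{R/\bb Z}\hat{\otimes}_{\bb Z}\bb Q[[q-1]]\ ,
\]
canonical once $\square$ is fixed.

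\textbf{Main obstacle.} With the setup above the verification is essentially bookkeeping; the two genuinely delicate points are the non-commutation of $\partial_i$ and $\theta_i$, which is what dictates the shift ($\theta_i$ replaced by $\theta_i+1$) in $A^\bullet$ and which, if mishandled, produces a map off by a unit rather than an honest intertwiner, and the verification that all the exponential and logarithmic power-series operators converge and are invertible on the completed modules --- exactly the feature that makes the statement true over $\bb Q[[q-1]]$ while having no analogue over $\bb Z[q]$ or $\bb Q[q]$.
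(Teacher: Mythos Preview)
Your proof is correct and follows essentially the same approach as the paper. Your key identity $\nabla_{q,i}=c\cdot\partial_i\circ g(u\theta_i)$ is exactly the Taylor expansion the paper records (written there as $\nabla_{q,i}=\sum_{n\ge1}\tfrac{\log(q)^n}{n!(q-1)}\,\partial_i\theta_i^{\,n-1}$, citing \cite[Lemma 12.3]{BMS}), and your explicit degreewise intertwiner $A^s=c^{-s}\prod_{j\in J}g(u(\theta_j+1))^{-1}$ is the ``build an isomorphism'' step the paper defers to \cite[Corollary 12.4]{BMS}; you have simply unwound that reference in full.
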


In the case $R=\bb Z[T]$, this reduces to the observation that $[n]_q$ and $n$ differ (multiplicatively) by a unit in $\bb Q[[q-1]]$.

\begin{proof} Over $\bb Q[[q-1]]$, one can write down a Taylor series expressing the $q$-derivative $\nabla_q$ in terms of the usual derivative. Let $\nabla_{q,i}$ denote the $i$-th $q$-derivative, and $\nabla_i$ the $i$-th derivative. Then
\[
\nabla_{q,i} = \sum_{n\geq 1} \frac{\log(q)^n}{n!(q-1)}\underbrace{\nabla_i(T_i\nabla_i(\cdots (T_i\nabla_i)\cdots ))}_{n\ {\rm occurences\ of}\ \nabla_i}\ ,
\]
cf.~\cite[Lemma 12.3]{BMS}. Using this, one can build an isomorphism as in the lemma, cf.~\cite[Corollary 12.4]{BMS}.
\end{proof}

Now fix a prime number $p$. Then the $p$-adic completion of $q\!\op-\!\Omega_{R/\bb Z}$ should depend only on the $p$-adic completion $\hat{R}$ of $R$.

The results of \cite{BMS} are in a slightly different setting. Namely, we fix a complete algebraically closed extension $C$ of $\bb Q_p$, for example $C=\bb C_p$, and let $\roi = \roi_C$ be its ring of integers. Fontaine associated with $C$ the ring $A_\mathrm{inf}$ defined as $A_\mathrm{inf} = W(\roi^\flat)$, where $\roi^\flat = \varprojlim_\phi \roi/p$ is the ``tilt'' of $\roi$. Then $\roi^\flat$ is a perfect ring of characteristic $p$, and is the ring of integers in a complete algebraically closed field $C^\flat$ of characteristic $p$. In particular, the Frobenius $\phi$ of $\roi^\flat$ is an automorphism, and induces by functoriality an automorphism $\phi$ of $A_\mathrm{inf}$. There is a natural surjective map $\theta: A_\mathrm{inf}\to \roi$ whose kernel is generated by a non-zero-divisor $\xi\in A_\mathrm{inf}$.

In fact, if one chooses a system of primitive $p$-power roots of unity $1,\zeta_p,\zeta_{p^2},\ldots\in \roi$, these induce an element $\epsilon=(1,\zeta_p,\zeta_{p^2},\ldots)\in \varprojlim_\phi \roi/p= \roi^\flat$ with Teichm\"uller lift $[\epsilon]\in W(\roi^\flat)=A_\mathrm{inf}$, and one can choose $\xi = \frac{[\epsilon]-1}{[\epsilon^{1/p}]-1}$. Moreover, one gets a map $\bb Z_p[[q-1]]\to A_\mathrm{inf}$ sending $q$ to $[\epsilon]$, making $A_\mathrm{inf}$ a faithfully flat $\bb Z_p[[q-1]]$-algebra. Note that this map connects the formal variable $q$ from above with the roots of unity in algebraic extensions of $\bb Z_p$; this indicates that the $q$-deformation of this paper reflects some inner arithmetic of $\bb Z$.

Let us call a $p$-adically complete $\roi$-algebra $S$ smooth if it is the $p$-adic completion of some smooth $\roi$-algebra; equivalently, by a theorem of Elkik, \cite{Elkik}, if $S$ is a $p$-adically complete flat $\roi$-algebra such that $S/p$ is smooth over $\roi/p$. In this situation, one can define a variant of the $q$-de~Rham complex, as follows.

Fix a framing $\square: \roi\langle T_1,\ldots,T_d\rangle\to S$; here, $\roi\langle T_1,\ldots,T_d\rangle$ denotes the $p$-adic completion of $\roi[T_1,\ldots,T_d]$, and $\square$ is assumed to be \'etale (in the sense that it is flat and \'etale modulo $p$). In this situation, the analogue of $\bb Z[T_1,\ldots,T_d][[q-1]]$ is played by the $(p,\xi)$-adic completion $A_\inf\langle T_1,\ldots,T_d\rangle$ of $A_\inf[T_1,\ldots,T_d]$; effectively, we replace the surjection $\bb Z[[q-1]]\to \bb Z$ by $\theta: A_\inf\to \roi$. Now, for $i=1,\ldots,d$, we have the automorphism $\gamma_i$ of $A_\inf\langle T_1,\ldots,T_d\rangle$ sending $T_i$ to $[\epsilon]T_i$, and $T_j$ to $T_j$ for $j\neq i$. The \'etale map $\square: \roi\langle T_1,\ldots,T_d\rangle\to S$ deforms uniquely to an \'etale map $A_\inf\langle T_1,\ldots,T_d\rangle\to A(S)^\square$, and the automorphisms $\gamma_i$ lift uniquely to automorphisms of $A(S)^\square$. In this situation, we have the $q$-de~Rham complex, where $q=[\epsilon]$,
\[
q\!\op-\!\Omega^\bullet_{A(S)^\square/A_\inf} = A(S)^\square\buildrel\nabla_q\over\to \Omega^1_{A(S)^\square/A_\inf}\to\ldots\to \Omega^d_{A(S)^\square/A_\inf}\ ;
\]
here, all $\Omega^i$ are understood to be continuous K\"ahler differentials. As before,
\[
\nabla_q(f) = \sum_{i=1}^d \frac{\gamma_i(f)-f}{qT_i-T_i} dT_i
\]
denotes the $q$-derivative.

\begin{theorem}[\cite{BMS}]\label{thmA} There is a functor $S\mapsto A\Omega_S$ from the category of $p$-adically complete smooth $\roi$-algebras $S$ to the $\infty$-category of $E_\infty$-$A_\inf$-algebras, such that for any choice of framing $\square$ sending $T_i$ to an invertible function in $S$, $A\Omega_S$ is computed by $q\!\op-\!\Omega^\bullet_{A(S)^\square/A_\inf}$.
\end{theorem}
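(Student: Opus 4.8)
The plan is to construct the functor $S \mapsto A\Omega_S$ by descent from the étale-local models $q\text{-}\Omega^\bullet_{A(S)^\square/A_\inf}$, reducing the coordinate-independence to the more tractable statement of Proposition~\ref{propcartier}-type computations. First I would establish the key local computation: for a framing $\square$ in which all $T_i$ are sent to invertible functions, there is a canonical identification of the cohomology of $q\text{-}\Omega^\bullet_{A(S)^\square/A_\inf}$ modulo $\xi$ with the continuous de~Rham complex $\Omega^\bullet_{\hat S/\roi}$ (and modulo $\tilde\xi = \phi^{-1}(\xi)$ with the Frobenius-twisted de~Rham complex), via a Cartier-type isomorphism exactly as in the sketch of Proposition~\ref{propcartier}: the Frobenius lift on $A(S)^\square$ splits off an acyclic direct summand, leaving the summand with trivial differential. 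This is the technical heart, and — since $S$ here is a (highly ramified) $\roi$-algebra rather than a $\bb Z$-algebra — one must check that the Cartier isomorphism over $\roi/p$ (i.e.\ the relative Cartier isomorphism of $\hat S/p$ over $\roi/p$) still holds; this is standard for smooth algebras over any $\bb F_p$-base, so this causes no new difficulty.

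Next I would promote this to a statement about the complex $q\text{-}\Omega^\bullet_{A(S)^\square/A_\inf}$ itself, not just its reduction mod $\xi$: using $(p,\xi)$-adic (derived) completeness, knowing the cohomology sheaves are finite projective modulo $\xi$ lets one conclude that the cohomology is $\xi$-torsion-free and that the complex is a perfect $A_\inf$-complex concentrated in the expected degrees, with Frobenius-semilinear structure. Then, to define the functor, I would localize: for $S$ without a global invertible framing, cover $\Spf S$ by opens admitting such framings, form the complexes $q\text{-}\Omega^\bullet_{A(U)^\square/A_\inf}$ on the pieces, and show the resulting objects glue. The cleanest way to do this is to realize $A\Omega_S$ as $R\nu_*$ of a sheaf on a suitable site (a pro-étale or "$q$-crystalline" site) whose value on a framed affine recovers $q\text{-}\Omega^\bullet_{A(S)^\square/A_\inf}$; then functoriality and the $E_\infty$-structure are automatic, and the identification with the explicit complex becomes a local comparison, which the Cartier computation above reduces to a mod-$\xi$ (hence characteristic-$p$) check.

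The main obstacle, and where the real work lies, is establishing the local quasi-isomorphism between $q\text{-}\Omega^\bullet_{A(S)^\square/A_\inf}$ and the site-theoretic $R\nu_*$ for two different framings: one needs that $A(S)^\square$ for varying $\square$, together with the lifted $\gamma_i$, computes the same object. This is where perfectoid technology enters — one passes to the perfectoid cover obtained by adjoining all $p$-power roots $T_i^{1/p^\infty}$, where the $\gamma_i$-action becomes an honest (profinite) group action by $\bb Z_p(1)^d$, computes $q\text{-}\Omega^\bullet$ as continuous group cohomology à la Koszul complex for $\gamma_i - 1$, and then invokes the almost purity theorem to control the cohomology of the cover and its descent. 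Comparing two framings then amounts to comparing two such group-cohomology computations over a common perfectoid cover, and the almost vanishing of the relevant cohomology (an "almost" statement, with the error killed by a power of $\mu = [\epsilon]-1$) is precisely what makes the canonical quasi-isomorphism exist. I expect the bookkeeping of these almost-isomorphisms, and the verification that the glued object carries a genuine (not merely almost) $E_\infty$-$A_\inf$-algebra structure, to be the most delicate part; the rest is formal once the local Cartier computation and the perfectoid descent are in hand.
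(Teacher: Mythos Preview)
Your instinct to pass to perfectoid covers, compute group cohomology via Koszul complexes, and invoke almost purity is exactly the paper's strategy; but you are missing the one technical device that makes the argument go through, and without it the proof does not close.

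The gap is this: almost purity gives you only that the relevant comparison maps (between $R\Gamma(\Gamma,A_\inf(S_\infty))$, $R\Gamma(\Gamma,A(S)^\square)$, and $R\Gamma(\Delta,A_\inf(\bar S))$) have kernel and cokernel on cohomology killed by powers of $q-1=[\epsilon]-1$. You note this yourself (``an `almost' statement, with the error killed by a power of $\mu$''), but an almost quasi-isomorphism is not a quasi-isomorphism, and nothing in your outline explains how to upgrade it. The paper's answer is the Berthelot--Ogus d\'ecalage functor $L\eta_{q-1}$: one \emph{defines} $A\Omega_S := L\eta_{q-1} R\Gamma(\Delta,A_\inf(\bar S))$, which is visibly coordinate-free, and then proves (Theorem~\ref{thmstrong}) that applying $L\eta_{q-1}$ to each of the three group-cohomology complexes kills the $(q-1)$-torsion discrepancies and yields honest quasi-isomorphisms, with the framed $q$-de~Rham complex appearing as $L\eta_{q-1}$ of the Koszul complex for $\Gamma$ acting on $A(S)^\square$. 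This is the new idea of \cite{BMS} over Faltings's earlier computations, and it is what your proposal lacks.

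Two secondary points. First, your plan to realize $A\Omega_S$ as $R\nu_*$ on a ``$q$-crystalline'' site is exactly what the paper says it does \emph{not} know how to do (see the introduction: ``we do not know an analogue of either the de~Rham--Witt complex or the crystalline site''); the direct $L\eta$ definition sidesteps this entirely. Second, the Cartier-type computation mod $\Phi_p(q)$ (Proposition~\ref{propcartier}) is a consequence and sanity check, not the mechanism for coordinate-independence: knowing the cohomology sheaves mod $\xi$ are coordinate-free does not by itself produce a functorial complex, and trying to glue the framed complexes directly from this runs into the same derived-category gluing problems the introduction warns about.
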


It should be possible to remove the assumption that $\square$ sends $T_i$ to an invertible function by using the $v$-topology from~\cite{ScholzeLectureNotes} in place of the pro-\'etale topology employed in \cite{BMS}.

This proves in particular that Conjecture~\ref{ConjA} holds true after (completed) base change to $A_\inf$. We expect the following compatibility:

\begin{conjecture}\label{ConjC} Let $R$ be a smooth $\bb Z$-algebra, and let $R_\roi$ be the $p$-adic completion of $R\otimes_{\bb Z} \roi$. Then, for the $q$-de~Rham complex $q\!\op-\!\Omega_R$ given by Conjecture~\ref{ConjA}, one has (functorially in $R$)
\[
q\!\op-\!\Omega_R\hat{\otimes}_{\bb Z[[q-1]]} A_\inf\cong A\Omega_{R_\roi}\ ,
\]
where the tensor product is $(p,\xi)$-adically completed, and $q\mapsto [\epsilon]$.\footnote{This map $\bb Z[[q-1]]\to A_\inf$ depends on a choice of $p$-power roots of unity, while the right-hand side does not. We expect that $q\!\op-\!\Omega_R$ admits semilinear operations, detailed in Section~\ref{sec:operations} below, that show directly that this base change is canonically independent of the choice.}
\end{conjecture}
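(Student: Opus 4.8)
The plan is to construct the comparison first after fixing a framing with invertible coordinates, where it reduces to a base-change computation, and then to upgrade it to a choice-free functorial equivalence by matching the two sides at the level of their site-theoretic constructions.

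\textbf{Step 1: the framed comparison.} Zariski-locally on $\Spec R$, choose a framing $\square\colon \bb Z[T_1,\ldots,T_d]\to R$ sending each $T_i$ to an invertible function. Then Conjecture~\ref{ConjA} computes $q\!\op-\!\Omega_R$ by $q\!\op-\!\Omega^\bullet_{R[[q-1]]/\bb Z[[q-1]],\square}$ and Theorem~\ref{thmA} computes $A\Omega_{R_\roi}$ by $q\!\op-\!\Omega^\bullet_{A(R_\roi)^\square/A_\inf}$. The map $\bb Z[[q-1]]\to A_\inf$, $q\mapsto[\epsilon]$, sits in a commutative square with $\bb Z[T_1,\ldots,T_d][[q-1]]\to A_\inf\langle T_1,\ldots,T_d\rangle$, $T_i\mapsto T_i$; since $q-1\mapsto[\epsilon]-1$ lies in $\ker\theta$, reduction along $\theta$ recovers $\bb Z[T_1,\ldots,T_d]\to\roi\langle T_1,\ldots,T_d\rangle$. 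Hence the formally \'etale $\bb Z[T_1,\ldots,T_d][[q-1]]$-algebra $R[[q-1]]$ base-changes, after $(p,\xi)$-adic completion, to a formally \'etale $A_\inf\langle T_1,\ldots,T_d\rangle$-algebra that reduces modulo $\xi$ to $R_\roi$, hence equals $A(R_\roi)^\square$ by uniqueness of \'etale lifts. The automorphisms $\gamma_i$ and the operator $\nabla_q$ are manifestly compatible with this base change (one has $[n]_q\mapsto[n]_{[\epsilon]}$ and $qT_i-T_i\mapsto[\epsilon]T_i-T_i$), so one obtains
\[
q\!\op-\!\Omega^\bullet_{R[[q-1]]/\bb Z[[q-1]],\square}\,\hat{\otimes}_{\bb Z[[q-1]]}A_\inf\;\cong\;q\!\op-\!\Omega^\bullet_{A(R_\roi)^\square/A_\inf}
\]
as complexes, naturally in morphisms of $\square$-framed smooth $\bb Z$-algebras; combined with the two computations this yields $q\!\op-\!\Omega_R\,\hat{\otimes}_{\bb Z[[q-1]]}A_\inf\simeq A\Omega_{R_\roi}$ for each fixed framing.

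\textbf{Step 2: removing the framing and globalizing.} One must show the equivalence of Step~1 is independent of $\square$ modulo the canonical identifications furnished by the functors $q\!\op-\!\Omega_{(-)}$ and $A\Omega_{(-)}$, and then glue over $\Spec R$. The natural route is to build $q\!\op-\!\Omega_R$ by a site-theoretic construction over $\Spec\bb Z$ that base-changes termwise to that of \cite{BMS}, where $A\Omega_S=L\eta_\mu\,R\nu_*\hat{\bb A}_\inf$ for $\nu$ the projection from a pro-\'etale (or $v$-) site to the Zariski site, and the framed complex arises because on a framed chart $R\nu_*\hat{\bb A}_\inf$ is the continuous cohomology of $\bb Z_p(1)^d$ acting through the $\gamma_i$, a Koszul complex whose d\'ecalage $L\eta_\mu$ is the $q$-de~Rham complex. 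If $q\!\op-\!\Omega_R$ admits an analogous description as $L\eta_{q-1}$ of the cohomology of a sheaf of ``$\bb Z[[q-1]]$-period rings'' with the same $\gamma_i$-action, then --- since $q-1\mapsto\mu=[\epsilon]-1$ --- base change along $\bb Z[[q-1]]\to A_\inf$ carries this construction to the one of \cite{BMS} compatibly with the d\'ecalage, and Conjecture~\ref{ConjC} becomes essentially definitional; the independence of the choice of $p$-power roots of unity flagged in the footnote would then be witnessed by the Frobenius- and $\gamma$-type operators of Section~\ref{sec:operations}, which exist on both sides and are preserved by the base change.

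\textbf{The main obstacle.} Step~2 is the crux, and it is not genuinely separable from Conjecture~\ref{ConjA}: there is no known site over $\Spec\bb Z$ --- no analogue of the crystalline site or of the de~Rham--Witt complex --- from which $q\!\op-\!\Omega_R$ could be built, so there is nothing to match against the construction of \cite{BMS}. Even the framing-independence needed merely to glue Step~1 over a Zariski cover already requires knowing that the canonical identifications between differently framed $q$-de~Rham complexes provided by Conjecture~\ref{ConjA} base-change compatibly to those provided by Theorem~\ref{thmA}, which is essentially the entire content. Thus proving Conjecture~\ref{ConjC} in its functorial form demands the same new input as Conjecture~\ref{ConjA} itself; the expectation is that the semilinear operators of Section~\ref{sec:operations}, together with their counterparts on the $A_\inf$ side, both rigidify the comparison and explain why it does not depend on the $p$-power roots of unity underlying the map $\bb Z[[q-1]]\to A_\inf$.
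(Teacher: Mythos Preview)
The statement is a \emph{conjecture}, and the paper does not prove it. Immediately after stating it, the paper only remarks: ``We note that this is clear after the choice of coordinates; the conjecture says that the isomorphism commutes with coordinate transformations.'' Your Step~1 is exactly this observation --- the framed base-change computation is elementary --- and your Step~2 together with the ``main obstacle'' paragraph correctly identifies that the functorial compatibility with coordinate changes is the entire content of the conjecture, and is inseparable from Conjecture~\ref{ConjA} itself. So your assessment aligns with the paper's: there is at present no site-theoretic or de~Rham--Witt-type construction of $q\!\op-\!\Omega_R$ over $\bb Z[[q-1]]$ against which the $A_\inf$-construction of \cite{BMS} could be matched, and hence no proof.
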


We note that this is clear after the choice of coordinates; the conjecture says that the isomorphism commutes with coordinate transformations. Using the comparison between \'etale and singular cohomology, the next theorem states that Conjecture~\ref{ConjB} holds true in this context:

\begin{theorem}\label{thmB} Let $X$ be a proper smooth (formal) scheme over $\roi$ with (rigid-analytic) generic fibre $X_C$ over $C$. The hypercohomology groups
\[
H^i_{A_\inf}(X) := \bb H^i(X,A\Omega_X)
\]
are finitely presented $A_\inf$-modules, and after inverting $q-1=[\epsilon]-1$, there is a canonical isomorphism
\[
H^i_{A_\inf}(X)[\tfrac 1{[\epsilon]-1}]\cong H^i_\sub{\'et}(X_C,\bb Z_p)\otimes_{\bb Z_p} A_\inf[\tfrac 1{[\epsilon]-1}]\ .
\]
\end{theorem}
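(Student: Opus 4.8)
The plan is to deduce Theorem~\ref{thmB} from Theorem~\ref{thmA} together with the comparison theorems of $p$-adic Hodge theory. First I would record the structural properties of $A\Omega_X$ that are needed: since $X$ is proper and $A\Omega_X$ is by construction (via Theorem~\ref{thmA}) a complex computed locally by the explicit $q$-de~Rham complexes $q\!\op-\!\Omega^\bullet_{A(S)^\square/A_\inf}$, the hypercohomology $H^i_{A_\inf}(X)$ is a perfect complex of $A_\inf$-modules after derived $(p,\xi)$-completion, hence each cohomology group is a finitely presented $A_\inf$-module. This uses that $A_\inf$ is, although not Noetherian, coherent enough for the relevant finiteness statements, and that $X$ admits a finite cover by framed affines so that the Čech-to-derived-functor spectral sequence assembles the local complexes into a bounded complex of finitely generated modules.

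Next I would identify the generic fibre. The key input is that, after inverting $q-1 = [\epsilon]-1$, the $q$-de~Rham complex degenerates: one expects $A\Omega_X[\tfrac1{[\epsilon]-1}]$ to compute a form of pro-\'etale (equivalently, \'etale) cohomology of $X_C$. Concretely, I would invoke the description in~\cite{BMS} of $A\Omega_S$ as $L\eta_\mu$ applied to a Koszul-type complex computing $R\Gamma_{\mathrm{pro\acute et}}(\Spf S_C, \widehat{\roi}_{X_C}^+)\otimes A_\inf$, where $\mu = [\epsilon]-1$; after inverting $\mu$ the décalage functor $L\eta_\mu$ becomes harmless (it is the identity up to canonical isomorphism on complexes where $\mu$ is invertible), so $A\Omega_X[\tfrac1\mu]$ is identified with $R\Gamma_{\mathrm{pro\acute et}}(X_C,\widehat{\roi}_{X_C}^+)\otimes_{\bb Z_p} A_\inf[\tfrac1\mu]$. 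Then the primitive comparison theorem (Scholze, \cite{scholzepadichodge}) gives $R\Gamma_{\mathrm{pro\acute et}}(X_C,\widehat{\roi}_{X_C}^+)\cong R\Gamma_{\mathrm{\acute et}}(X_C,\bb Z_p)\otimes^{\bb L}_{\bb Z_p}\roi$ up to a bounded $p^\infty$-torsion error, and since $H^i_{\mathrm{\acute et}}(X_C,\bb Z_p)$ is a finitely generated $\bb Z_p$-module, tensoring up to $A_\inf[\tfrac1\mu]$ — which kills the torsion error because $p$ divides a power of $\mu$ up to a unit in $A_\inf[\tfrac1\mu]$, indeed $\xi\,\mathrm{divides}\,\mu$ suitably — yields the stated canonical isomorphism $H^i_{A_\inf}(X)[\tfrac1{[\epsilon]-1}]\cong H^i_{\mathrm{\acute et}}(X_C,\bb Z_p)\otimes_{\bb Z_p}A_\inf[\tfrac1{[\epsilon]-1}]$.

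I expect the main obstacle to be twofold. The first genuinely hard point is the finiteness statement: $A_\inf$ is not Noetherian, so one cannot argue naively, and the proof in~\cite{BMS} proceeds by a delicate "local-global" argument — showing the complexes $A\Omega_S$ are perfect, controlling the behavior of perfect complexes under the various completions, and then using properness to conclude that the global cohomology is finitely presented. This is where most of the technical weight of~\cite{BMS} lies, and I would simply cite it. The second point, more conceptual, is that the identification of $A\Omega_X[\tfrac1\mu]$ with pro-\'etale cohomology requires knowing that the local $q$-de~Rham complexes glue in a way compatible with the pro-\'etale (or $v$-) topology: this is exactly the content of Theorem~\ref{thmA} combined with the computation that the pro-\'etale cohomology of an affinoid perfectoid cover is computed by the relevant continuous group cohomology of $(\widehat{\bb Z}_p(1))^d$ acting via the $\gamma_i$, i.e.\ by a Koszul complex whose $L\eta_\mu$-twist is $A\Omega_S$. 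The rest — unwinding $L\eta_\mu$ after inverting $\mu$, and invoking the primitive comparison — is formal.

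Finally, I would note for completeness that canonicity of the isomorphism follows because both sides are defined functorially (the left from Theorem~\ref{thmA}, the right from \'etale cohomology) and the comparison map of~\cite{scholzepadichodge} is itself canonical; one does not need the structure theorem over a PID here, in contrast to the purely $\bb Z[[q-1]]$-valued Conjecture~\ref{ConjB}, precisely because $A_\inf$ carries the extra perfectoid structure that rigidifies the comparison.
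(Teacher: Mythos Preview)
Your strategy coincides with the paper's own brief sketch: the paper deduces the theorem from the definition $A\Omega_S = L\eta_{q-1} R\Gamma(\Delta, A_\inf(\bar{S}))$ together with the Faltings--Scholze ``primitive comparison'' (\'etale cohomology with coefficients in the relative $A_\inf$-construction is almost isomorphic to \'etale cohomology with constant coefficients $A_\inf$), observes that $L\eta_\mu$ is harmless once $\mu$ is inverted, and refers to~\cite{BMS} for the finiteness over the non-Noetherian ring $A_\inf$. So the approach is the same.

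A few of your details are inaccurate, though. The sheaf whose pro-\'etale cohomology is at stake is the relative period sheaf $W(\hat{\cal O}_X^{\flat+})$, not $\hat{\cal O}_X^+\otimes A_\inf$; these are genuinely different. The defect in the primitive comparison is not ``bounded $p^\infty$-torsion'' but \emph{almost zero}, i.e.\ annihilated by $W(\frak m^\flat)$. And your claim that ``$p$ divides a power of $\mu$ in $A_\inf[\tfrac1\mu]$'' is false; $p$ and $\mu$ are not comparable in that way. The correct reason the almost-zero defect vanishes after inverting $\mu=[\epsilon]-1$ is simply that $\mu$ itself lies in $W(\frak m^\flat)$, so any module annihilated by that ideal is in particular annihilated by $\mu$, hence dies upon inverting it. (Also, $A_\inf$ is not known to be coherent; the finiteness in~\cite{BMS} proceeds by other means, as you in effect concede.) With these corrections your outline is sound and matches the paper.
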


In particular, Conjecture~\ref{ConjB} follows from Conjecture~\ref{ConjC}. In our presentation, this theorem may be the most surprising, as it claims a direct relation between ($q$-)differentials and \'etale cohomology, which is different from the comparison isomorphism over the complex numbers coming from integration of differential forms along cycles. In the next section, we sketch the proofs of these theorems.

\section{The key ideas}\label{sec:ideas}

As in the previous section, let $\roi$ be the ring of integers in a complete and algebraically closed extension $C$ of $\bb Q_p$. Let $S$ be a $p$-adically complete smooth $\roi$-algebra as in the previous section. The construction of $A\Omega_S$ relies on a relative version of the construction of $A_\inf$, which was used extensively by Faltings, e.g.~in~\cite{FaltingsAlmostEtale}.

Assume that there is an \'etale map $\square: \roi[T_1,\ldots,T_d]\to S$ sending all $T_i$ to invertible elements. Moreover, we assume that $\Spec S$ is connected. Let $\bar{S}$ be the integral closure of $S$ in the maximal pro-finite \'etale extension of $S[\tfrac 1p]$. Then (the $p$-adic completion $\hat{\bar{S}}$ of) $\bar{S}$ is ``perfectoid'', meaning (essentially) that Frobenius induces an isomorphism between $\bar{S}/p^{1/p}$ and $\bar{S}/p$. Note that extracting all $p$-power roots of all $T_i$ defines a pro-finite \'etale covering of $S[\tfrac 1p]$; let $S_\infty\subset \bar{S}$ be the integral closure of $S$ in $S[\tfrac 1p,T_1^{1/p^\infty},\ldots,T_d^{1/p^\infty}]$. Then already (the $p$-adic completion $\hat{S}_\infty$ of) $S_\infty$ is perfectoid. Note that $\bar{S}$ is canonical, while $S_\infty$ depends on the choice of coordinates. Also, both extensions $\bar{S}[\tfrac 1p]/S[\tfrac 1p]$ and $S_\infty[\tfrac 1p]/S[\tfrac 1p]$ are Galois; let $\Delta\rightarrow \Gamma\cong \bb Z_p^d$ be their respective Galois groups.

The following deep theorem, known as Faltings's almost purity theorem, is critical to the proof.

\begin{theorem}[{\cite[Theorem 3.1]{FaltingsJAMS}}]\label{thmalmostpurity} The extension $\bar{S}/S_\infty$ is ``almost finite \'etale''. In particular, the map of Galois cohomology groups
\[
H^i(\Gamma,S_\infty)\to H^i(\Delta,\bar{S})
\]
is an almost isomorphism, i.e.~the kernel and cokernel are killed by $p^{1/n}$ for all $n\geq 1$.
\end{theorem}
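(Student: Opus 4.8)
The plan is to first show that $\bar{S}/S_\infty$ is almost finite \'etale, and then to deduce the comparison of Galois cohomology groups formally by almost \'etale descent. For the first part, the crucial point is that, after $p$-adic completion, $S_\infty$ is a \emph{perfectoid} ring: having adjoined all $p$-power roots of the coordinates $T_1,\ldots,T_d$ coming from the framing $\square$, one checks directly that Frobenius induces an isomorphism $\hat{S}_\infty/p^{1/p}\isoto\hat{S}_\infty/p$ --- this is essentially immediate for the torus $\roi\langle T_i^{\pm 1}\rangle$ itself, and the general case follows since $S$ is \'etale over the torus. Since $S_\infty[\tfrac1p]/S[\tfrac1p]$ is itself pro-finite \'etale, $\bar{S}[\tfrac1p]$ is also the maximal pro-finite \'etale extension of $S_\infty[\tfrac1p]$, hence a filtered colimit of finite \'etale $S_\infty[\tfrac1p]$-algebras $B$.

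The heart of the matter is the almost purity theorem in the form: for the perfectoid algebra $A=\hat{S}_\infty$ and any finite \'etale $A[\tfrac1p]$-algebra $B$, the algebra $B$ is again perfectoid, and the integral closure $B^+$ of $A$ in $B$ is almost finite \'etale over $A$. I would prove this via the tilting equivalence: passing to the tilt $A^\flat$, a perfectoid ring of characteristic $p$, finite \'etale covers of $A[\tfrac1p]$ correspond bijectively to those of $A^\flat[\tfrac1\varpi]$, and in characteristic $p$ the desired almost \'etaleness on the integral level is comparatively elementary --- the rings are perfect, so no almost ramification is possible (concretely, the relevant differentials and cotangent complexes vanish almost, and the diagonal idempotent of an \'etale cover lifts almost to the integral level). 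Untilting transports the conclusion back to $A$. Taking the filtered colimit over all $B$ and $p$-completing then shows that $\hat{\bar{S}}$ is perfectoid and $\hat{S}_\infty\to\hat{\bar{S}}$ is almost finite \'etale; as all rings in sight are $p$-torsion free and $p$-adically separated, the statement for $\bar{S}/S_\infty$ follows in the appropriate almost sense. (Faltings's original proof instead runs an induction on the dimension $d$, reducing ramification along the boundary divisors to Abhyankar's lemma; the perfectoid argument streamlines this.)

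For the Galois cohomology claim, write $1\to H\to\Delta\to\Gamma\to 1$ with $H=\Gal(\bar{S}[\tfrac1p]/S_\infty[\tfrac1p])$, so $\bar{S}^H=S_\infty$. Almost \'etale descent applied to the almost finite \'etale $H$-Galois extension $S_\infty\to\bar{S}$ gives that $H^j(H,\bar{S})$ is annihilated by the maximal ideal $\mathfrak{m}=(p^{1/p^\infty})$ for all $j>0$, while $H^0(H,\bar{S})=S_\infty$. Feeding this into the Hochschild--Serre spectral sequence
\[
H^i(\Gamma,H^j(H,\bar{S}))\Longrightarrow H^{i+j}(\Delta,\bar{S})
\]
makes it collapse up to $\mathfrak{m}$-torsion, and the edge map becomes the asserted almost isomorphism $H^i(\Gamma,S_\infty)\to H^i(\Delta,\bar{S})$.

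The main obstacle is exactly the almost purity statement for $A=\hat{S}_\infty$ --- the assertion that a finite \'etale cover of the generic fibre of a perfectoid ring spreads out to an almost finite \'etale cover integrally. Everything else (the replacement of $S$ by $S_\infty$, the descent, the spectral sequence) is formal once one works comfortably in almost mathematics over $\mathfrak{m}=(p^{1/p^\infty})$. That core step is genuinely deep: it rests on the tilting correspondence for perfectoid spaces and on delicate non-noetherian commutative algebra of perfect rings in characteristic $p$, which is why the theorem cannot be verified ``by hand''.
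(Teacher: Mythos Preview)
The paper does not give a proof of this theorem; it is quoted with attribution to Faltings, \cite[Theorem 3.1]{FaltingsJAMS}, and used as a black box in the subsequent arguments (notably in the proof of Theorem~\ref{thmweak}). So there is no ``paper's own proof'' to compare against.

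That said, your sketch is a correct outline of the modern perfectoid proof of almost purity, essentially the approach of \cite{scholzethesis} rather than Faltings's original inductive argument via ramification theory and Abhyankar's lemma (which you mention parenthetically). The reduction to the perfectoid setting by passing to $\hat{S}_\infty$, the appeal to tilting to transport the question to characteristic $p$ where perfectness kills almost ramification, and the deduction of the cohomological statement from the Hochschild--Serre spectral sequence for $1\to H\to\Delta\to\Gamma\to 1$ together with almost vanishing of $H^{>0}(H,\bar{S})$ are all standard and correctly assembled. Your honest identification of the tilting equivalence and the characteristic-$p$ almost \'etaleness as the genuinely deep input is accurate; everything else is indeed formal once one works in the almost category over $\mathfrak m=(p^{1/p^\infty})$.
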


The theorem is a weak version of Abhyankar's lemma in this setup: Extracting $p$-power roots of all $T_i$ kills almost all the ramification along the special fiber of any finite \'etale cover of $S[\tfrac 1p]$.

We apply this theorem not to $\bar{S}$ and $S_\infty$ directly, but rather to versions of Fontaine's $A_\inf$ ring constructed from them. Namely, let
\[\begin{aligned}
A_\inf(\bar{S}) &= W(\bar{S}^\flat)\ ,\ \bar{S}^\flat = \varprojlim_\phi \bar{S}/p\ ,\\
A_\inf(S_\infty) &= W(S_\infty^\flat)\ ,\ S_\infty^\flat = \varprojlim_\phi S_\infty/p\ .
\end{aligned}\]
As for $A_\inf = A_\inf(\roi)$, there is a surjective map $\theta: A_\inf(\bar{S})\to \hat{\bar{S}}$ (resp. $\theta: A_\inf(S_\infty)\to \hat{S}_\infty$) whose kernel is generated by the non-zero-divisor $\xi$.

We note that there are some special elements in $S_\infty^\flat\subset \bar{S}^\flat$. Namely, we have the elements $\epsilon=(1,\zeta_p,\zeta_{p^2},\ldots)$, $T_i^\flat = (T_i,T_i^{1/p},T_i^{1/p^2},\ldots)$ consisting of sequences of $p$-th roots. Their Teichm\"uller lifts define elements $[\epsilon],[T_i^\flat]\in A_\inf(S_\infty)$.

The following lemma is critical. It relates the deformation $A(S)^\square$ of $S$ over $A_\inf$ to $A_\inf(S_\infty)$. Recall that $A(S)^\square$ is (formally) \'etale over $A_\inf\langle T_1,\ldots,T_d\rangle$. In order to better distinguish between different objects, let us rewrite the variables $T_i$ in this algebra as $U_i$, so $A(S)^\square$ is formally \'etale over $A_\inf\langle U_1,\ldots,U_d\rangle$, lifting the formally \'etale extension $S$ of $\roi\langle T_1,\ldots,T_d\rangle$ via the map $A_\inf\langle U_1,\ldots,U_d\rangle\to \roi\langle T_1,\ldots,T_d\rangle$ sending $U_i$ to $T_i$.

\begin{lemma} The map
\[
A_\inf\langle U_1^{1/p^\infty},\ldots,U_d^{1/p^\infty}\rangle\to A_\inf(S_\infty)\ ,\ U_i^{1/p^n}\mapsto [(T_i^\flat)^{1/p^n}]\ ,
\]
extends to a unique isomorphism
\[
A_\inf\langle U_1^{1/p^\infty},\ldots,U_d^{1/p^\infty}\rangle\hat{\otimes}_{A_\inf\langle U_1,\ldots,U_d\rangle} A(S)^\square\cong A_\inf(S_\infty)
\]
commuting with the natural maps to $\hat{S}_\infty$ explained below.
\end{lemma}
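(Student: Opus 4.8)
The plan is to construct the map from the universal property of the completed tensor product (using formal \'etaleness of $A(S)^\square$), then to verify it is an isomorphism by reducing modulo $\xi$ --- where it becomes a transparent statement of commutative algebra --- and to propagate this back via a $(p,\xi)$-adic Nakayama argument.

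\emph{Construction.} The recipe $U_i^{1/p^n}\mapsto[(T_i^\flat)^{1/p^n}]$ is well defined since $(T_i^\flat)^{1/p^n}\in S_\infty^\flat$, Teichm\"uller lifts are multiplicative, and $A_\inf(S_\infty)$ is $(p,\xi)$-adically complete; it therefore extends uniquely to $A_\inf\langle U_1^{1/p^\infty},\ldots,U_d^{1/p^\infty}\rangle$ and restricts on $A_\inf\langle U_1,\ldots,U_d\rangle$ to $U_i\mapsto[T_i^\flat]$, whose composite with $\theta\colon A_\inf(S_\infty)\onto\hat S_\infty$ is the structure map $A_\inf\langle U_i\rangle\to\roi\langle T_i\rangle\to\hat S_\infty$. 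Since $\ker\theta=(\xi)$ is topologically nilpotent in the $(p,\xi)$-complete ring $A_\inf(S_\infty)$ and $A(S)^\square$ is formally \'etale over $A_\inf\langle U_i\rangle$, there is a unique $A_\inf\langle U_i\rangle$-algebra map $A(S)^\square\to A_\inf(S_\infty)$ lifting $A(S)^\square\to S\to\hat S_\infty$ (obtained by successive square-zero lifts along $A_\inf(S_\infty)/\xi^{n+1}\onto A_\inf(S_\infty)/\xi^{n}$ and passage to the limit). The two maps agree on $A_\inf\langle U_i\rangle$, so the universal property of the $(p,\xi)$-completed tensor product yields the asserted map; its uniqueness and its compatibility with the maps to $\hat S_\infty$ are built into the construction.

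\emph{Reduction modulo $\xi$.} Both $A_\inf\langle U_i^{1/p^\infty}\rangle$ (a $(p,\xi)$-completed increasing union of finite free modules) and $A(S)^\square$ (formally \'etale) are $(p,\xi)$-completely flat over $A_\inf\langle U_i\rangle$, so the left-hand side $L$ is $(p,\xi)$-completely flat over $A_\inf$; using $A_\inf\langle U_i^{1/p^\infty}\rangle/\xi=\roi\langle T_1^{1/p^\infty},\ldots,T_d^{1/p^\infty}\rangle$ and $A(S)^\square/\xi=S$ one gets $L/\xi=\roi\langle T_1^{1/p^\infty},\ldots,T_d^{1/p^\infty}\rangle\,\hat{\otimes}_{\roi\langle T_i\rangle}\,S$, i.e.\ the $p$-adic completion of $S[T_1^{1/p^\infty},\ldots,T_d^{1/p^\infty}]$. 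The right-hand side reduces to $A_\inf(S_\infty)/\xi=\hat S_\infty$. Now $S$ is normal --- being \'etale over the Tate algebra $\roi\langle T_1,\ldots,T_d\rangle$, which is normal since $\roi$ is --- hence so is $S[T_1^{1/p^\infty},\ldots,T_d^{1/p^\infty}]$ (\'etale over $\roi[T_1^{1/p^\infty},\ldots,T_d^{1/p^\infty}]$, a filtered union of polynomial rings over $\roi$); therefore it is integrally closed in $S[\tfrac1p,T_1^{1/p^\infty},\ldots,T_d^{1/p^\infty}]$ and coincides with $S_\infty$. Thus the mod-$\xi$ reduction of our map is the tautological isomorphism $L/\xi\Isoto\hat S_\infty$.

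\emph{Conclusion and main difficulty.} Both sides are $(p,\xi)$-adically complete; the left is $(p,\xi)$-completely flat over $A_\inf$ by the above, and $A_\inf(S_\infty)=W(S_\infty^\flat)$ is $p$-torsion free (with $\xi$ a non-zero-divisor), so both have vanishing higher $\Tor$ against $A_\inf/(p,\xi)$. Since the map is an isomorphism modulo $\xi$, it is one modulo $(p,\xi)$, and derived $(p,\xi)$-adic Nakayama applied to its cofibre forces it to be an isomorphism. Curiously, the deep input --- Faltings's almost purity theorem, Theorem~\ref{thmalmostpurity} --- is \emph{not} needed here; it enters only afterwards, when one upgrades this to Theorem~\ref{thmA}. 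The one genuinely non-formal step is the identification of the integral closure $S_\infty$ with the explicit algebra $S[T_i^{1/p^\infty}]$: this relies on normality of $S$ and on the behaviour of integral closure under the relevant \'etale base changes and $p$-adic completions, which in this non-Noetherian setting must be handled with care (and is carried out in~\cite{BMS}). Granting that, everything else is routine bookkeeping with completed tensor products and $\xi$-adic d\'evissage.
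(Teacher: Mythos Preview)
The paper does not prove this lemma --- it is a survey, and the details are deferred to \cite{BMS}. Your argument is correct and is the standard one: construct the map from formal \'etaleness of $A(S)^\square$ over $A_\inf\langle U_i\rangle$, reduce modulo $\xi$ to the identification of $\hat S_\infty$ with the $p$-adic completion of $S[T_1^{1/p^\infty},\ldots,T_d^{1/p^\infty}]$, and conclude by derived $(p,\xi)$-adic Nakayama. You are also right that almost purity is not used at this stage; the only non-formal input is the normality statement you isolate (which relies on the hypothesis, present in the paper, that the $T_i$ are units in $S$, so that the tower is \'etale after inverting $p$ over a normal base), and this is precisely how \cite{BMS} handles it.
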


Here, on the left side, the natural map
\[
A_\inf\langle U_1^{1/p^\infty},\ldots,U_d^{1/p^\infty}\rangle\hat{\otimes}_{A_\inf\langle U_1,\ldots,U_d\rangle} A(S)^\square\to \hat{S}_\infty
\]
sends $U_i^{1/p^n}$ to $T_i^{1/p^n}$ (in particular, $U_i$ to $T_i$), and $A(S)^\square\to S$. On the right side, it is given by $\theta$.

In particular, we see that the functions $U_i$, previously denoted by $T_i$, in the deformation $A(S)^\square$ are related to systems of $p$-power roots of the functions $T_i\in S$. This observation seems to be critical to an understanding of the phenomena in this paper.

Now we can construct $A\Omega_S$, up to some small $(q-1)$-torsion, where $q=[\epsilon]$.

\begin{theorem}[{\cite{BMS}}]\label{thmweak} Let $S$ be a smooth $\roi$-algebra with framing $\square$, sending $T_i$ to invertible elements of $S$, as above.
\begin{altenumerate}
\item[{\rm (i)}] The kernel and cokernel of the map
\[
H^i(\Gamma,A_\inf(S_\infty))\to H^i(\Delta,A_\inf(\bar{S}))
\]
are killed by $q-1$.
\item[{\rm (ii)}] The map
\[
H^i(\Gamma,A(S)^\square)\to H^i(\Gamma,A_\inf(S_\infty))
\]
is injective, with cokernel killed by $q-1$.
\item[{\rm (iii)}] There is a natural map
\[
q\!\op-\!\Omega^\bullet_{A(S)^\square/A_\inf}\to R\Gamma(\Gamma,A(S)^\square)
\]
such that the induced map cohomology has kernel and cokernel killed by $(q-1)^d$.
\end{altenumerate}
\end{theorem}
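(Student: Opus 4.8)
The plan is to handle the three parts in the order (iii), (ii), (i), following the chain of comparisons $q\!\op-\!\Omega^\bullet\rightsquigarrow R\Gamma(\Gamma,A(S)^\square)\rightsquigarrow R\Gamma(\Gamma,A_\inf(S_\infty))\rightsquigarrow R\Gamma(\Delta,A_\inf(\bar S))$; the only deep input is Faltings's almost purity theorem (Theorem~\ref{thmalmostpurity}), which enters only in (i), whereas (ii) and (iii) are manipulations of Koszul complexes.

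\emph{Part (iii).} First I would identify both sides as Koszul complexes. Since $A_\inf\langle U_1,\ldots,U_d\rangle\to A(S)^\square$ is formally \'etale and $\gamma_i$ sends $U_i\mapsto qU_i$, $U_j\mapsto U_j$ for $j\ne i$ --- hence is congruent to the identity modulo $q-1$ on the base --- uniqueness of \'etale lifts forces $\gamma_i\equiv\mathrm{id}$ modulo $q-1$ on all of $A(S)^\square$. As $A(S)^\square$ is $(q-1)$-torsion free, $\nabla_{q,i}:=(\gamma_i-1)/((q-1)U_i)$ is a well-defined $A_\inf$-linear operator; the $\nabla_{q,i}$ commute, $q\!\op-\!\Omega^\bullet_{A(S)^\square/A_\inf}$ is the Koszul complex on $(\nabla_{q,1},\ldots,\nabla_{q,d})$, and $R\Gamma(\Gamma,A(S)^\square)$ (computed, since $\Gamma\cong\bb Z_p^d$, by continuous cochains) is the Koszul complex on $(\gamma_1-1,\ldots,\gamma_d-1)=((q-1)U_1\nabla_{q,1},\ldots,(q-1)U_d\nabla_{q,d})$. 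The natural map is the standard comparison scaling the degree-$j$ term by $\prod_{i\in I}(q-1)U_i$ over the $j$-element index sets $I$. Writing the $d$-variable Koszul complex as the tensor product of $d$ one-variable ones, and checking that in one variable the cofiber of $\bigl[A(S)^\square\xrightarrow{\nabla_{q,i}}A(S)^\square\bigr]\to\bigl[A(S)^\square\xrightarrow{(q-1)U_i\nabla_{q,i}}A(S)^\square\bigr]$ has cohomology killed by $q-1$ (the $U_i$ being units, so this is the usual scaling by $q-1$), the cofiber of the $d$-variable comparison has cohomology killed by $(q-1)^d$, and hence so are the kernel and cokernel of the induced map on cohomology.

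\emph{Part (ii).} Next I would use the Lemma presenting $A_\inf(S_\infty)=A_\inf\langle U_1^{1/p^\infty},\ldots,U_d^{1/p^\infty}\rangle\hat{\otimes}_{A_\inf\langle U_1,\ldots,U_d\rangle}A(S)^\square$ as the $(p,\xi)$-adically completed direct sum $\widehat{\bigoplus}_a A(S)^\square\cdot U^a$, where $U^a:=\prod_i U_i^{a_i}$ and $a$ runs over representatives in $[0,1)^d$ of $(\bb Z[1/p]/\bb Z)^d$, with $\gamma_i$ acting on $A(S)^\square\cdot U^a$ by its restriction to $A(S)^\square$ twisted by the scalar $[\epsilon]^{a_i}$. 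This grading is $\Gamma$-stable, so $R\Gamma(\Gamma,A(S)^\square)$ is the $a=0$ direct summand of $R\Gamma(\Gamma,A_\inf(S_\infty))$ and it suffices to kill the contribution of the $a\ne 0$ summands by $q-1$. Fixing $a\ne 0$ and an index $i_0$ with $a_{i_0}=b/p^n$, $b$ prime to $p$, one has $[\epsilon]^{a_{i_0}}-1=([\epsilon^{1/p^n}]-1)\cdot(\mathrm{unit})$ with $[\epsilon^{1/p^n}]-1$ a nonzerodivisor dividing $q-1=[\epsilon]-1$; combined with $\gamma_{i_0}\equiv\mathrm{id}$ modulo $q-1$, this shows that $\gamma_{i_0}-1$ on $A(S)^\square\cdot U^a$ is $[\epsilon^{1/p^n}]-1$ times an operator that is invertible once the relevant convergence estimates are in hand (here one uses that $\square$ sends the $T_i$ to units). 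The one-variable Koszul complex in the $i_0$-direction then makes the whole $d$-variable Koszul complex a complex of modules killed by $[\epsilon^{1/p^n}]-1$, hence by $q-1$, and acyclic in degree $0$ --- which is the asserted injectivity with cokernel killed by $q-1$.

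\emph{Part (i), and the main obstacle.} For (i) I would import Theorem~\ref{thmalmostpurity}: $H^i(\Gamma,S_\infty)\to H^i(\Delta,\bar S)$ is an almost isomorphism with respect to the maximal ideal $\mathfrak m$ of $\roi$. This must be transported to $A_\inf$: the extension $\bar S^\flat/S_\infty^\flat$ is again almost finite \'etale by tilting, and applying $W(-)$ --- using the $p$-adic filtration on Witt vectors, whose graded pieces recover the tilts, to reduce to the statement modulo $p$ --- shows that $H^i(\Gamma,A_\inf(S_\infty))\to H^i(\Delta,A_\inf(\bar S))$ is an almost isomorphism with respect to $W(\mathfrak m^\flat)\subseteq A_\inf$. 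Finally $q-1=[\epsilon]-1$ lies in $W(\mathfrak m^\flat)$: its Witt components are $\epsilon-1$ and integral polynomials in $\epsilon$ vanishing at $\epsilon=1$, hence all lie in $\mathfrak m^\flat$; so any $A_\inf$-module annihilated by $W(\mathfrak m^\flat)$ is annihilated by $q-1$, which gives the bound on kernel and cokernel. The hard part is exactly this: not the concluding $q-1$-torsion bookkeeping, which is formal once $q-1\in W(\mathfrak m^\flat)$ is noticed, but the appeal to almost purity and the care needed to move the almost-isomorphism through the tilting equivalence and the Witt vector functor, along with tracking $(p,\xi)$-adic completions throughout and justifying the invertibility statement used in (ii).
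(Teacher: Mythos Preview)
Your proposal is correct and matches the paper's own proof in all essentials. The paper sketches the three parts in the order (i), (ii), (iii) rather than your (iii), (ii), (i), but the content is the same: (iii) is the comparison of two Koszul complexes via multiplication by $(q-1)^i$ in degree $i$ (the paper uses logarithmic differentials $d\log U_i$, which absorbs your unit factors $U_i$); (ii) is the $\Gamma$-equivariant direct-sum decomposition of $A_\inf(S_\infty)$ over $A(S)^\square$ indexed by fractional exponents, with the integral part giving a split summand and the non-integral part contributing cohomology killed by $q-1$ (the paper defers the computation to \cite[Lemma 9.6]{BMS}, and your ``invertibility'' step is exactly the delicate point hidden there); (i) is Faltings's almost purity transported to the $A_\inf$-level, where the paper says only ``a consequence of Faltings's almost purity theorem'' while you spell out the passage through tilting and Witt vectors and the observation $q-1\in W(\mathfrak m^\flat)$.
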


\begin{proof} Let us give a brief sketch. Part (i) is a consequence of Faltings's almost purity theorem. For part (ii), note that the isomorphism
\[
A_\inf\langle U_1^{1/p^\infty},\ldots,U_d^{1/p^\infty}\rangle\hat{\otimes}_{A_\inf\langle U_1,\ldots,U_d\rangle} A(S)^\square\cong A_\inf(S_\infty)
\]
makes $A_\inf(S_\infty)$ topologically free over $A(S)^\square$, with basis given by monomials $U_1^{a_1}\cdots U_d^{a_d}$, with $a_i\in [0,1)\cap \bb Z[\tfrac 1p]$. This decomposition is $\Gamma$-equivariant. In particular, $A(S)^\square$ itself splits off as a $\Gamma$-equivariant direct summand, proving (split) injectivity. The cokernel is computed by the cohomology of the terms indexed by non-integral exponents. This cohomology can be shown to be killed by $q-1$ by a direct computation, cf.~\cite[Lemma 9.6]{BMS}.

Finally, for part (iii), note that cohomology with coefficients in $\Gamma = \bb Z_p^d$ can be computed by a Koszul complex. This is given by
\[
A(S)^\square\to (A(S)^\square)^d\to (A(S)^\square)^{d\choose 2}\to \ldots \to (A(S)^\square)^{d\choose d}\ ,
\]
where the components of the first differential are given by $\gamma_i-1: A(S)^\square\to A(S)^\square$, $i=1,\ldots,d$. Similarly, we can write
\[
q\!\op-\!\Omega^\bullet_{A(S)^\square/A_\inf} = A(S)^\square\to (A(S)^\square)^d\to (A(S)^\square)^{d\choose 2}\to \ldots \to (A(S)^\square)^{d\choose d}\ ,
\]
where we use the logarithmic differentials $d\log(U_i) = \frac{dU_i}{U_i}$ as a basis for $\Omega^1_{A(S)^\square/A_\inf}$. In this normalization, the components of the first differential are given by $\frac{\gamma_i-1}{q-1}$, $i=1,\ldots,d$. We see that multiplication by $(q-1)^i$ in degree $i$ defines a map of complexes
\[
q\!\op-\!\Omega^\bullet_{A(S)^\square/A_\inf}\to R\Gamma(\Gamma,A(S)^\square)\ ,
\]
whose cone is killed by $(q-1)^d$; in particular, we get the desired result on the level of cohomology groups.
\end{proof}

Similar computations were already done by Faltings, although they were not expressed in the language of the $q$-de~Rham complex. The new observation in \cite{BMS} was that it was possible to remove all $(q-1)$-torsion in the previous result; the new idea was to use the following operation on the derived category (first defined by Berthelot--Ogus, \cite[Chapter 8]{BerthelotOgus}).

\begin{proposition} Let $A$ be a ring with a non-zero-divisor $f\in A$. Let $C^\bullet$ be a complex of $f$-torsion free $A$-modules. Define a new complex of ($f$-torsion free) $A$-modules $\eta_f C^\bullet$ as a subcomplex of $C^\bullet[\tfrac 1f]$ by
\[
(\eta_f C)^i = \{x\in f^i C^i\mid dx\in f^{i+1} C^{i+1}\}\ .
\]
If $C_1^\bullet\to C_2^\bullet$ is a quasi-isomorphism of $f$-torsion free $A$-modules, then $\eta_f C_1^\bullet\to \eta_f C_2^\bullet$ is also a quasi-isomorphism; thus, this operation passes to the derived category $D(A)$ of $A$-modules, and defines a functor $L\eta_f: D(A)\to D(A)$.
\end{proposition}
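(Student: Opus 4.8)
The plan is to verify the two assertions in turn: first, that $\eta_f C^\bullet$ is a well-defined subcomplex of $C^\bullet[\tfrac 1f]$ consisting of $f$-torsion free modules and depends functorially on $C^\bullet$; and second --- the substantial point --- that $\eta_f$ carries quasi-isomorphisms of $f$-torsion free complexes to quasi-isomorphisms, so that $L\eta_f$ makes sense on $D(A)$. The first assertion is immediate: $(\eta_f C)^i$ lies in $C^i[\tfrac 1f]$, on which $f$ acts invertibly, so it is $f$-torsion free; and if $x\in(\eta_f C)^i$ then $dx\in f^{i+1}C^{i+1}$ by definition while $d(dx)=0\in f^{i+2}C^{i+2}$, so $dx\in(\eta_f C)^{i+1}$, i.e.\ $d$ preserves these subgroups. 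Functoriality is equally formal: a chain map $\phi\colon C_1^\bullet\to C_2^\bullet$ extends to $C_1^\bullet[\tfrac 1f]\to C_2^\bullet[\tfrac 1f]$, visibly sends $f^iC_1^i$ into $f^iC_2^i$, and commutes with $d$, hence restricts to $\eta_f\phi\colon\eta_f C_1^\bullet\to\eta_f C_2^\bullet$. I would stress here that $\eta_f$ is \emph{not} additive and does \emph{not} descend to the naive homotopy category (a chain homotopy need not land in the twisted subgroups $f^iC^i$), which is exactly why the quasi-isomorphism statement is genuinely non-formal and has to be proved through an explicit computation of cohomology.

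\textbf{The key step} is the following formula, natural in $C^\bullet$: for every complex $C^\bullet$ of $f$-torsion free $A$-modules there is an isomorphism
\[
H^i(\eta_f C^\bullet)\;\cong\; H^i(C^\bullet)/H^i(C^\bullet)[f]\ .
\]
I would prove it by direct inspection of cocycles and coboundaries inside $C^\bullet[\tfrac 1f]$. Write $Z^i$, $B^i$ for the cocycles and coboundaries of $C^\bullet$. Since $C^{i+1}\hookrightarrow C^{i+1}[\tfrac 1f]$, a cocycle of $\eta_f C^\bullet$ in degree $i$ is just an element of $f^iC^i$ killed by $d$, so $Z^i(\eta_f C^\bullet)=f^iZ^i$; and unwinding the definition, $B^i(\eta_f C^\bullet)=d\big((\eta_f C)^{i-1}\big)=f^{i-1}B^i\cap f^iC^i$. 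Multiplying by $f^{-i}$ (an $A$-linear, hence natural, automorphism of $C^i[\tfrac 1f]$) identifies $H^i(\eta_f C^\bullet)$ with $Z^i/(f^{-1}B^i\cap C^i)=Z^i/\{\,y\in C^i\mid fy\in B^i\,\}$. Finally $\{\,y\in C^i\mid fy\in B^i\,\}$ contains $B^i$ and is contained in $Z^i$ (since $fy\in B^i\subseteq Z^i$ forces $f\,dy=0$, hence $dy=0$ by $f$-torsion freeness of $C^{i+1}$), and its image in $H^i(C^\bullet)=Z^i/B^i$ is precisely the $f$-torsion submodule $H^i(C^\bullet)[f]$; this gives the displayed isomorphism. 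Every cancellation of a power of $f$ above uses $f$-torsion freeness of the relevant $C^j$, and I expect that the only real care needed --- and hence the main obstacle --- is this bookkeeping, since there is no deeper idea involved once one commits to computing $H^i(\eta_f C^\bullet)$ by hand.

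\textbf{Conclusion.} With the formula in hand the rest is formal. If $\phi\colon C_1^\bullet\to C_2^\bullet$ is a quasi-isomorphism of $f$-torsion free complexes, then each $H^i(\phi)$ is an isomorphism, hence so is the induced map $H^i(C_1^\bullet)/H^i(C_1^\bullet)[f]\to H^i(C_2^\bullet)/H^i(C_2^\bullet)[f]$ (passing to the quotient by the $f$-torsion submodule is functorial), hence by naturality of the formula $\eta_f\phi$ induces isomorphisms on all $H^i$, i.e.\ $\eta_f\phi$ is a quasi-isomorphism. To obtain the functor $L\eta_f\colon D(A)\to D(A)$, I would note that every object of $D(A)$ is represented by a complex of flat $A$-modules (e.g.\ a $K$-flat resolution), and that flat modules are $f$-torsion free because $f$ is a non-zero-divisor (tensoring $0\to A\xrightarrow{f}A$ with a flat module preserves exactness); $\eta_f$ is then an honest functor on such complexes sending quasi-isomorphisms to quasi-isomorphisms by the above, so the usual localization formalism produces a well-defined, functorial $L\eta_f$ on $D(A)$. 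As a byproduct the formula shows that $L\eta_f$ is the identity on $f$-torsion free cohomology and annihilates the $f$-power torsion in each cohomology group.
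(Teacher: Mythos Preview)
The paper does not actually prove this proposition; it is stated with a reference to Berthelot--Ogus \cite[Chapter 8]{BerthelotOgus} (and implicitly to \cite{BMS}, where the same argument is recorded). Your proof is correct and is precisely the standard one found in those references: the heart of the matter is the natural identification $H^i(\eta_f C^\bullet)\cong H^i(C^\bullet)/H^i(C^\bullet)[f]$, which you establish cleanly, and from which preservation of quasi-isomorphisms is immediate.

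Two small remarks on wording, neither affecting the argument. First, ``$K$-flat'' is a property of a complex (acyclicity is preserved under tensoring with it), not a statement about termwise flatness; what you want is Spaltenstein's theorem that every object of $D(A)$ admits a $K$-flat resolution by a complex whose terms are flat (hence $f$-torsion free), or simply a $K$-projective resolution. Second, in your final sentence the formula kills exactly the $f$-torsion $H^i(C^\bullet)[f]$, not the full $f$-power torsion; iterating $\eta_f$ would of course peel off successive layers.
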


Now we can give the definition of $A\Omega_S$.

\begin{definition} The complex $A\Omega_S$ is given by
\[
A\Omega_S = L\eta_{q-1} R\Gamma(\Delta,A_\inf(\bar{S}))\ .
\]
\end{definition}

This gives the following refined version of Theorem~\ref{thmweak}.

\begin{theorem}[{\cite{BMS}}]\label{thmstrong} Let $S$ be a smooth $\roi$-algebra with framing $\square$, sending $T_i$ to invertible elements of $S$, as above.
\begin{altenumerate}
\item[{\rm (i)}] The map
\[
L\eta_{q-1} R\Gamma(\Gamma,A_\inf(S_\infty))\to L\eta_{q-1} R\Gamma(\Delta,A_\inf(\bar{S})) = A\Omega_S
\]
is a quasi-isomorphism.
\item[{\rm (ii)}] The map
\[
L\eta_{q-1} R\Gamma(\Gamma,A(S)^\square)\to L\eta_{q-1} R\Gamma(\Gamma,A_\inf(S_\infty))
\]
is a quasi-isomorphism.
\item[{\rm (iii)}] There is a natural quasi-isomorphism
\[
q\!\op-\!\Omega^\bullet_{A(S)^\square/A_\inf}\to L\eta_{q-1} R\Gamma(\Gamma,A(S)^\square)\ .
\]
\end{altenumerate}
\end{theorem}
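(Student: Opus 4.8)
The plan is to obtain all three quasi-isomorphisms by applying the d\'ecalage functor $L\eta_{q-1}$ to the maps and structures already produced in Theorem~\ref{thmweak}: the bounded $(q-1)$-torsion discrepancies appearing there are exactly what $L\eta_{q-1}$ is built to absorb. I would treat the parts in the order (iii), (ii), (i), with (i) the genuine obstacle. Throughout I use the elementary formula $H^i(L\eta_{q-1}C)\cong H^i(C)/H^i(C)[q-1]$ for a complex $C$ of $(q-1)$-torsion-free modules; in particular $L\eta_{q-1}C=0$ once all $H^i(C)$ are killed by $q-1$, and $L\eta_{q-1}$ commutes with (completed) direct sums.

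For (iii): $R\Gamma(\Gamma,A(S)^\square)$ is computed by the Koszul complex $K^\bullet=K(A(S)^\square;\gamma_1-1,\dots,\gamma_d-1)$ on the commuting $A_\inf$-linear automorphisms $\gamma_i$, with each term $(q-1)$-torsion-free since $A(S)^\square$ is flat over $A_\inf$. Because $\gamma_i(U_i)=qU_i$, one has $\gamma_i-1=(q-1)\delta_i$ for an $A_\inf$-linear $q$-derivation $\delta_i$, so every component of the differential of $K^\bullet$ is divisible by $q-1$. Hence in $(\eta_{q-1}K)^j=\{x\in(q-1)^jK^j: dx\in(q-1)^{j+1}K^{j+1}\}$ the closure condition is automatic, so $(\eta_{q-1}K)^j$ is all of $(q-1)^jK^j$; multiplying the $j$-th term by $(q-1)^{-j}$ identifies this subcomplex, using $d\log U_i=dU_i/U_i$ as basis of differentials, with $q\!\op-\!\Omega^\bullet_{A(S)^\square/A_\inf}$ on the nose, since $\nabla_q(f)=\sum_i\delta_i(f)\,d\log U_i$. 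The natural map of (iii) is then precisely the degreewise multiplication by $(q-1)^i$ of Theorem~\ref{thmweak}(iii), now recognized as an isomorphism onto $L\eta_{q-1}R\Gamma(\Gamma,A(S)^\square)$. There is no obstacle beyond tracking signs and the standard behaviour of $L\eta$ on Koszul complexes.

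For (ii): the isomorphism $A_\inf\langle U_1^{1/p^\infty},\dots,U_d^{1/p^\infty}\rangle\hat{\otimes}_{A_\inf\langle U_1,\dots,U_d\rangle}A(S)^\square\cong A_\inf(S_\infty)$ exhibits $A_\inf(S_\infty)$ as a $\Gamma$-equivariant completed direct sum $\widehat{\bigoplus}_a A(S)^\square\cdot U^a$ over $a\in([0,1)\cap\bb Z[\tfrac1p])^d$. Hence $R\Gamma(\Gamma,A_\inf(S_\infty))\cong R\Gamma(\Gamma,A(S)^\square)\oplus R\Gamma(\Gamma,M)$ in the derived category, with $M=\widehat{\bigoplus}_{a\neq0}A(S)^\square U^a$ and the map of (ii) the inclusion of the first summand. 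Since $L\eta_{q-1}$ commutes with the completed direct sum, it suffices to see $L\eta_{q-1}R\Gamma(\Gamma,A(S)^\square U^a)=0$ for $a\neq0$. For such $a$ some $a_i\neq0$, the operator $\gamma_i$ on this summand differs from the original one on $A(S)^\square$ by the scalar $[\epsilon^{a_i}]$, and since $[\epsilon^{a_i}]-1$ is a non-zero-divisor dividing $q-1$, a direct computation (\cite[Lemma~9.6]{BMS}) shows the cohomology of this Koszul summand is killed by $q-1$; by the formula above $L\eta_{q-1}$ kills it. The only care needed is the interaction of $L\eta$ with the $(p,\xi)$-adic completion, which is harmless.

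The main obstacle is (i): passing from the almost-quasi-isomorphism $R\Gamma(\Gamma,A_\inf(S_\infty))\to R\Gamma(\Delta,A_\inf(\bar S))$ -- which comes from Faltings's almost purity theorem via Hochschild--Serre, refining Theorem~\ref{thmweak}(i) -- to an honest quasi-isomorphism after $L\eta_{q-1}$. This is \emph{not} formal: $L\eta_{q-1}$ is not a triangulated functor, so the mere fact that the cone has cohomology killed by $q-1$ (or even that it is almost zero) does \emph{not} yield a quasi-isomorphism after $L\eta_{q-1}$ -- there are elementary counterexamples. The crux is the dedicated lemma that $L\eta_\mu$, with $\mu=q-1=[\epsilon]-1$, does convert an almost-quasi-isomorphism into a genuine one, which uses decisively that the almost setting on $A_\inf$ is governed by the idempotent ideal $W(\mathfrak m^\flat)$ containing the whole compatible system of ``roots'' $[\epsilon^{1/p^n}]-1$ of $\mu$: one writes $\mu=([\epsilon^{1/p^n}]-1)\,v_n$ with $v_n$ a non-zero-divisor, uses $L\eta_\mu=L\eta_{[\epsilon^{1/p^n}]-1}\circ L\eta_{v_n}$, and trivializes the residual ambiguity by letting $n\to\infty$, where the valuations of the $[\epsilon^{1/p^n}]-1$ tend to $0$; concretely one may instead compute $L\eta_\mu R\Gamma(\Delta,A_\inf(\bar S))$ modulo $\mu$ as a Bockstein/de~Rham-type object built from the perfectoid $\bar S$ and match it with the framed side. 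Establishing this d\'ecalage-versus-almost-mathematics interaction is the technical heart, and it is precisely what makes $A\Omega_S=L\eta_{q-1}R\Gamma(\Delta,A_\inf(\bar S))$, visibly independent of the framing, agree with the explicit framed $q$-de~Rham complex -- and hence also what yields Theorem~\ref{thmA}.
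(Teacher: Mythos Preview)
Your proposal is correct and follows the paper's own (very brief) sketch: parts (ii) and (iii) go exactly as you outline, and you rightly isolate (i) as the crux, where $L\eta_{q-1}$ must upgrade an almost quasi-isomorphism to an honest one despite not being triangulated. The one point of emphasis the paper makes, which your factorization-and-limit heuristic for (i) does not quite capture, is that the argument works not via a general lemma about $L\eta_\mu$ and almost mathematics, but because one already knows from (ii) and (iii) that the \emph{source} $L\eta_{q-1}R\Gamma(\Gamma,A_\inf(S_\infty))$ has explicitly controlled cohomology; it is this ``niceness of the source'', combined with the almost quasi-isomorphism from Faltings's theorem, that forces the map to become a genuine quasi-isomorphism after d\'ecalage.
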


Combining parts (i), (ii) and (iii), we get the desired quasi-isomorphism
\[
q\!\op-\!\Omega^\bullet_{A(S)^\square/A_\inf}\to A\Omega_S\ ,
\]
proving that $q\!\op-\!\Omega^\bullet_{A(S)^\square/A_\inf}$ is canonical, as desired.

\begin{proof} The proof builds on Theorem~\ref{thmweak}, and refines the arguments in each step. The hardest part is (i). The surprising part of the proof is that one can prove part (i) without establishing any further properties of the completely inexplicit $R\Gamma(\Delta,A_\infty(\bar{S}))$ besides what follows from Faltings's almost purity theorem. The phenomenon at work here is that in some circumstances, the operation $L\eta_{q-1}$ turns almost quasi-isomorphisms into quasi-isomorphisms. This is not true in general, but it is enough to know that the source is nice.
\end{proof}

Note that the definition of $A\Omega_S$ is by some form of (finite) \'etale cohomology of $S[\tfrac 1p]$; it is through a computation that this gets related to differential forms. However, one takes cohomology with coefficients in the big ring $A_\inf(\bar{S})$, so it is not a priori clear that one gets the expected relation to \'etale cohomology as in Theorem~\ref{thmB}. Theorem~\ref{thmB} follows from a comparison result of Faltings, \cite[\S 3, Theorem 8]{FaltingsAlmostEtale}, \cite[Theorem 1.3]{scholzepadichodge}, saying roughly that on a proper smooth space, the \'etale cohomology with coefficients in the relative $A_\inf$-construction agrees (almost) with the \'etale cohomology with coefficients in the constant ring $A_\inf = A_\inf(\roi)$.

\section{Operations on $q$-de~Rham cohomology}\label{sec:operations}

We expect that there are many semilinear operations on the $q$-de~Rham complex, and thus on $q$-de~Rham cohomology. This is related to some investigations by Kedlaya on ``doing $p$-adic Hodge theory for all $p$ at once'', \cite{kedlayaglobalhodge}.

To motivate the extra structure we expect on $q$-de~Rham cohomology, we first relate it to some known constructions in abstract $p$-adic Hodge theory. Assume that $X$ is smooth and proper over $\bb Z[\tfrac 1N]$, and $p$ is a prime that does not divide $N$. Assume for simplicity that $H^i_{q\!\op-\!\dR}(X)\otimes_{\bb Z[\frac 1N][[q-1]]} \bb Z_p[[q-1]]$ is a finite free $\bb Z_p[[q-1]]$-module. In particular, by Conjecture~\ref{ConjB} (cf.~Theorem~\ref{thmB}), it follows that $H^i_\sub{\'et}(X_{\bar{\bb Q}_p},\bb Z_p)$ is a finite free $\bb Z_p$-module. By the crystalline comparison theorem, $H^i_\sub{\'et}(X_{\bar{\bb Q}_p},\bb Z_p)$ is a lattice in a crystalline representation of $G_{\bb Q_p} = \mathrm{Gal}(\bar{\bb Q}_p/\bb Q_p)$.

The theory of Wach modules attaches to $H^i_\sub{\'et}(X_{\bar{\bb Q}_p},\bb Z_p)$ a finite free $\bb Z_p[[q-1]]$-module $M_p$ equipped with many semilinear operators, to be detailed below. We expect that
\[
M_p=H^i_{q\!\op-\!\dR}(X)\otimes_{\bb Z[\frac 1N][[q-1]]} \bb Z_p[[q-1]]\ ,
\]
and that these semilinear operators are intrinsically defined on $H^i_{q\!\op-\!\dR}(X)\otimes_{\bb Z[\tfrac 1N][[q-1]]} \bb Z_p[[q-1]]$.

There are the following semilinear operators on $M_p$. First, there is a Frobenius operator $\phi = \phi_p$. This operator is semilinear with respect to the map $\bb Z_p[[q-1]]\to \bb Z_p[[q-1]]$ which sends $q$ to $q^p$. This should correspond to the following operation on the $q$-de~Rham complex.

\begin{conjecture}\label{conjphi} Let $R$ be a smooth $\bb Z$-algebra, with $p$-adic completion $\hat{R}$. On the $p$-adic completion $\widehat{q\!\op-\!\Omega_R}$ of the $q$-de~Rham complex $q\!\op-\!\Omega_R$, there is a $\bb Z[[q-1]]$-semilinear (w.r.t.~$q\mapsto q^p$) endomorphism (of $E_\infty$-algebras)
\[
\phi_p: \widehat{q\!\op-\!\Omega_R}\to \widehat{q\!\op-\!\Omega_R}\ .
\]
If $\square: \bb Z[T_1,\ldots,T_d]\to R$ is a framing, then under the quasi-isomorphism $q\!\op-\!\Omega_R\cong q\!\op-\!\Omega^\bullet_{R[[q-1]]/\bb Z[[q-1]]}$, $\phi_p$ is induced by the map
\[
\phi\otimes (q\mapsto q^p): \hat{R}[[q-1]]\to \hat{R}[[q-1]]
\]
sending $T_i$ to $T_i^p$, and $q$ to $q^p$; more precisely, on $\hat{R}$, it is given by the Frobenius lift $\phi$ from (the discussion before) Proposition~\ref{propcartier}.
\end{conjecture}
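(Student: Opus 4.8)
We sketch a proof, conditional on Conjectures~\ref{ConjA} and~\ref{ConjC}: constructing $\phi_p$ in fixed coordinates is unconditional, and only its coordinate-independence uses the two conjectures, via the $A_\inf$-theory of~\cite{BMS}. So fix a framing $\square\colon\bb Z[T_1,\ldots,T_d]\to R$, write $q\!\op-\!\Omega_R^\square:=q\!\op-\!\Omega^\bullet_{R[[q-1]]/\bb Z[[q-1]],\square}$, and let $\widehat{q\!\op-\!\Omega_R^\square}$ denote its $p$-adic completion. On degree $0$ take the ring endomorphism $\phi\otimes(q\mapsto q^p)$ of $\hat R[[q-1]]$, where $\phi$ is the Frobenius lift on $\hat R$ from (the discussion before) Proposition~\ref{propcartier}. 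I would first check it commutes with each $\gamma_i$: on $\bb Z[T_1,\ldots,T_d][[q-1]]$ both composites send $T_i$ to $q^pT_i^p$ and fix $q^p$ and the other $T_j$, and this propagates to $\hat R[[q-1]]$ by uniqueness of lifts along the formally \'etale $\square$. In the logarithmic basis $d\log T_i=\tfrac{dT_i}{T_i}$, the rule ``multiply by $[p]_q^k$ in cohomological degree $k$'' then extends $\phi\otimes(q\mapsto q^p)$ to a chain endomorphism $\phi_p^\square$ of $\widehat{q\!\op-\!\Omega_R^\square}$: presenting the complex as the Koszul complex on the commuting operators $\delta_i=\tfrac{\gamma_i-1}{q-1}$, semilinearity sends $\delta_i$ to $\tfrac{\gamma_i-1}{q^p-1}=[p]_q^{-1}\delta_i$, which the inserted $[p]_q^k$ exactly cancel --- this is the identity $[p]_q=\tfrac{q^p-1}{q-1}$ in disguise. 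The resulting $\phi_p^\square$ is automatically multiplicative for the naive product, reduces modulo $q-1$ to the Frobenius $\phi$ on $\Omega^\bullet_{\hat R/\bb Z_p}$, and satisfies $\phi_p^\square(d\log T_i)=[p]_q\,d\log T_i$, the $q$-deformation of $d\log(T_i^p)=p\,d\log T_i$ --- which is why the conjecture records only the degree-$0$ data, the chain-map condition forcing the rest. That $\phi_p^\square$ upgrades to a map of $E_\infty$-algebras I would obtain by constructing the $E_\infty$-structure on $q\!\op-\!\Omega_R^\square$ functorially, so that a chain endomorphism induced from a self-map of the underlying diagram automatically respects it.

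The substantive point is that the $\phi_p^\square$ are compatible with the canonical quasi-isomorphisms $q\!\op-\!\Omega_R^\square\simeq q\!\op-\!\Omega_R^{\square'}$ of Conjecture~\ref{ConjA}, so that they glue to an endomorphism $\phi_p$ of $\widehat{q\!\op-\!\Omega_R}$. I would route this through~\cite{BMS}. Recall that $A\Omega_S$ carries a canonical Frobenius $\phi_{A\Omega}$, semilinear over the automorphism $\phi$ of $A_\inf$ with $\phi(q)=\phi([\epsilon])=[\epsilon]^p=q^p$, induced by the Witt-vector Frobenius on $A_\inf(\bar S)$ via the factorization $\phi(q-1)=(q-1)[p]_q$; being a natural transformation of the functor $A\Omega$ of Theorem~\ref{thmA}, it is coordinate-independent. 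Comparing the explicit Koszul models of Theorem~\ref{thmstrong}, one checks that under the identification $\widehat{q\!\op-\!\Omega_R^\square}\,\hat{\otimes}_{\bb Z_p[[q-1]]}A_\inf\simeq A\Omega_{R_\roi}$ supplied by Theorem~\ref{thmA} (legitimate once $\square$ sends the $T_i$ to units, and in general using the $v$-topology refinement noted after that theorem), the endomorphism $\phi_p^\square\,\hat{\otimes}\,\phi_{A_\inf}$ corresponds to $\phi_{A\Omega}$ --- the factors $[p]_q^k$ matching those produced by the factorization above. Granting Conjecture~\ref{ConjC}, in particular that the transition quasi-isomorphism $q\!\op-\!\Omega_R^\square\simeq q\!\op-\!\Omega_R^{\square'}$ base-changes to the canonical $A\Omega$-isomorphism, the $\phi_{A\Omega}$-equivariance of the latter forces $\phi_p^\square\,\hat{\otimes}\,\phi_{A_\inf}$ to commute with the base-changed transition map; since $\bb Z_p[[q-1]]\to A_\inf$ is faithfully flat and $\phi_{A_\inf}$ restricts to $q\mapsto q^p$ on $\bb Z_p[[q-1]]$, faithfully flat descent then yields that $\phi_p^\square$ commutes with the transition map itself. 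Running this over all pairs of framings produces the desired natural $\phi_p$.

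The hard part is exactly this coordinate-independence: it genuinely needs new input, and the only input currently available is the $A_\inf$-computation of~\cite{BMS}, which is why the argument stays conditional on Conjectures~\ref{ConjA} and~\ref{ConjC}. One must also take care that the descent is performed $\infty$-categorically, as it concerns a map of $E_\infty$-algebras and a homotopy-commuting square of such, not merely chain maps. An unconditional proof would require building $q\!\op-\!\Omega_R$ together with its coordinate-change quasi-isomorphisms directly --- the central open problem of the paper --- since not even the simplest transition map (for $y=x+1$) can presently be written down, and $\phi_p$ is genuine extra structure: it depends on $\square$ through the non-canonical Frobenius lift on $\hat R$ and is not determined by abstract properties of $q\!\op-\!\Omega_R$ alone.
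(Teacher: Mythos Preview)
The statement is a \emph{conjecture} in the paper, and the paper does not attempt to prove it. The only argument the paper offers is the short paragraph immediately following the conjecture, which checks that $\phi\otimes(q\mapsto q^p)$ commutes with the $\gamma_i$ and therefore extends to a chain endomorphism of the $p$-completed $q$-de~Rham complex, with the factor $[p]_q^i=(\tfrac{q^p-1}{q-1})^i$ appearing in degree $i$. Your first paragraph reproduces exactly this verification, and that part matches the paper.

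Your second and third paragraphs go well beyond the paper by proposing a conditional proof of coordinate-independence via descent from $A_\inf$. The idea is natural, but the descent step has a real gap. You want to conclude that two morphisms $f=\alpha\circ\phi_p^\square$ and $g=\phi_p^{\square'}\circ\alpha$ agree in the derived $\infty$-category over $\bb Z_p[[q-1]]$, knowing only that $f\hat\otimes A_\inf\simeq g\hat\otimes A_\inf$. Faithful flatness of $\bb Z_p[[q-1]]\to A_\inf$ detects equality of maps of ordinary modules, but for morphisms in $D(\bb Z_p[[q-1]])$ (let alone of $E_\infty$-algebras) one needs the base-change map on $\pi_0$ of mapping spaces to be injective. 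That would follow from $R\Hom(C,D)\hat\otimes A_\inf\simeq R\Hom(C\hat\otimes A_\inf,D\hat\otimes A_\inf)$, which typically requires the source to be perfect or compact; the $p$-completed $q$-de~Rham complex of a general smooth $\bb Z$-algebra is neither. You flag the need for $\infty$-categorical care, but that remark does not address this finiteness obstruction, and without it the ``faithfully flat descent then yields'' step is unjustified. A secondary point: Conjecture~\ref{ConjC} as stated only asserts a functorial isomorphism of $E_\infty$-algebras and says nothing about Frobenius compatibility, so the identification of $\phi_p^\square\hat\otimes\phi_{A_\inf}$ with $\phi_{A\Omega}$ is extra input rather than a consequence of the conjecture; your appeal to the explicit Koszul models is the right place to look, but it should be flagged as an additional hypothesis.
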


Note that the map $\phi\otimes (q\mapsto q^p): \hat{R}[[q-1]]\to \hat{R}[[q-1]]$ commutes with the natural automorphisms $\gamma_i$: On the source, they send $T_i$ to $qT_i$, and on the target, they send $T_i^p$ to $q^p T_i^p$. Thus, the map $\hat{R}[[q-1]]\to \hat{R}[[q-1]]$ does indeed induce an endomorphism of the $p$-completion of $q\!\op-\!\Omega^\bullet_{R[[q-1]]/\bb Z[[q-1]]}$ (involving multiplication by $(\frac{q^p-1}{q-1})^i$ in degree $i$).

On the other hand, there is a semilinear $\bb Z_p^\times$-action on the Wach module $M_p$. For any $a\in \bb Z_p^\times$, this is semilinear with respect to the map $\bb Z_p[[q-1]]\to \bb Z_p[[q-1]]$ which sends $q$ to
\[
q^a = (1+(q-1))^a = \sum_{n\geq 0} {a\choose n} (q-1)^n\ ;
\]
here, ${a\choose n}\in \bb Z_p$ is a well-defined element. We expect that this operation also comes from an operation on the $p$-adic completion of the $q$-de~Rham complex. Even better, if $a\in \bb Z_p^\times\cap \bb N$ is an integer, we expect that this operation is defined before taking the $p$-adic completion.

\begin{conjecture}\label{conjgamma} Let $R$ be a smooth $\bb Z$-algebra, and let $a\neq 0$ be an integer. Assume that $a$ is invertible on $R$. Then there is a $\bb Z[[q-1]]$-semilinear (w.r.t.~$q\mapsto q^a$) automorphism (of $E_\infty$-algebras)
\[
\gamma_a: q\!\op-\!\Omega_R\cong q\!\op-\!\Omega_R\ .
\]
If $\square: \bb Z[T_1,\ldots,T_d]\to R$ is a framing, then under the quasi-isomorphism $q\!\op-\!\Omega_R\cong q\!\op-\!\Omega^\bullet_{R[[q-1]]/\bb Z[[q-1]]}$, $\gamma_a$ is given by a map of complexes
\[\xymatrix{
R[[q-1]] \ar[r]^{\nabla_q}\ar[d]_{q\mapsto q^a} & \Omega^1_{R[[q-1]]/\bb Z[[q-1]]}\ar[r]^{\nabla_q}\ar[d] & \ldots\\
R[[q-1]] \ar[r]^{\nabla_q} & \Omega^1_{R[[q-1]]/\bb Z[[q-1]]}\ar[r] & \ldots\ ,
}\]
where the left vertical map is the identity on $R$ and sends $q$ to $q^a$.
\end{conjecture}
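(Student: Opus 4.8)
The plan is to grant Conjecture~\ref{ConjA}, so that the functor $R\mapsto q\!\op-\!\Omega_R$ and its framed models are available, and to realize $\gamma_a$ as a tautological semilinear base change followed by an explicit comparison isomorphism computed in coordinates. Write $\psi_a\colon\bb Z[[q-1]]\to\bb Z[[q-1]]$ for $q\mapsto q^a$, and for a framing $\square$ abbreviate $q\!\op-\!\Omega_R^\square:=q\!\op-\!\Omega^\bullet_{R[[q-1]]/\bb Z[[q-1]],\square}$, with $q^a\!\op-\!\Omega_R^\square$ the analogous complex with $q$ replaced by $q^a$ throughout (so $\gamma_i$ is replaced by its $a$-fold composite $\gamma_i^a$, which by uniqueness of \'etale lifts is the automorphism $T_i\mapsto q^aT_i$, and $q-1$ by $q^a-1$). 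Since $a$ is invertible on $R$, one has $[a]_q\equiv a\pmod{q-1}$, so $[a]_q$ is a unit in $R[[q-1]]$ and $\psi_a$ is an automorphism of the ring $R[[q-1]]$; hence $q^a\!\op-\!\Omega_R:=q\!\op-\!\Omega_R\hat{\otimes}_{\bb Z[[q-1]],\psi_a}\bb Z[[q-1]]$ is again a functor from smooth $\bb Z$-algebras on which $a$ is invertible to $E_\infty$-$\bb Z[[q-1]]$-algebras, computed in coordinates by $q^a\!\op-\!\Omega_R^\square$ (the base change of $q\!\op-\!\Omega_R^\square$ along $\psi_a$, which is underived as the terms are $(q-1)$-torsion-free). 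So producing $\gamma_a$ reduces to producing a natural, framing-independent isomorphism of $E_\infty$-$\bb Z[[q-1]]$-algebras $q^a\!\op-\!\Omega_R\xrightarrow{\sim}q\!\op-\!\Omega_R$, after which $\gamma_a$ is that isomorphism precomposed with the tautological $\psi_a$-semilinear map $q\!\op-\!\Omega_R\to q^a\!\op-\!\Omega_R$.

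For the comparison in coordinates I would use $\gamma_i^a-1=(\gamma_i-1)[a]_{\gamma_i}$, with $[a]_{\gamma_i}:=1+\gamma_i+\dots+\gamma_i^{a-1}$ for $a>0$ (and the evident variant for $a<0$), together with $q^a-1=(q-1)[a]_q$. Because the $q$-derivative is integral, $\gamma_i\equiv\mathrm{id}\pmod{q-1}$ as continuous operators on $R[[q-1]]$, so $[a]_{\gamma_i}\equiv a\pmod{q-1}$; as $a$ is invertible on $R$ and $R[[q-1]]$ is $(q-1)$-adically complete, $[a]_{\gamma_i}$ is an automorphism of $R[[q-1]]$. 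Therefore the $i$-th component of the differential of $q^a\!\op-\!\Omega_R^\square$ equals $[a]_q^{-1}[a]_{\gamma_i}$ composed with the $i$-th component of the differential of $q\!\op-\!\Omega_R^\square$, and the map which is the identity in degree $0$ and, on the summand indexed by a $k$-element subset $I$ in degree $k$, is multiplication by the automorphism $\prod_{i\in I}[a]_q[a]_{\gamma_i}^{-1}$, is an isomorphism of complexes $q^a\!\op-\!\Omega_R^\square\xrightarrow{\sim}q\!\op-\!\Omega_R^\square$; that it is a chain map uses only that $\gamma_j$ commutes with $[a]_{\gamma_i}$ for $i\neq j$. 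Composing with the tautological semilinear map recovers exactly the map of complexes in the statement: ``$q\mapsto q^a$, the identity on $R$'' in degree $0$, and the unlabelled vertical arrows given by $q\mapsto q^a$ followed by the above multiplications. One then checks the formal compatibilities ($\gamma_1=\mathrm{id}$, and $\gamma_a\gamma_b=\gamma_{ab}$ whenever all are defined), and expects that for $a\in\bb Z_p^\times$ this recovers the semilinear $\Gamma$-action on the Wach module $M_p$, while after completed base change to $A_\inf$ along $q\mapsto[\epsilon]$ it becomes the operation induced by $\epsilon\mapsto\epsilon^a$.

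The hard part will be two intertwined points, each essentially as deep as Conjecture~\ref{ConjA} itself: the isomorphism just constructed depends on $\square$, through the $\gamma_i$ and the chosen basis of differentials, so it must be shown compatible, up to coherent homotopy, with the canonical quasi-isomorphisms of Conjecture~\ref{ConjA} for two framings (so that it descends to the global statement); and it must be promoted to a morphism of $E_\infty$-algebras, which is invisible from the formula above since $[a]_{\gamma_i}$ is a ring-twisted, not $R[[q-1]]$-linear, operator and so need not respect the $q$-Leibniz-type products in positive degrees. I expect both to be automatic from whatever mechanism proves Conjecture~\ref{ConjA}: if $q\!\op-\!\Omega_R$ arises from a descent- or site-theoretic construction manifestly functorial over $\Spf\bb Z[[q-1]]$, then base change along $\psi_a$ and the ensuing comparison come for free as $E_\infty$-algebras, as $\psi_a$ is an automorphism of $\bb Z[[q-1]]$ fixing $R$. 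Unconditionally, one can run the argument after $p$-adic completion for $a\in\bb Z_p^\times$ via the $A_\inf$-theory of \cite{BMS}: there $A\Omega_S=L\eta_{q-1}R\Gamma(\Delta,A_\inf(\bar S))$ with $\bar S$ and $A_\inf(\bar S)$ built without coordinates, and when $S$ descends to a $p$-adically complete smooth $\bb Z_p$-algebra the natural $G_{\bb Q_p}$-action on $A_\inf$ --- acting on $[\epsilon]$ through the cyclotomic character $\chi\colon G_{\bb Q_p}\to\bb Z_p^\times$ by $[\epsilon]\mapsto[\epsilon]^{\chi(g)}$ --- induces a $\psi_a$-semilinear automorphism of $A\Omega_S$, compatible with the $E_\infty$-structure since $L\eta_{q-1}$ is lax symmetric monoidal, and resting only on Faltings's almost purity theorem (Theorem~\ref{thmalmostpurity}). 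Matching this with the explicit coordinate formula via Theorem~\ref{thmstrong} yields the conjecture after $p$-completion, and the compatibility Conjecture~\ref{ConjC} would bridge to the integral statement.
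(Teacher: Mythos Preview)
The statement you are addressing is a \emph{conjecture} in the paper, not a theorem; the paper offers no proof. After stating Conjecture~\ref{conjgamma}, the paper only remarks that ``one can express $\nabla_{q^a}$ through $\nabla_q$ (and conversely) as long as one works over $\bb Z[\tfrac 1a][[q-1]]$'' and that $q\mapsto q^a$ is an automorphism of $\bb Z[\tfrac 1a][[q-1]]$. That is the full extent of the paper's discussion.

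Your coordinate computation is a fleshed-out version of exactly this remark: the identity $\gamma_i^a-1=(\gamma_i-1)[a]_{\gamma_i}$ together with $q^a-1=(q-1)[a]_q$ is precisely how one expresses $\nabla_{q^a}$ through $\nabla_q$, and the invertibility of $[a]_{\gamma_i}$ and $[a]_q$ when $a\in R^\times$ is what makes the vertical maps isomorphisms. So on the level of framed complexes your proposal matches and elaborates the paper's hint. You also correctly isolate the genuine content of the conjecture: that this framed map is compatible with the (conjectural) coordinate-change quasi-isomorphisms of Conjecture~\ref{ConjA} and respects the $E_\infty$-structure. The paper does not claim to know how to do this either; your assessment that it is ``essentially as deep as Conjecture~\ref{ConjA} itself'' is accurate, and your expectation that any site-theoretic or descent-theoretic proof of Conjecture~\ref{ConjA} would yield $\gamma_a$ for free is the right intuition.

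Your unconditional $A_\inf$ argument via the $G_{\bb Q_p}$-action through the cyclotomic character is not in the paper but is a natural and correct observation: it gives the $p$-completed version of $\gamma_a$ for $a\in\bb Z_p^\times$ on $A\Omega_S$ when $S$ descends to $\bb Z_p$, and is consistent with the paper's later remark linking $\gamma_a$ to the semilinear $\bb Z_p^\times$-action on Wach modules. This is a genuine partial result beyond what the paper states, though it does not settle the conjecture over $\bb Z[[q-1]]$.
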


In principle, one can make the map of complexes explicit. For this, one observes that one can express $\nabla_{q^a}$ through $\nabla_q$ (and conversely) as long as one works over $\bb Z[\tfrac 1a][[q-1]]$. Related to this, observe that the endomorphism $\bb Z[\tfrac 1a][[q-1]]\to \bb Z[\tfrac 1a][[q-1]]$ sending $q$ to $q^a$ is an automorphism.

Also note that if $a$ is not invertible in $R$, one can still apply $\gamma_a$ to
\[
q\!\op-\!\Omega_{R[\frac 1a]} = q\!\op-\!\Omega_R\hat{\otimes}_{\bb Z[[q-1]]} \bb Z[\tfrac 1a][[q-1]]\ .
\]
In particular, for any prime $p$, we have two associated automorphisms $\phi_p$, $\gamma_p$: The first acts on the $p$-adic completion, while the second acts after inverting $p$. One can compare the actions after base change to $\bb Q_p[[q-1]]$, i.e.~on the $(q-1)$-adic completion of $\widehat{q\!\op-\!\Omega_R}[\tfrac 1p]$. Over this ring, the $q$-de~Rham complex becomes isomorphic to the constant extension of de~Rham cohomology by Lemma~\ref{compqdRdR}, which in this case agrees with the crystalline cohomology of $R/p$. On crystalline cohomology of $R/p$, one also has a Frobenius action (which is Frobenius-semilinear on $\hat{R}$); this can be extended $\bb Z[[q-1]]$-linearly to an action on the $(q-1)$-adic completion of $\widehat{q\!\op-\!\Omega_R}[\tfrac 1p]$. We expect that the actions of $\phi_p$ and $\gamma_p$ differ by the action of this crystalline Frobenius.

We note in particular that after extending the scalars to $\bb Q[[q-1]]$, the $q$-de~Rham complex acquires an action by $\bb Q^\times$. Under the identification
\[
q\!\op-\!\Omega_R\hat{\otimes}_{\bb Z[[q-1]]} \bb Q[[q-1]]\cong \Omega^\bullet_{R/\bb Z}\hat{\otimes}_{\bb Z} \bb Q[[q-1]]\ ,
\]
this corresponds to the action on the second factor, via the semilinear automorphisms $\bb Q[[q-1]]\to \bb Q[[q-1]]$, $q\mapsto q^a$. Note that one also has an identification $\bb Q[[q-1]]\cong \bb Q[[h]]$, where $q=\exp(h)$; under this identification, the $\bb Q^\times$-action becomes the scaling action $h\mapsto ah$, $a\in \bb Q^\times$.

\begin{example}\label{ex:tatetwist} As an example, let us discuss the Tate twist in this setup. Namely, let $\bb Z[[q-1]]\{-1\} := H^2_{q\!\op-\!\dR}(\bb P^1_{\bb Z})$.\footnote{The notation $\{1\}$ in place of $(1)$ for the Tate twist follows notation introduced in \cite{BMS}. The reason is that $A_\inf\{1\}:=A_\inf\otimes_{\bb Z[[q-1]]} \bb Z[[q-1]]\{1\}$ is not equal to $A_\inf(1) := A_\inf\otimes_{\bb Z_p} \bb Z_p(1)$, but there is a canonical map $A_\inf(1)\to A_\inf\{1\}$ with image given by $(q-1)A_\inf\{1\}$; there is thus some need to distinguish between those two similar but different objects.} By comparison with de~Rham cohomology, it follows that $\bb Z[[q-1]]\{-1\}$ is a free $\bb Z[[q-1]]$-module of rank $1$. We expect, cf.~Section~\ref{sec:examples} below, that there is a canonical basis element $e\in \bb Z[[q-1]]\{-1\}$ on which the actions of $\phi_p$ and $\gamma_a$ are given as follows:
\[\begin{aligned}
\phi_p(e) &= \frac{q^p-1}{q-1} e = [p]_q e\ ,\\
\gamma_a(e) &= \frac 1a \frac{q^a-1}{q-1} e = \frac{[a]_q}{a} e\ .
\end{aligned}\]
Recall that after base extension to $\bb Q[[q-1]]$, $q$-de~Rham cohomology becomes isomorphic to the constant extension of de~Rham cohomology. In this case, the resulting isomorphism
\[
\bb Z[[q-1]]\{-1\}\otimes_{\bb Z[[q-1]]} \bb Q[[q-1]]\cong H^2_\dR(\bb P^1_{\bb Z})\otimes_{\bb Z} \bb Q[[q-1]] = \bb Q[[q-1]] \omega\ ,
\]
where $\omega\in H^2_\dR(\bb P^1_{\bb Z})$ is the standard generator, is given by sending $e$ to $\frac{q-1}{\log(q)} \omega$. Note that indeed
\[
\gamma_a\left(\frac{q-1}{\log(q)} \omega\right) = \frac{q^a-1}{\log(q^a)} \omega = \frac 1a \frac{q^a-1}{q-1}\cdot \frac{q-1}{\log(q)} \omega\ ,
\]
compatibly with $\gamma_a(e) = \frac 1a \frac{q^a-1}{q-1} e$.
\end{example}

\begin{remark} One may wonder whether the theory also connects to complex Hodge theory. As after base extension to $\bb C[[q-1]]\cong \bb C[[h]]$ (in fact, already to $\bb Q[[q-1]]$), one gets a constant extension of de~Rham cohomology, there are certainly ways to glue the $q$-de~Rham complex with structures from complex Hodge theory. One possibility is to interpret a Hodge structure as a $\bb G_m$-equivariant sheaf on the twistor $\bb P^1$, cf.~\cite{simpson}. By passing to the completion at $0$ (or $\infty$), one gets a $\bb C[[h]]$-module which is $\bb G_m$-equivariant under the scaling action, and in particular $\bb Q^\times$-equivariant.
\end{remark}

\section{$q$-connections}\label{sec:qconn}

We want to discuss briefly the expected relative theory. Roughly, we conjecture that the Gau\ss-Manin connection deforms to a $q$-connection.

More precisely, assume that $R$ is a smooth $\bb Z$-algebra, with framing $\square$, and $f: X\to \Spec R$ is a proper smooth morphism. For simplicity, we assume that all relative de Rham cohomology groups $H^i_\dR(X/R) = \bb H^i(X,\Omega^\bullet_{X/R})$ are locally free on $\Spec R$; this can be ensured after inverting some big enough integer.

\begin{conjecture}\label{conjsmooth} Fix a framing $\square: \bb Z[T_1,\ldots,T_d]\to R$ of $R$. There is a natural functor $S\mapsto q\!\op-\!\Omega_{S/R,\square}$ from the category of smooth $R$-algebras $S$ to the category of $E_\infty$-$R[[q-1]]$-algebras, with $q\!\op-\!\Omega_{S/R,\square}/(q-1) = \Omega_{S/R}$.
\end{conjecture}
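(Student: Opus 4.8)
The plan is to mimic, in the relative setting, the construction behind Conjecture~\ref{ConjA}, and then to deduce the required coordinate--independence from a relative form of Theorem~\ref{thmA}.

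\textbf{Construction.} Since the statement is Zariski--local on $\Spec S$, we may assume $S$ admits a relative framing, i.e.\ an \'etale map $\square':R[T_{d+1},\ldots,T_{d+e}]\to S$; composed with $\square$ this yields an \'etale map $\bb Z[T_1,\ldots,T_{d+e}]\to S$. For $i=d+1,\ldots,d+e$ let $\gamma_i$ be the automorphism of $\bb Z[T_1,\ldots,T_{d+e}][[q-1]]$ with $\gamma_i(T_i)=qT_i$ and $\gamma_i(T_j)=T_j$ for $j\neq i$; by formal \'etaleness these lift uniquely to automorphisms of $S[[q-1]]$ (the $(q-1)$-adic completion being essential for this). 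The point is that \emph{only} the relative coordinates are deformed: the automorphisms $\gamma_1,\ldots,\gamma_d$ attached to $\square$ are not used. Set
\[
q\!\op-\!\Omega^\bullet_{S/R,\square,\square'}\ :=\ \Big(S[[q-1]]\ \xto{\nabla_q}\ \Omega^1_{S/R}[[q-1]]\ \to\ \cdots\ \to\ \Omega^e_{S/R}[[q-1]]\Big),
\]
with $\nabla_q(f)=\sum_{i=d+1}^{d+e}\frac{\gamma_i(f)-f}{qT_i-T_i}\,dT_i$ and the higher differentials given by the usual Koszul-type formulas. Because each $\gamma_i$ fixes $R[[q-1]]$, this is a complex of $R[[q-1]]$-modules; it carries a natural $E_\infty$-$R[[q-1]]$-algebra structure exactly as in the absolute case, and reduction modulo $q-1$ recovers $\Omega^\bullet_{S/R}$. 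It is manifestly functorial in the framed $R$-algebra $(S,\square')$.

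\textbf{Independence of the relative framing.} It remains to promote this to a functor $S\mapsto q\!\op-\!\Omega_{S/R,\square}$ on all smooth $R$-algebras, i.e.\ to show $q\!\op-\!\Omega^\bullet_{S/R,\square,\square'}$ is independent of $\square'$ up to canonical quasi-isomorphism, coherently. I would establish this by running the perfectoid machinery of \cite{BMS} relatively. Fix a prime $p$, and let $R_\roi,S_\roi$ be the $p$-adic completions of $R\otimes_{\bb Z}\roi$ and $S\otimes_{\bb Z}\roi$, so $S_\roi$ is $p$-adically complete and smooth over $R_\roi$. One builds a relative complex $A\Omega_{S_\roi/R_\roi}$ as $L\eta_{q-1}$ of a relative $A_\inf$-cohomology: take the pro-finite-\'etale tower over $\Spec S[\tfrac1p]$ obtained by adjoining all $p$-power roots of $T_{d+1},\ldots,T_{d+e}$ (the analogue of $S_\infty$), together with the maximal pro-finite-\'etale cover unramified in the relative directions (the analogue of $\bar S$), form the associated Witt-vector rings $A_\inf(S_\infty)$, $A_\inf(\bar S)$, and apply Faltings's almost purity theorem~\ref{thmalmostpurity} to the relative cover. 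The relative analogues of Theorems~\ref{thmweak} and~\ref{thmstrong}---whose proofs are the same local Koszul computations, now carried out in the relative coordinates only---then produce a natural quasi-isomorphism $q\!\op-\!\Omega^\bullet_{A(S)^{\square'}/A(R)^{\square}}\to A\Omega_{S_\roi/R_\roi}$ (with $A(S)^{\square'}$, $A(R)^{\square}$ the relative $A_\inf$-deformations attached to the framings), and the target is visibly independent of $\square'$ and functorial in $S$. Matching this, after the base change $q\mapsto[\ep]$, with $q\!\op-\!\Omega^\bullet_{S/R,\square,\square'}\hat{\otimes}_{\bb Z[[q-1]]}A_\inf$ as in Conjecture~\ref{ConjC}, and using that $\bb Z_p[[q-1]]\to A_\inf$ is faithfully flat for every $p$, one can hope to descend the canonical quasi-isomorphisms back to $\bb Z[[q-1]]$; functoriality is then bootstrapped by pro-\'etale (or $v$-)descent exactly as in the absolute construction. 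An alternative, granting Conjecture~\ref{ConjA} for $S$, is to identify $q\!\op-\!\Omega^\bullet_{S/R,\square,\square'}$ with the ``relative factor'' in the Koszul tensor-decomposition of the absolute complex $q\!\op-\!\Omega^\bullet_{S/\bb Z,\square'}$ for a framing $\square'$ of $S$ compatible with $\square$, and to deduce independence under changes of $\square'$ from the absolute statement---provided one checks that the coordinate-change quasi-isomorphisms respect this decomposition.

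\textbf{Main obstacle.} The genuinely hard point is the same one that leaves Conjecture~\ref{ConjA} open: passing from $A_\inf$, where coordinate-independence is known, back to $\bb Z[[q-1]]$, and upgrading ``canonical up to quasi-isomorphism'' to an honest functor to $E_\infty$-algebras, which needs $\infty$-categorical descent rather than naive gluing. A purely relative difficulty is that the relative generic fibre is not proper, so one must be careful both about which pro-finite-\'etale tower trivializes \emph{exactly} the relative ramification and about the precise sense in which $L\eta_{q-1}$ converts the relative almost quasi-isomorphisms supplied by almost purity into honest quasi-isomorphisms; as in the proof of Theorem~\ref{thmstrong} this should work because the source of the comparison is a (relatively) finite \'etale object, but making this rigorous in the relative setting is where most of the work lies. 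Finally, as the footnote to the conjecture warns, in the relative situation there may well be several inequivalent ways to make the complex coordinate-independent, so pinning down the ``correct'' $q$-deformation of the Gau\ss--Manin connection is itself part of the problem.
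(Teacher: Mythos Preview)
The statement you are addressing is a \emph{conjecture} in the paper, not a theorem: the paper gives no proof of it and does not claim one. After stating Conjecture~\ref{conjsmooth}, the paper only remarks that ``in coordinates, one can write down $q\!\op-\!\Omega_{S/R,\square}$ as before; we leave the details to the reader,'' which is exactly your Construction paragraph. Your explicit complex $q\!\op-\!\Omega^\bullet_{S/R,\square,\square'}$ is the intended coordinate-dependent model, and the reduction modulo $q-1$ is immediate. So on the constructive side there is nothing to compare: you have filled in what the paper deliberately omitted, and done so correctly.

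Where your proposal goes beyond the paper is in the ``Independence of the relative framing'' and ``Main obstacle'' paragraphs. Here there is no paper proof to measure against; the independence of $\square'$ is precisely the conjectural content. Your own diagnosis is accurate: the obstruction is the same descent problem (from $A_\inf$ back to $\bb Z[[q-1]]$, and from ``canonical up to quasi-isomorphism'' to an honest functor of $E_\infty$-algebras) that leaves Conjecture~\ref{ConjA} open. Neither of the two routes you sketch---a relative version of the $A\Omega$-construction plus faithfully flat descent along $\bb Z_p[[q-1]]\to A_\inf$ for all $p$, or factoring the absolute $q$-de~Rham complex---is carried out in the paper, and both inherit the unresolved issues you name (descent of $E_\infty$-structures, the correct relative pro-\'etale tower, and the warning in the paper's footnote that the relative theory may admit several inequivalent coordinate-independent realizations). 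In short: your Construction matches the paper's intent; your Strategy for independence is a reasonable outline of what one would try, but it is not a proof, and the paper does not supply one either.
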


In coordinates, one can write down $q\!\op-\!\Omega_{S/R,\square}$ as before; we leave the details to the reader. Note that $q\!\op-\!\Omega_{S/R,\square}$ should be independent of a choice of coordinates for $S$ (over $R$); it does however depend on the choice of coordinates for the base $R$. The choice of $\square$ gives a ``stupid truncation map''
\[
q\!\op-\!\Omega_R\cong q\!\op-\!\Omega^\bullet_{R[[q-1]]/\bb Z[[q-1]]}\to R[[q-1]]\ ,
\]
which is a map of $E_\infty$-$\bb Z[[q-1]]$-algebras. There should be a natural map
\[
q\!\op-\!\Omega_S\hat{\otimes}_{q\!\op-\!\Omega_R} R[[q-1]]\to q\!\op-\!\Omega_{S/R,\square}\ ,
\]
where the tensor product is taken in the category of $E_\infty$-algebras, and $(q-1)$-adically completed. We expect that this map is an equivalence after $p$-adic completion for all $p$ (as the similar map
\[
\Omega^\bullet_S\otimes_{\Omega^\bullet_R} R\to \Omega^\bullet_{S/R}
\]
is an equivalence after $p$-adic completion for all $p$, but not in general over $\bb Q$). Again, after base change to $\bb Q[[q-1]]$, $q\!\op-\!\Omega_{S/R}$ should become isomorphic to $(\Omega_{S/R}^\bullet\otimes_{\bb Z} \bb Q)[[q-1]]$, and it should be possible to glue this together with the profinite completion of $q\!\op-\!\Omega_R\hat{\otimes}_{q\!\op-\!\Omega_R} R[[q-1]]$ to get $q\!\op-\!\Omega_{S/R,\square}$.

Assuming the conjecture, we can now define
\[
H^i_{q\!\op-\!\dR,\square}(X/R) = \bb H^i(X,q\!\op-\!\Omega_{X/R,\square})\ .
\]
In analogy with a result from \cite[Corollary 4.20, Theorem 14.5 (iii)]{BMS}, we expect the following conjecture.

\begin{conjecture} Assume that $H^i_\dR(X/R)$ is a finite projective $R$-module for all $i\in \bb Z$. Then $H^i_{q\!\op-\!\dR,\square}(X/R)$ is a finite projective $R[[q-1]]$-module for all $i\in \bb Z$.
\end{conjecture}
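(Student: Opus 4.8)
I grant Conjecture~\ref{conjsmooth}, so that $q\!\op-\!\Omega_{X/R,\square}$ exists as a global complex computed, locally on $\Spec R$, by the explicit coordinate complex --- a bounded complex of finite locally free $\mathcal O_X[[q-1]]$-modules --- and set $C := R\Gamma(X, q\!\op-\!\Omega_{X/R,\square}) \in D(R[[q-1]])$, so that $H^i_{q\!\op-\!\dR,\square}(X/R) = H^i(C)$. First, $C$ is a perfect complex of $R[[q-1]]$-modules: modulo $(q-1)^n$ the complex $q\!\op-\!\Omega_{X/R,\square}/(q-1)^n$ is a bounded complex of coherent $\mathcal O_X$-modules, so its $Rf_\ast$ is perfect over $R[q]/(q-1)^n$ (as $f\colon X\to\Spec R$ is proper and $R$ Noetherian), and since $C$ is derived $(q-1)$-complete, derived Nakayama over the $(q-1)$-adically complete ring $R[[q-1]]$ upgrades this to perfectness of $C$. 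Second, since locally the terms of $q\!\op-\!\Omega_{X/R,\square}$ are $(q-1)$-torsion free with $q\!\op-\!\Omega_{X/R,\square}/(q-1) = \Omega^\bullet_{X/R}$, and reduction modulo $(q-1)$ is computed by a two-term Koszul complex (hence commutes with $R\Gamma$), one has $C\dotimes_{R[[q-1]]}R \simeq R\Gamma_\dR(X/R)$; by hypothesis each $H^i_\dR(X/R)$ is finite projective over $R$, say of rank $r_i$ (assume $\Spec R$ connected, hence $R$ a domain; otherwise argue componentwise), so this perfect complex is formal: $R\Gamma_\dR(X/R)\simeq\bigoplus_i H^i_\dR(X/R)[-i]$.

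The crux is a fibre-dimension computation for $C$. For a prime $\mathfrak p\supseteq(q-1)$, i.e.~a point of $V(q-1)=\Spec R$ with residue field $\kappa$, the map $R[[q-1]]\to\kappa$ factors through $R$, so
\[
C\dotimes_{R[[q-1]]}\kappa \simeq R\Gamma_\dR(X/R)\dotimes_R\kappa \simeq \bigoplus_i \bigl(H^i_\dR(X/R)\otimes_R\kappa\bigr)[-i]\ ,
\]
whence $\dim_\kappa H^i(C\dotimes\kappa)=r_i$ for all $i$. Now $q-1$ lies in the Jacobson radical of $R[[q-1]]$ (for any $g$, $1-(q-1)g$ is a unit, by $(q-1)$-adic convergence of $\sum_{m\geq 0}((q-1)g)^m$), so every maximal ideal of $R[[q-1]]$ belongs to $V(q-1)$; combined with the upper semicontinuity of $\mathfrak p\mapsto\dim_{\kappa(\mathfrak p)}H^i(C\dotimes\kappa(\mathfrak p))$, this forces that function to be $\leq r_i$ identically. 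Conversely, the relative analogue of Lemma~\ref{compqdRdR} (the same Taylor-series comparison over $\bb Q[[q-1]]$) identifies $C$ over $\bb Q[[q-1]]$ with the constant extension of $R\Gamma_\dR(X/R)$, so $\dim H^i(C\dotimes\kappa(\eta))=r_i$ at the generic point $\eta$ of $R[[q-1]]$; by semicontinuity the function is also $\geq r_i$ identically. Hence $\dim_{\kappa(\mathfrak p)}H^i(C\dotimes\kappa(\mathfrak p))=r_i$ for every $\mathfrak p$ and every $i$.

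It remains to upgrade constant fibre dimensions to finite projectivity, using that $R[[q-1]]$ is regular, in particular reduced. Localize at a prime $\mathfrak p$ and choose a minimal perfect model $P^\bullet$ of $C_{\mathfrak p}$ (differentials with entries in $\mathfrak p R[[q-1]]_{\mathfrak p}$); then $\mathrm{rk}\,P^i=\dim H^i(C\dotimes\kappa(\mathfrak p))=r_i$. Base-changing $P^\bullet$ to $\Frac(R[[q-1]])$ gives a complex of $\Frac(R[[q-1]])$-vector spaces whose term-dimensions $r_i$ equal its cohomology dimensions (still $r_i$, by the previous paragraph), hence with zero differentials; since $R[[q-1]]_{\mathfrak p}$ is a domain, the differentials of $P^\bullet$ themselves vanish, so $H^i(C)_{\mathfrak p}=P^i$ is free of rank $r_i$. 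Therefore $H^i_{q\!\op-\!\dR,\square}(X/R)=H^i(C)$ is finite projective over $R[[q-1]]$ of rank $r_i=\mathrm{rk}_R H^i_\dR(X/R)$, and $C\simeq\bigoplus_i H^i(C)[-i]$.

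The genuine obstacle is upstream of this argument: it uses Conjecture~\ref{conjsmooth} itself --- the existence of the functor $q\!\op-\!\Omega_{X/R,\square}$ computed in coordinates by a perfect complex --- and the relative analogue of Lemma~\ref{compqdRdR}, the latter being expected to be a routine extension of the absolute case. An alternative that circumvents constructing $q\!\op-\!\Omega_{X/R,\square}$ over $\bb Z$ is to base change along the faithfully flat map $R[[q-1]]\to A_\inf$, identify the result with a relative version of $A\Omega$ via a relative form of Conjecture~\ref{ConjC}, invoke Theorems~\ref{thmA} and~\ref{thmB} together with the structure theory of finitely presented $A_\inf$-modules in \cite{BMS}, and descend finite projectivity by faithfully flat descent; this presupposes the relative comparison with $A\Omega$, so it is not obviously simpler.
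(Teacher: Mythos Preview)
The paper offers no proof of this statement: it is stated as a conjecture, with only a pointer to the analogous results \cite[Corollary~4.20, Theorem~14.5~(iii)]{BMS} over $A_\inf$. So there is nothing to compare your argument against; rather, you are supplying the deformation-theoretic argument the paper alludes to, conditional on Conjecture~\ref{conjsmooth} and a relative form of Lemma~\ref{compqdRdR}. Granting those inputs, your strategy is sound: perfectness of $C$ by derived Nakayama, the computation $C\dotimes_{R[[q-1]]}R\simeq R\Gamma_\dR(X/R)$, constancy of the fibre dimensions via semicontinuity squeezed between closed points (all in $V(q-1)$, since $q-1$ lies in the Jacobson radical) and the generic point, and then the minimal-complex argument over the regular domain $R[[q-1]]_{\mathfrak p}$. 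This is exactly the kind of argument used in \cite{BMS} for the $A_\inf$-analogue.

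One technical wrinkle worth tightening: the map $\bb Z[[q-1]]\to\kappa(\eta)=\Frac(R[[q-1]])$ does \emph{not} factor through $\bb Q[[q-1]]$ (the Taylor series in the proof of Lemma~\ref{compqdRdR} have unbounded denominators, so $\bb Q[[q-1]]$ is genuinely larger than $\bb Z[[q-1]]\otimes\bb Q$), and hence the relative Lemma~\ref{compqdRdR} does not literally compute $C\dotimes\kappa(\eta)$. The fix is easy: the map $R[[q-1]]\to (R\otimes\bb Q)[[q-1]]$ is flat and injective between domains, so it induces an extension of fraction fields $\kappa(\eta)\hookrightarrow\Frac\bigl((R\otimes\bb Q)[[q-1]]\bigr)$; compute $H^i(C)\otimes(R\otimes\bb Q)[[q-1]]$ as free of rank $r_i$ via the relative Lemma, and then read off $\dim_{\kappa(\eta)}H^i(C\dotimes\kappa(\eta))=r_i$ after the field extension. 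With this adjustment your squeeze argument goes through, and the final paragraph (minimal complex, vanishing of differentials over $\Frac$, hence over the domain $R[[q-1]]_{\mathfrak p}$) is correct as written. Your closing remark that the genuine content lies upstream in Conjecture~\ref{conjsmooth} is exactly the paper's own position.
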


Moreover, $H^i_{q\!\op-\!\dR,\square}(X/R)$ should admit a $q$-connection in the following sense.

\begin{definition} Let $R$ be a smooth $\bb Z$-algebra with framing $\square:\bb Z[T_1,\ldots,T_d]\to R$. A finite projective $R[[q-1]]$-module with (flat) $q$-connection w.r.t.~the framing $\square$ is a finite projective $R[[q-1]]$-module $M$ equipped with $d$ commuting maps $\nabla_{q,1},\ldots,\nabla_{q,d}: M\to M$, satisfying the $q$-Leibniz rule for $f\in R$, $m\in M$:
\[
\nabla_{q,i}(fm) = \gamma_i(f) \nabla_{q,i}(m) + \nabla_{q,i}(f) m\ ,
\]
where as before $\nabla_{q,i}(f) = \frac{\gamma_i(f)-f}{qT_i-T_i}$.
\end{definition}

\begin{remark} This is an instance of a general definition that has been widely studied before; we refer to an article by Andr\'e, \cite{andreq}. In particular, it is proved there that the category of $q$-connections is naturally a symmetric monoidal category. It is however a nontrivial exercise to write down the symmetric monoidal tensor product for this category, as the naive formulas will not respect the $q$-Leibniz rule (due to its noncommutativity). This phenomenon is related to the fact that the $q$-de~Rham complex is an $E_\infty$-algebra, but it cannot be represented by a commutative differential graded algebra, cf.~\cite[Remark 7.8]{BMS}.

We warn the reader that the category of modules with $q$-connection is not directly related to the category of modules over the $E_\infty$-algebra $q\!\op-\!\Omega_R$. There is a functor (given by a $q$-de~Rham complex) from the category of modules with $q$-connection to the category of modules over $q\!\op-\!\Omega_R$, but it is not fully faithful. In fact, the same phenomenon happens classical for modules with integrable connection and modules over $\Omega_R$.
\end{remark}

In fact, we conjecture that the category of modules with $q$-connections is independent of the choice of coordinates:

\begin{conjecture}\label{conjmodule} Let $R$ be a smooth $\bb Z$-algebra. Then the symmetric monoidal category of finite projective $R[[q-1]]$-modules with $q$-connection w.r.t.~the framing $\square$ is canonically independent of the choice of $\square$.
\end{conjecture}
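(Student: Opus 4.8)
The plan is to reduce Conjecture~\ref{conjmodule} to the statement of Conjecture~\ref{ConjA} (the coordinate-independence of $q\!\op-\!\Omega_R$ as an $E_\infty$-$\bb Z[[q-1]]$-algebra), together with a Tannakian or descent-style reconstruction of the category of modules with $q$-connection from cohomological data. Concretely, first I would observe that a finite projective $R[[q-1]]$-module $M$ with flat $q$-connection w.r.t.\ $\square$ gives rise to a twisted $q$-de~Rham complex $q\!\op-\!\Omega^\bullet_{R[[q-1]]/\bb Z[[q-1]],\square}(M)$, obtained by the same formulas as $q\!\op-\!\Omega^\bullet_{R[[q-1]]/\bb Z[[q-1]],\square}$ but with coefficients twisted by $M$; this is a module over the $E_\infty$-algebra $q\!\op-\!\Omega_R^\square$. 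The category of modules with $q$-connection should be recovered as a full subcategory of (perfect) modules over $q\!\op-\!\Omega_R$ cut out by suitable finiteness and ``$t$-structure'' conditions (finite projectivity of the underlying $R[[q-1]]$-module after forgetting along the stupid truncation $q\!\op-\!\Omega_R\to R[[q-1]]$). Since $q\!\op-\!\Omega_R$ is canonical by Conjecture~\ref{ConjA}, and the forgetful functor along the stupid truncation can be checked to be independent of $\square$ up to coherent homotopy (the stupid truncation itself is $\square$-dependent, but the induced functor on module categories is not, because any two framings are intertwined by the canonical quasi-isomorphism), the subcategory so defined is canonical, and one identifies it with modules with $q$-connection by the explicit $q$-de~Rham functor.

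The key steps, in order, would be: (1) set up the twisted $q$-de~Rham complex and verify it depends functorially on $(M,\nabla_q)$ and defines a module over $q\!\op-\!\Omega_R^\square$; (2) show that the assignment $(M,\nabla_q)\mapsto q\!\op-\!\Omega^\bullet(M)$ is fully faithful onto its essential image, for \emph{fixed} $\square$ — this is the local, coordinate-dependent statement and should follow from a direct computation with the Koszul-type description of the complex, using that $\gamma_i - 1$ is topologically nilpotent modulo $q-1$ so that $M$ can be recovered as $H^0$ of an endomorphism complex; (3) characterize the essential image intrinsically, without reference to $\square$, as those perfect $q\!\op-\!\Omega_R$-modules whose restriction along (any, hence every) stupid truncation to $R[[q-1]]$ is finite projective and concentrated in degree $0$; (4) invoke Conjecture~\ref{ConjA} to conclude that this intrinsic description, and hence the category, together with its symmetric monoidal structure (which on the $E_\infty$-module side is the relative tensor product over $q\!\op-\!\Omega_R$, matching André's monoidal structure by \cite{andreq}), is independent of $\square$.

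I expect step~(3) to be the main obstacle. The difficulty is that the stupid truncation $q\!\op-\!\Omega_R^\square \to R[[q-1]]$ genuinely depends on $\square$, so one must argue that although the \emph{map} varies, the pair (module category of $q\!\op-\!\Omega_R$, forgetful functor to $R[[q-1]]$-modules) does not. One approach is to note that the fibre of the stupid truncation is, coordinate-independently, the ``positive-degree part'', and that the canonical quasi-isomorphisms of Conjecture~\ref{ConjA} are filtered for the Hodge-type (stupid) filtration up to homotopy — but establishing this filtered compatibility is essentially as hard as a functorial refinement of Conjecture~\ref{ConjA} itself, and may require the full strength of an $A_\inf$-style construction as in \cite{BMS} (cf.\ the $L\eta$-operation, which is filtered in a suitable sense). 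A second, softer approach is to bypass the truncation: characterize modules with $q$-connection purely in terms of the $\gamma_a$- and $\phi_p$-actions of Section~\ref{sec:operations} (a ``Wach module'' condition, $q$-locally), which are conjecturally canonical; but this trades one conjecture for others. Either way, the heart of the matter is upgrading the derived-category-level canonicity of Conjecture~\ref{ConjA} to something that sees the abelian category of $q$-connections, and I would flag this as the step where a genuinely new idea — presumably a site or a de~Rham--Witt-type construction, currently unavailable in this setting — seems to be needed.
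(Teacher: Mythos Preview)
This statement is a \emph{conjecture} in the paper; there is no proof in the paper to compare against. The paper explicitly says that what is missing is an analogue of the crystalline site, and leaves the conjecture open.

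Your proposed strategy has a genuine gap at step~(2). You want the $q$-de~Rham functor
\[
(M,\nabla_q)\longmapsto q\!\op-\!\Omega^\bullet_{R[[q-1]]/\bb Z[[q-1]],\square}(M)
\]
to be fully faithful into modules over the $E_\infty$-algebra $q\!\op-\!\Omega_R$, so that the category of $q$-connections can be recovered as a full subcategory of $q\!\op-\!\Omega_R$-modules. But the paper explicitly warns (in the remark immediately preceding Conjecture~\ref{conjmodule}) that this functor is \emph{not} fully faithful, and that the same failure already occurs classically for modules with integrable connection and modules over $\Omega^\bullet_R$. So the reconstruction-from-$E_\infty$-modules approach cannot work as stated: knowing $q\!\op-\!\Omega_R$ as an $E_\infty$-algebra, even together with Conjecture~\ref{ConjA}, does not determine the abelian category of $q$-connections. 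Your own step~(3) inherits the same problem, since without full faithfulness there is nothing to characterize as an essential image.

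There is a second, independent warning sign for step~(3): the paper says it expects the equivalence of Conjecture~\ref{conjmodule} to \emph{change the underlying $R[[q-1]]$-module}. Thus any intrinsic characterization via ``restriction along the stupid truncation is finite projective in degree $0$'' is suspect, because the underlying module is precisely what is not preserved under change of framing. Your closing paragraph correctly identifies that a new structure (a site, or a de~Rham--Witt-type object) is likely needed; the point is that this is not merely the hardest step of your outline, it is the reason the outline as written cannot succeed.
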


We warn the reader that we expect this equivalence to change the underlying $R[[q-1]]$-module of the $q$-connection. The situation is similar to the situation of crystals on smooth varieties in characteristic $p$: Once one chooses a deformation of the variety to $\bb Z_p$ (as is automatic by a choice of \'etale map to affine space), the category of crystals can be described as the category of modules with integrable connection on the deformation. By the theory of the crystalline site, this category is independent of the choice of coordinates. What we are missing in the $q$-de~Rham context is an analogue of the crystalline site.

\begin{remark} As in Section~\ref{sec:operations}, there should be further compatible $\phi_p$ and $\gamma_a$-actions on all objects considered in this section. It would be interesting to systematically study the resulting categories of $R[[q-1]]$-modules with $q$-connections and such actions as a category in its own right. For example, by the theory of relative $(\varphi,\Gamma)$-modules, it follows that for any $p$, it should admit a canonical functor to the category of $\bb Z_p$-local systems on $\Spec \hat{R}[\frac 1p]$, where $\hat{R}$ is the $p$-adic completion of $R$.
\end{remark}

\section{Examples}\label{sec:examples}

This section is unreasonably short, as the author is unable to compute any interesting example. It is not known to the author how to identify the $q$-de~Rham complexes $q\!\op-\!\Omega^\bullet_{R[[q-1]]/\bb Z[[q-1]],\square_1}$ and $q\!\op-\!\Omega^\bullet_{R[[q-1]]/\bb Z[[q-1]],\square_2}$ in the simplest case $R=\bb Z[T]$, with the two different framings $\square_1: \bb Z[T_1]\to \bb Z[T]$ sending $T_1$ to $T$, and $\square_2: \bb Z[T_2]\to \bb Z[T]$ sending $T_2$ to $T+1$. Note that the results of \cite{BMS} prove this independence after the completed base change along $\bb Z[[q-1]]\to A_\inf$, but it seems hard to make the proof explicit.

Roughly, in this example, in order to make the proof explicit, one would have to look at the integral closure of the algebra $\roi[T^{1/p^\infty},(1+T)^{1/p^\infty}]$ in $\bb C_p[T^{1/p^\infty},(1+T)^{1/p^\infty}]$, where $\roi$ is the ring of integers of $\bb C_p$.\footnote{To be in the setup of \cite{BMS}, one should also invert $T$ and $1+T$.} The almost purity theorem, Theorem~\ref{thmalmostpurity}, guarantees that this integral closure is significantly bigger than $\roi[T^{1/p^\infty},(1+T)^{1/p^\infty}]$; for example, it contains the element
\[
\frac{(1+T)^{1/p}-T^{1/p}-1}{p^{1/p}}\ ,
\]
as its $p$-th power can be checked to lie in $\roi[T^{1/p^\infty},(1+T)^{1/p^\infty}]$. It should be possible to write down the whole integral closure, and then compute the Galois cohomology of $\bb Z_p^2$ acting on it (via acting on the $p$-power roots $T^{1/p^n}$ and $(1+T)^{1/p^n}$). Applying $L\eta_{q-1}$ to the result would give the complex that admits natural quasi-isomorphisms from both $q$-de~Rham complexes.

One can do a computation for $\bb P^1_{\bb Z}$, or any smooth toric variety. For this, one notes that in the definition of the $q$-de~Rham complex, the only ingredient that depended on the choice of coordinates was the automorphism ``multiplication by $q$ in $i$-th coordinate''. Given a global action by a torus, these automorphisms can be defined globally, and one can expect that this gives the correct global $q$-de~Rham complex.

For $\bb P^1_{\bb Z}$, we get a global $q$-de~Rham complex
\[
q\!\op-\!\Omega^\bullet_{\bb P^1_{\bb Z[[q-1]]}/\bb Z[[q-1]]} = \roi_{\bb P^1_{\bb Z}}[[q-1]]\buildrel{\nabla_q}\over\to \Omega^1_{\bb P^1_{\bb Z}/\bb Z}[[q-1]]\ ,
\]
where $\nabla_q(f) = \frac{\gamma(f)-f}{q-1} d\log(\frac{x_0}{x_1})$, where $(x_0:x_1)$ are the coordinates on $\bb P^1$, and $\gamma$ is the automorphism of $\bb P^1_{\bb Z[[q-1]]}$ taking $(x_0:x_1)$ to $(qx_0:x_1)$. It follows that
\[
H^2_{q\!\op-\!\dR}(\bb P^1_{\bb Z}/\bb Z) = H^1(\bb P^1_{\bb Z},\Omega^1_{\bb P^1_{\bb Z}/\bb Z}[[q-1]])\cong \bb Z[[q-1]]\ .
\]
Via the usual Cech cover of $\bb P^1$, a generator of $H^1$ comes from the section $\omega=d\log(\frac{x_0}{x_1})$ of $\Omega^1_{\bb G_m/\bb Z}[[q-1]]$. One can compute the action of $\phi_p$, $\gamma_a$ on this class, resulting in the formulas given in Example~\ref{ex:tatetwist} above.

It would be interesting to compute $q$-de~Rham cohomology for elliptic curves. This could either be done for a single elliptic curve over an \'etale $\bb Z$-algebra, or for a family of elliptic curves, such as the Legendre family
\[
y^2 = x(x-1)(x-\lambda)\ ,\ \lambda\neq 0,1
\]
over $\bb P^1_{\bb Z}\setminus \{0,1,\infty\}$ (with its canonical framing). Note that in the latter case, the Gau\ss--Manin connection on de~Rham cohomology can be written down in a preferred basis, giving rise to the Picard--Fuchs equation. If there is a preferred element in the $q$-de~Rham cohomology of the universal elliptic curve, one could write down the $q$-difference equation that it satisfies, leading to a $q$-deformation of the Picard--Fuchs equation. Recall that the Picard--Fuchs equation is a particular kind of hypergeometric equation, and those admit natural $q$-deformations known as $q$-hypergeometric equations, studied for example in~\cite{andreq}. One may wonder whether there is a relation between these objects.

We warn the reader that if one knows the answer for the universal elliptic curve, it does not immediately give the answer for any individual elliptic curve: Although the category of modules with $q$-connection should be functorial in the base ring $R$, only certain functorialities are easy to write down in coordinates. For example, if $R=\bb Z[T^{\pm 1}]$ with its standard frame, then only the base change along $T=1$ should be equal to the naive base change $M\mapsto M\otimes_{\bb Z[T^{\pm 1}][[q-1]]} \bb Z[[q-1]]$. Note that for the ring $R=\bb Z[\lambda^{\pm 1},(1-\lambda)^{-1}]$ relevant for the Legendre family, even this base change is excluded.

The reader will have observed that whenever we could actually compute anything, it was not necessary to invoke power series in $q-1$.\footnote{In order to write down the $q$-de~Rham complex in coordinates for a general \'etale framing map, it was necessary to use power series in $q-1$, but this is a priori not saying anything about the underlying canonical object.} We do not know whether this is a reflection of not having computed sufficiently interesting objects, or of a more precise theory of $q$-deformations which works over a smaller ring than $\bb Z[[q-1]]$.

\section{Variants}\label{sec:variants}

In this paper, we worked under quite strong assumptions, in that we always assumed the base ring to be $\bb Z$, and the algebras to be smooth. We keep the latter assumption as otherwise already the de~Rham complex is problematic; of course, one may wonder about ``derived $q$-de~Rham cohomology'' in the singular case.

Note that for the definition of the $q$-de~Rham complexes, the assumption $A=\bb Z$ was unnecessary: Let $A$ be any ring, and $R$ a smooth $A$-algebra, with an \'etale map $\square: A[T_1,\ldots,T_d]\to R$. Then one can define $q\!\op-\!\Omega^\bullet_{R[[q-1]]/A[[q-1]],\square}$ exactly as before. Let us discuss whether this should be independent of $\square$ up to quasi-isomorphism in several examples.

\begin{example}\label{ex:variants}
\begin{altenumerate}
\item[{\rm (i)}] The algebra $A$ is \'etale over $\bb Z$. In this case, the complex should be independent of $\square$. In fact,
\[
q\!\op-\!\Omega^\bullet_{R[[q-1]]/A[[q-1]],\square} = q\!\op-\!\Omega^\bullet_{R[[q-1]]/\bb Z[[q-1]],\square^\prime}\ ,
\]
where $\square^\prime: \bb Z[T_1,\ldots,T_d]\to A[T_1,\ldots,T_d]\to R$ is the induced map, which is still \'etale. As the right side should be independent of $\square^\prime$ (in the derived category), the left side should be so, too.
\item[{\rm (ii)}] The algebra $A$ is smooth over $\bb Z$. In this case, the complex should be independent of $\square$ (at least if $\Spec A$ is small enough), but there should be different ways of making it independent of $\square$. More precisely, Conjecture~\ref{conjsmooth} says that whenever one fixes a framing $\square_A: \bb Z[T_1,\ldots,T_d]\to A$, there is a canonical complex $q\!\op-\!\Omega_{R/A,\square_A}$, which can in coordinates be computed by $q\!\op-\!\Omega^\bullet_{R[[q-1]]/A[[q-1]],\square}$.\footnote{This may seem confusing, but a similar phenomenon happens for crystalline cohomology of smooth $\bb F_p$-algebras.}
\item[{\rm (iii)}] The ring $A$ is a $\bb Q$-algebra. In this case, the $q$-de~Rham complex is independent of $\square$, as it is quasi-isomorphic to the constant extension of the de~Rham complex. Thus, as indicated above, the theory is only interesting over mixed characteristic rings.
\item[{\rm (iv)}] The ring $A$ is $\bb F_p$. In this case, we do not expect that the complex is canonically independent of $\square$. Note that any smooth $\bb F_p$-algebra can be lifted to a smooth $\bb Z_p$-algebra, and any two framings can be lifted as well. The two resulting $q$-de Rham complexes over $\bb Z_p$ should be canonically quasi-isomorphic; reduction modulo $p$ produces a quasi-isomorphism between the two $q$-de~Rham complexes over $\bb F_p$. However, we expect that this quasi-isomorphism depends on the choices made. Namely, recall that $q$-de~Rham cohomology becomes isomorphic to singular cohomology over $\bb Z((q-1))$; in particular, over $\bb F_p((q-1))$, it becomes isomorphic to \'etale cohomology with $\bb F_p$-coefficients. If the $q$-de~Rham complex exists canonically for $\bb F_p$-algebras (in a way compatible with the theory over $\bb Z$), this base change to $\bb F_p((q-1))$ would depend only on the fiber of $X$ over $\bb F_p$; however, one can show by examples that \'etale cohomology with $\bb F_p$-coefficients does not depend only on the fiber of $X$ over $\bb F_p$.
\item[{\rm (v)}] Let $A$ be the ring of integers in a finite extension $K$ of $\bb Q_p$. If the extension is unramified, we expect independence; this is basically a special case of (i). In general, we do not expect independence. It is an interesting question how the $q$-de~Rham complex can be extended to places of bad reduction. We note that the theory of \cite{BMS} actually works in the ramified setting. In abstract $p$-adic Hodge theory, the $q$-de~Rham complex is closely related to the theory of Wach modules, \cite{Wach}, \cite{BergerWach}, which only works for abelian extensions of $\bb Q_p$. For general extensions of $\bb Q_p$, there is the theory of Breuil--Kisin modules, \cite{breuil}, \cite{kisin}, which depends on a choice of uniformizer of $K$. One might expect that there is a Breuil--Kisin variant of the $q$-de~Rham complex; however, this seems harder to write down explicitly, as the variable $q$ is tied with the roots of unity. One may also wonder about Lubin--Tate variants.
\end{altenumerate}
\end{example}

\bibliographystyle{acm}
\bibliography{Toulouse}

\end{document}